\documentclass[11pt]{amsart}

\RequirePackage[l2tabu,orthodox]{nag}
\usepackage[T1]{fontenc}
\usepackage[utf8]{inputenc}
\usepackage{lmodern}
\usepackage[english]{babel}
\usepackage{color} 
\usepackage[usenames,dvipsnames]{xcolor}
\usepackage[alphabetic]{amsrefs}
\usepackage{amsthm,amssymb,amsmath}
\usepackage[pdftex]{graphicx}
\usepackage{enumerate}
\usepackage[inline]{enumitem}
\usepackage{url}
\usepackage{stmaryrd}

\usepackage{multicol}
\usepackage{mathrsfs}

\usepackage[foot]{amsaddr}

\numberwithin{equation}{section}

\let\oldtocsection=\tocsection

\let\oldtocsubsection=\tocsubsection 

\let\oldtocsubsubsection=\tocsubsubsection

\renewcommand{\tocsection}[2]{\hspace{0em}\oldtocsection{#1}{#2}}
\renewcommand{\tocsubsection}[2]{\hspace{2em}\oldtocsubsection{#1}{#2}}
\renewcommand{\tocsubsubsection}[2]{\hspace{4em}\oldtocsubsubsection{#1}{#2}}

\newtheorem{thm}{Theorem}[section]
\newtheorem{introthm}{Theorem}

\newtheorem*{claim}{Claim}

\newtheorem{claimnum}{Claim}

\newtheorem{lem}[thm]{Lemma}
\newtheorem{prop}[thm]{Proposition}
\newtheorem{cor}[thm]{Corollary}
\newtheorem{intro-cor}[introthm]{Corollary}

\theoremstyle{definition}
\newtheorem{dfn}[thm]{Definition}

\newtheorem*{main question}{Main Question}

\theoremstyle{remark}
\newtheorem{rem}[thm]{Remark}
\newtheorem{ex}[thm]{Example}

\usepackage[margin=3cm]{geometry}

\setlength{\parskip}{0.3ex plus 0.2ex minus 0.2ex}
\usepackage{indentfirst}
\usepackage{microtype}

\usepackage[colorlinks=true,linkcolor=blue,citecolor=magenta]{hyperref}

\usepackage{blindtext}

\usepackage{nameref}

\DeclareMathOperator{\Aut}{\mathsf{Aut}}

\DeclareMathOperator{\Fix}{\mathsf{Fix}}
\DeclareMathOperator{\homeo}{\mathsf{Homeo}}
\DeclareMathOperator{\Diff}{\mathsf{Diff}}
\DeclareMathOperator{\Bij}{\mathsf{Sym}}
\DeclareMathOperator{\Fit}{\mathsf{Fit}}
\DeclareMathOperator{\vf}{\mathsf{vf}}

\DeclareMathOperator{\stab}{\mathsf{Stab}}

\DeclareMathOperator{\GL}{\mathsf{GL}}
\DeclareMathOperator{\hor}{\mathfrak{h}}
\newcommand{\wrwr}{\operatorname{\wr\wr}}

\newcommand{\fix}{\operatorname{\mathsf{Fix}}}
\newcommand{\fixphi}{\operatorname{\mathsf{Fix}}^\varphi}
\DeclareMathOperator{\Aff}{\mathsf{Aff}}

\newcommand{\id}{\mathsf{id}}
\newcommand{\R}{\mathbb{R}}
\newcommand{\Q}{\mathbb Q}
\newcommand{\N}{\mathbb N}
\newcommand{\Z}{\mathbb Z}

\newcommand{\Tbb}{\mathbb T}

\newcommand{\Lcal}{\mathcal{L}}
\renewcommand{\setminus}{\smallsetminus}

\DeclareMathOperator{\BS}{\mathsf{BS}}
\DeclareMathOperator{\Br}{\mathsf{Br}}
\DeclareMathOperator{\Homirr}{\mathsf{Hom}_{\mathrm{irr}}}

\DeclareMathOperator{\Int}{\mathsf{Int}}
\renewcommand{\ker}{\operatorname{\mathsf{ker}}}

\renewcommand{\emptyset}{\varnothing}
\newcommand{\treeorder}{\triangleleft}

\newcommand{\treeorderop}{\triangleright}
\newcommand{\treeordereq}{\trianglelefteq}
\newcommand{\treeup}{\curlywedgeuparrow}

\DeclareMathOperator{\supp}{\mathsf{Supp}}

\DeclareMathOperator{\suppphi}{\mathsf{Supp}^\varphi}

\title{Solvable groups and affine actions on the line}
\author[Brum \and Matte Bon \and Rivas \and Triestino]{Joaqu\'in Brum \and Nicol\'as Matte Bon \and Crist\'obal Rivas \and Michele Triestino}
\date{\today}
\keywords{Group actions on the real line, solvable groups, actions on real trees, $C^1$ actions}

\begin{document}
\begin{abstract}
	We prove a structural result for orientation-preserving actions  of finitely generated solvable groups on real intervals, considered up to semi-conjugacy. As applications we obtain new answers to a problem first considered by J. F. Plante, which asks  under which conditions  an action of a solvable group on a real interval is semi-conjugate to an action on the line by affine transformations. We show that this is always the case for actions by $C^1$ diffeomorphisms on closed intervals. For arbitrary actions by homeomorphisms, for which this result is no longer true (as shown by Plante),  we show that a semi-conjugacy to an affine action still exists in a local sense, at the level of germs near the endpoints. Finally for a vast class of solvable groups, including all solvable linear groups, we show that the family of affine actions on the line is robust, in the sense that any  action by homeomorphisms on the line which is sufficiently close to an affine action must be semi-conjugate to an affine action. This robustness fails for general solvable groups, as illustrated by  a counterexample. 
	 
	\smallskip
	
	{\noindent\footnotesize \textbf{MSC\textup{2020}:} Primary 37C85, 20F16, 20E08, 20F60, 57M60. Secondary 37E05, 37B05.}

\end{abstract}

\maketitle

\section{Introduction}
\subsection{Background}
This work is about the study of group actions on one-manifolds. This means, given a group $G$, to try to understand and classify the possible  representations $\varphi\colon G \to \homeo_0(X)$ into the group of orientation-preserving homeomorphisms, or into the group $\Diff^r_0(X)$ of diffeomorphisms of regularity $C^r$ where $X$ is a connected one-manifold (that is, the circle or a real interval).  With no real loss of generality we restrict to actions which have no global fixed points in the interior of $X$, which will be termed \emph{irreducible} (indeed, an action  with global fixed points can be decomposed as a union of irreducible actions on a countable family of invariant intervals).  As usual in dynamics,  actions can be considered up to (topological) conjugacy; however, in the one-dimensional setting it is always possible to obtain many non-conjugate actions from a given one by blowing up orbits as in the classical Denjoy's example. This redundancy can be avoided by considering a weaker equivalence relation, called \emph{semi-conjugacy}, whose definition can be traced back to Ghys \cite{ghysbounded}, although it has been unanimously formalized only recently (see for instance the monograph by Kim, Koberda, and Mj \cite{KKMj}).  When $X, Y$ are non-empty open intervals,  two irreducible actions $\varphi\colon G\to \homeo_0(X)$ and $\psi \colon G \to \homeo_0(Y)$ are semi-conjugate if there is a monotone $G$-equivariant map $h\colon X\to Y$.  We warn the reader that this semi-conjugacy notion is slightly different from the homonymous notion appearing in classical dynamical systems: notably, this is an equivalence relation, which roughly speaking identifies two actions if they have the same behavior on their  minimal closed invariant subsets. (We refer to  \S \ref{sc.preliminary-actions} for more details.) 

The case of abelian groups is closely related to the classical theory of circle homeomorphisms initiated by Poincaré: every irreducible action of a finitely generated abelian group on the line is semi-conjugate to an action by translations. Stronger rigidity holds for actions of higher regularity: for $C^2$ actions, the results of  Denjoy \cite{Denjoy} and Kopell \cite{Kopell}, combined with a theorem of H\"older \cite{Holder}, together imply that an abelian subgroup of $\Diff^2_0([0, 1])$ whose action is irreducible, is topologically conjugate (in restriction to $(0, 1)$) to a group of translations of $\R$. These results are a starting point for many other rigidity results for group actions in $C^r$ regularity, with $r>1$. However, as these are typically based on distortion estimates, they usually fail in $C^1$ regularity, as one can see from the examples of Tsuboi \cite{Tsuboi-Kopell}, generalizing the classical constructions of Denjoy \cite{Denjoy} and Pixton \cite{Pixton}. 

The situation for nilpotent groups is completely analogous to the abelian case.  Plante showed that every irreducible action $\varphi\colon G\to \homeo_0(\R)$ of a finitely generated nilpotent group is semi-conjugate to an action by translations \cite{PlanteMeasure} (this holds more generally  for groups $G$ of subexponential growth, although the existence of non-virtually nilpotent such groups was unknown at the time). Plante and Thurston showed (building on the case of abelian groups) that every nilpotent subgroup of $\Diff_0^2([0, 1])$ is abelian  \cite{Plante-Thurston}; in particular, it is topologically conjugate to a group of translations, as soon as its action on the interval is irreducible.  This is far from being true for $C^1$ actions. Indeed, Farb and Franks \cite{Farb-Franks} showed that every torsion-free finitely generated nilpotent group embeds in  $\Diff^1_0([0, 1])$ (see also the results of Jorquera \cite{Jorquera} and Castro, Jorquera, and Navas \cite{CJN}). Furthermore, Parkhe's result from \cite{Parkhe} implies that every $C^0$ action of a finitely generated nilpotent group on $[0,1]$ can be conjugated into a $C^1$ action.

The next natural class  to consider is that of finitely generated solvable groups, and this is the main focus of this paper. The study of this case  goes back to a seminal paper of Plante \cite{Plante}. A basic source of examples of solvable groups acting on one-manifolds are subgroups of the affine group
\[\Aff(\R)=\{x\mapsto ax+b:  a>0, b\in \R\}.\]
The main problem considered by Plante in \cite{Plante} is the following: under which conditions must an action of a solvable group $\varphi\colon G\to \homeo_0(\R)$ be semi-conjugate to an action  by affine transformations $\psi\colon G\to\Aff(\R)$?

Plante provided an array of sufficient  conditions. He deduced in particular that when $G$ is a finitely generated polycyclic group, every irreducible action $\varphi\colon G \to \homeo_0(\R)$ is semi-conjugate to an affine action.  In contrast, he showed that this fails for general solvable groups, as he constructed a faithful minimal action of the group $\Z \wr \Z$ on the line which is not semi-conjugate to any affine action, see \cite[\S 5]{Plante}.  We recover Plante's construction from a different method that we explain in \S \ref{sec ejemplo lamp lighter}, and call the resulting actions \emph{Plante-like actions} of $\Z\wr\Z$. The idea behind our approach is quite robust and can be easily generalized in multiple ways (see for instance our previous work \cite[Example  8.1.8]{BMRT}). As a consequence, it turns out that the possible actions of general solvable groups on the real line can be wilder than expected. Since Plante's work, a number of papers have addressed the study of solvable group actions on one-manifolds, and the special  role played by affine actions  can be recognized as the main theme, see for instance works of Plante \cite{PlanteContinuous}, Cantwell and Conlon \cite{CC}, Navas \cite{NavasResolubles,NavasAmenable},   Akhmedov \cite{Akhmedov_Z,Akhmedov}, Bonatti, Monteverde, Navas and the third author \cite{BMNR}, Guelman and the third author \cite{GuelmanRivas} (see also work by Chiswell and Kropholler \cite{ChiswellKropholler} and by the third author and Tessera \cite{RivasTessera} for tightly related results on left orders on solvable groups).  
 
\subsection{Results}
In this paper we develop a new approach to study general actions of solvable groups by homeomorphisms of the real line; we actually work under the slightly more general assumption that $G$ is virtually solvable, i.e.\ contains a solvable subgroup of finite index. As an application, we obtain three  results which provide new answers to the problem originally considered by Plante, by relating arbitrary actions to affine actions.

Our first result deals with actions on intervals by diffeomorphisms of class $C^1$. Here and elsewhere, we say for short that an affine action $\psi \colon G\to \Aff(\R)$ is \emph{abelian} if its image is abelian.

\begin{introthm}[$C^1$ actions of solvable groups]\label{mthm.C1}
	Let $\varphi\colon G\to \Diff^1_0([0, 1])$ be an irreducible action of a finitely generated virtually solvable group. Then the restriction of $\varphi$ to $(0, 1)$ is semi-conjugate to an affine action $\psi\colon G\to \Aff(\R)$. Moreover,  the semi-conjugacy is a topological conjugacy, provided $\psi$ is non-abelian. 
	\end{introthm}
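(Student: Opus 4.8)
The plan is to run the paper's structural classification of solvable actions and then use the extra rigidity coming from the derivative cocycle at the endpoints, which is available precisely because the action is by $C^1$ diffeomorphisms of the \emph{closed} interval. First I would reduce to the case that $G$ itself is solvable: choose a finite-index solvable subgroup $H\trianglelefteq G$, prove the statement for $\varphi|_H$, and then promote the resulting affine semi-conjugacy back to $G$, using that $H$ and $G$ share the same minimal invariant set and the same projective class of transverse data, so that uniqueness of the semi-conjugacy to the minimal model forces $G$-equivariance. Since every $\varphi(g)$ is an orientation-preserving $C^1$ diffeomorphism fixing both endpoints, the logarithmic derivatives
\[
D_0(g)=\log\varphi(g)'(0),\qquad D_1(g)=\log\varphi(g)'(1)
\]
are well-defined homomorphisms $G\to(\R,+)$; their (non)triviality is the key $C^1$ datum, detecting whether genuine hyperbolicity is present at an endpoint.

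\textbf{Dichotomy and the affine model.} I would split according to whether the germ data is degenerate. If $D_0=D_1=0$, there is no hyperbolicity at the ends, but Thurston's stability theorem applied to the germ at each endpoint still produces nontrivial homomorphisms $G\to\R$; the goal then is to show, via the structural classification together with the $C^1$ hypothesis, that the action is semi-conjugate to a translation (abelian affine) action. If instead $D_0\neq 0$ or $D_1\neq 0$, the corresponding contraction is the source of non-abelian affine dynamics, and I would invoke the structural theorem to build the monotone equivariant map $h\colon(0,1)\to\R$ onto a non-abelian affine model. \textbf{The hard part will be exactly here}: the very same solvable groups admit wild, Plante-like actions in $C^0$ regularity, so one must show that the $C^1$ hypothesis excludes the tree-like / infinitely-layered normal forms and leaves only the affine one. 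This is where $C^1$ regularity has to be exploited globally rather than just at the endpoints, and I expect the obstruction to the spurious ``levels'' of a non-affine model to come from Kopell-type commutation constraints on $C^1$ centralizers, feeding the derivative homomorphisms $D_0,D_1$ back into the combinatorics supplied by the structural theorem.

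\textbf{Upgrading to a conjugacy.} When $\psi$ is non-abelian its action on $\R$ is minimal, so $h$ is automatically surjective and the only thing to rule out is that $h$ collapses some interval. Non-abelianity forces $D_0$ or $D_1$ to be nontrivial, so there is an element $g$ with a hyperbolic fixed point at an endpoint, say $\varphi(g)'(0)=a<1$. The germ of the whole $G$-action at this hyperbolic endpoint should be $C^1$-linearizable in an equivariant fashion (a local affine chart, in the spirit of the germ-level statement the paper establishes separately), giving a local homeomorphism that realizes $h$ near $0$ and is therefore injective there. By equivariance $h$ is injective on the $G$-orbit of this neighborhood, which is dense by minimality of $\psi$; a monotone map that is injective on a dense open set is injective, so $h$ is a homeomorphism.

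\textbf{Why the hypothesis is sharp.} The split between semi-conjugacy and genuine conjugacy mirrors the $C^1$ Denjoy--Tsuboi phenomenon cited above: in the abelian (translation) regime exceptional minimal sets can persist in $C^1$ regularity, so one can only hope for a semi-conjugacy, whereas the non-abelian regime carries true hyperbolicity that linearizes near fixed points and destroys wandering intervals. Accordingly I expect the two genuinely $C^1$-specific inputs to be Thurston stability (to handle the degenerate-germ case and to feed the structural theorem) and local linearization at hyperbolic fixed points (to upgrade to a conjugacy), with the central difficulty concentrated in the global step of certifying that the structural model is affine rather than Plante-like.
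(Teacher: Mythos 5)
Your outline correctly identifies where the difficulty lies, but the mechanism you propose for resolving it is the one the paper explicitly rules out. You write that the obstruction to the ``tree-like / infinitely-layered normal forms'' should come from Kopell-type commutation constraints on $C^1$ centralizers; however, Kopell's lemma and its relatives are $C^2$ (bounded-variation) statements, and the paper stresses that these distortion estimates fail in $C^1$ regularity (Tsuboi's examples), to the point that they ``do not even suffice to exclude that Plante-like actions of $\Z\wr\Z$ might be conjugate to any $C^1$ action.'' So the heart of the proof is left unproved in your proposal, and the tool you name for it would not work. What the paper actually does is: (i) use the structure theorem to show that any minimal laminar action of a finitely generated virtually solvable group contains a $\Z\wr\Z$ subgroup acting in a Plante-like way (Proposition \ref{p-plante-ubiquitous}, which combines the existence of a pseudo-homothety with an abelian normal subgroup to build the wreath relations); and (ii) prove directly that a Plante-like action of $\Z\wr\Z$ is never semi-conjugate to a $C^1$ action on $[0,1]$ (Proposition \ref{p-plante-C1}). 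Step (ii) is a quantitative counting argument in the style of Bonatti--Monteverde--Navas--Rivas: after the Muller--Tsuboi normalization making all derivatives equal to $1$ at the endpoints, one produces, for each $n$, a family of $N^n$ pairwise disjoint images of a fixed interval (disjointness coming from the combinatorics of the lamination, Lemma \ref{l-disjoint-intervals}), each of length at least $C(1-\varepsilon)^{n(N+2)}\lambda^n$; choosing $N\lambda>1$ and $\varepsilon$ small makes the total length exceed $1$. Nothing resembling Thurston stability or the endpoint derivative homomorphisms $D_0,D_1$ enters the argument --- indeed the normalization kills those homomorphisms at the outset.

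Two smaller points. First, your case split on $D_0,D_1$ does not match the actual logic: the affine model $\psi$ is produced by the purely topological structure theorem (Theorem \ref{t-main}) applied to the semi-conjugacy class of $\varphi|_{(0,1)}$, not read off from endpoint derivatives, and the $C^1$ hypothesis is used only negatively, to exclude the laminar alternative. Second, for the upgrade to a conjugacy in the non-abelian case, the paper simply invokes the result of \cite{BMNR} (Lemma \ref{l-upgrade-to-conjugacy}): the presence in $\psi(G)$ of a homothety and a nontrivial translation forces a $C^1$ action semi-conjugate to $\psi$ to be minimal on $(0,1)$. Your route via equivariant $C^1$ linearization of the germ at a hyperbolic endpoint is a much stronger statement than what is needed and is not justified as stated; establishing it would essentially amount to re-proving the conjugacy claim.
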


\begin{rem}
	The main contribution of the theorem is the existence of the semi-conjugacy. When  $\psi$ is a non-abelian affine action, any $C^1$ action semi-conjugate to $\psi$ is automatically conjugate  to $\psi$ by a result of  Bonatti, Monteverde, Navas, and the third author \cite[\S 4.2]{BMNR}.
\end{rem} 

\begin{rem}
It should be stressed that Theorem \ref{mthm.C1} does not imply that every solvable subgroup of $\Diff^1_0([0, 1])$ is metabelian,  since a semi-conjugacy might not preserve faithfulness of an action; in fact, it is easy to build solvable subgroups of $\Diff^\infty_0([0, 1])$ of arbitrarily large solvability degree by considering iterated wreath products of $\Z$, see Navas \cite{NavasResolubles} and \S \ref{s-cyclic-lamplighter}. However, if $G$ is a finitely generated solvable subgroup of $\Diff^1_0([0, 1])$, its action on any minimal invariant subset $\Lambda\subset (0, 1)$ factors through a metabelian quotient.\end{rem}

Under the stronger assumption that actions be of class $C^2$, the above result was proved by Navas in \cite{NavasResolubles,NavasAmenable}, who also obtained an algebraic description of virtually  solvable subgroups of $\Diff^2_0([0, 1])$. Navas's approach is based essentially on the strong restriction on abelian groups of $C^2$ diffeomorphisms described above (in particular on Kopell's lemma). As also mentioned above, these obstructions for abelian subgroups fail in  $C^1$ regularity, and this approach does not even suffice to exclude that Plante-like actions of $\Z \wr \Z$ might be conjugate to any $C^1$ action. Our proof of Theorem \ref{mthm.C1} relies on  more global properties of the acting group. Roughly, it proceeds by first  reducing to the case of a Plante-like action (as a consequence of our analysis of general  $C^0$ actions of solvable groups, see Proposition \ref{p-plante-ubiquitous}), and then ruling out the possibility that the latter is semi-conjugate to any $C^1$ action (see  Proposition \ref{p-plante-C1}, which is inspired by another result from \cite{BMNR}). 

In the $C^0$ setting, our next result shows that the answer to Plante's problem is always affirmative in a \emph{local} sense,  namely  a semi-conjugacy to an affine action can be found at the level of germs near fixed points.

\begin{introthm}[Local semi-conjugacy near fixed points]\label{mthm.affine_trace}
	Let $\varphi\colon G\to \homeo_0(\R)$ be an irreducible  action of a finitely generated virtually solvable group.
	Then there exist an irreducible affine action $\psi\colon G\to \Aff(\R)$, an interval $I$ of the form $(a,+\infty)$, and a non-decreasing map $h\colon I\to \R$ with $\lim_{x\to +\infty} h(x)=+\infty$,  such that for every $g\in G$ we have
	\[\psi(g)(h(x))= h(\varphi(g)(x))\]
	for  any sufficiently large $x\in \R$. This action $\psi$ is unique up to affine conjugacy.
\end{introthm}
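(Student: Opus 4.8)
The plan is to deduce the statement from the existence of a \emph{quasi-invariant Radon measure for the germ of the action at $+\infty$}. Concretely, I would look for a nonzero Radon measure $\mu$ on some interval $(a,+\infty)$ with infinite mass accumulating at $+\infty$, together with a homomorphism $\lambda\colon G\to\R_{>0}$, such that $\varphi(g)_*\mu=\lambda(g)\,\mu$ holds as an identity of germs of measures at the end (this makes sense because every $\varphi(g)$, being orientation preserving, fixes the end $+\infty$). Granting such a $\mu$, set $h(x)=\mu((x_0,x])$ for a basepoint $x_0$. Then $h$ is non-decreasing with $\lim_{x\to+\infty}h(x)=+\infty$, and unwinding the quasi-invariance gives, for $x$ large,
\[
h(\varphi(g)(x))=\lambda(g)^{-1}h(x)+c_g,\qquad c_g:=\mu((x_0,\varphi(g)(x_0)]).
\]
Thus $\psi(g)\colon y\mapsto \lambda(g)^{-1}y+c_g$ satisfies $\psi(g)\circ h=h\circ\varphi(g)$ near $+\infty$; the fact that $\lambda$ is a homomorphism together with the cocycle identity for the constants $c_g$ guarantees that $\psi$ is a homomorphism into $\Aff(\R)$. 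Hence the theorem reduces to the existence and essential uniqueness of the germ-measure $\mu$. Before constructing it I would make two reductions: since semi-conjugacy preserves the germ data sought, I replace $\varphi$ by a minimal action in its semi-conjugacy class (so that orbits accumulate at $+\infty$, which is what forces $\mu$ to have infinite mass and $h\to+\infty$); and I first treat a finite-index solvable subgroup $H\le G$, promoting the conclusion to $G$ afterwards.

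The heart of the matter is the construction of $\mu$ for a finitely generated \emph{solvable} group, which I would carry out by induction on the derived length. The point to keep in mind is that a global quasi-invariant Radon measure need not exist --- this is precisely Plante's phenomenon for $\Z\wr\Z$ --- so the construction must genuinely only see the germ. Let $A\trianglelefteq G$ be the last nontrivial term of the derived series, an abelian normal subgroup. Either $A$ acts trivially as a germ at $+\infty$, in which case the germ representation of $G$ factors through the quotient $G/A$ of smaller derived length and one concludes by induction (the base case being an abelian germ, handled by H\"older's theorem, which yields an invariant germ-measure with trivial character); or $A$ acts nontrivially near the end. In the latter case, the abelian germ-action of $A$ is semi-conjugate to a translation action near $+\infty$, producing an $A$-invariant germ-measure $\mu_A$; since $G$ normalises $A$, each $\varphi(g)_*\mu_A$ is again $A$-invariant, so by uniqueness of the $A$-invariant germ-measure up to scale one gets $\varphi(g)_*\mu_A=\lambda(g)\,\mu_A$ for a scalar $\lambda(g)>0$, and $g\mapsto\lambda(g)$ is automatically a character. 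This directly furnishes the desired $\mu$ and $\lambda$.

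The main obstacle lies exactly in making the second case rigorous, because $A$ \emph{need not be finitely generated} (in $\Z\wr\Z$ the base is an infinite direct sum), and this infinite generation is the very source of the wild, Denjoy-type behaviour that defeats a global measure. The crucial insight to establish is that, although $A$ is infinitely generated, its germ at $+\infty$ is tame --- only finitely much of $A$ survives near the end, because $G$ is finitely generated and $A$ is normal --- so that the abelian theory (H\"older/Plante) and the uniqueness of an invariant measure genuinely apply to the germ of $A$. Verifying that this dichotomy is exhaustive and that the nontrivial case really yields a \emph{unique} translation structure near the end, controlling the possibly complicated local supports, is where the real work sits.

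Finally, the uniqueness of $\psi$ up to affine conjugacy --- which also validates the finite-index extension above, since it forces the finitely many coset-translates $\varphi(g_i)_*\mu_H$ to be pairwise proportional near $+\infty$ and hence averageable into a $G$-quasi-invariant measure with an extended character --- rests on the essential uniqueness of the germ-measure: for a minimal germ at $+\infty$, a nonzero $G$-quasi-invariant Radon measure is unique up to a positive scalar and its character is unique. I would prove this by a minimality argument: given two such measures, the Radon--Nikodym derivative between them transforms under $G$ by the ratio of their characters, and $G$-invariance of a positive function on a minimal germ forces it to be constant, so the characters coincide and the measures are proportional. Given this, two affine actions $\psi_1,\psi_2$ arising from the construction have the same slope character and the same translation cocycle class up to scale, hence are affinely conjugate. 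The irreducibility of $\psi$ is automatic: as $\varphi$ has no fixed germ at $+\infty$ and $h$ is equivariant with $\lim_{x\to+\infty}h(x)=+\infty$, a global fixed point of $\psi$ would pull back to a fixed germ of $\varphi$.
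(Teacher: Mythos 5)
Your strategy is genuinely different from the paper's (which proves a structure theorem: a minimal non-affine action is laminar and horograded by an action of $G/\Fit(G)$, builds the germ maps $h_\pm$ by collapsing the leaves of an invariant lamination containing a fixed leaf, and iterates by induction on the virtual Fitting length), but it contains a gap at exactly the point you flag as ``where the real work sits'': your dichotomy on the abelian normal subgroup $A$ is not exhaustive, and its second branch is false in the one example that matters. Take $G=\Z\wr\Z$ with a Plante-like action $\varphi$ and $A=L=\bigoplus_\Z\Z$. No element of $A$ is trivial on any neighborhood of $+\infty$ (each $\suppphi(h_n)$ is dense in $\R$), so the germ representation does \emph{not} factor through $G/A$ and your first case does not apply. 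But your second case fails too: every point of $\R$ lies in a bounded connected component $I$ of $\suppphi(h_n)$ for some $n$, and if $\mu$ were a Radon measure with $\varphi(h_n)_*\mu=\lambda(h_n)\mu$ near $+\infty$, the $\varphi(h_n)$-translates of a fundamental domain in $I$ would have masses forming a two-sided geometric series, which is either identically zero or sums to $+\infty$; since $\overline{I}$ is compact and $\mu$ is Radon, $\mu(I)=0$, and one deduces there is no nonzero quasi-invariant germ measure. Likewise the germ of $A$ at $+\infty$ is not semi-conjugate to a nontrivial translation action: each $\varphi(h_n)$ has fixed points accumulating at $+\infty$, which forces every translation number to vanish. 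Your ``crucial insight'' that only finitely much of $A$ survives near the end is also false here: all of $L$ acts nontrivially on every neighborhood of $+\infty$.

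The correct affine action for this example is the cyclic one with $L$ in its kernel, and it is reached only by a monotone map that collapses each component of $\suppphi(h_n)$ to a point --- i.e.\ by killing the dynamics of $A$ without $A$ acting trivially. Proving that this collapse is well defined and yields an action of $G/\Fit(G)$ on a \emph{line} (rather than on a branching real tree), and that the procedure terminates, is precisely the content of the paper's lamination/directed-tree machinery (Propositions \ref{prop.minimalamination}, \ref{prop.minimalmodeltree}, \ref{l-quotienting-horocyclic} and Theorem \ref{t-solvable-inductive}); your proposal has no substitute for it. The uniqueness argument inherits the same problem, since it rests on the essential uniqueness of a quasi-invariant germ measure for $\varphi$, which need not exist; the paper instead proves uniqueness by showing directly that a monotone germ-equivariant map between two irreducible affine actions must be the restriction of an affine map.
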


\begin{rem}
	Of course,  equivalent results hold for irreducible actions on finite intervals, considering neighborhoods of the endpoints.
\end{rem}

Theorem \ref{mthm.affine_trace} should be compared with a result of Chiswell and Kropholler \cite{ChiswellKropholler}, stating that a solvable group is left orderable if and only if it is locally indicable  (this is true more generally for amenable groups,  by a result of Witte Morris \cite{WitteMorris}). In an equivalent formulation, that result says that a finitely generated solvable group $G$ has a non-trivial action $\varphi\colon G \to \homeo_0(\R)$ if and only if it admits a non-trivial homomorphism to the group of translations $(\R, +)$. The reader may note that Theorem \ref{mthm.affine_trace} easily recovers this result, using that every non-trivial subgroup of $\Aff(\R)$ has a homomorphism to $(\R, +)$. An important difference is that Theorem~\ref{mthm.affine_trace} uniquely associates to every  action of $G$  on $\R$, a homomorphism to  $\Aff(\R)$ with an explicit dynamical meaning; in contrast, the proofs in \cite{ChiswellKropholler} and \cite{WitteMorris} proceed through considerations on the set of all left orders to deduce the existence of a homomorphism to $\R$, but do not  establish any explicit relation between a  fixed action (or order) and such homomorphisms.  

Our  third main result addresses the question of {rigidity} of the family of affine actions under small perturbations. To this end, given a group $G$, we denote by $\Homirr(G, \homeo_0(\R))$ the space of irreducible actions $\varphi\colon G\to \homeo_0(\R)$. Recall that this space has a natural topology, induced from the pointwise convergence topology of all maps $G\to \homeo_0(\R)$, with respect to the compact-open topology on $\homeo_0(\R)$. In view of Plante's problem, it is then natural to ask whether an action of a solvable group which is sufficiently close to an affine action must be semi-conjugate to an affine action. The following result provides an  affirmative answer for a vast class of solvable groups, namely those that are \emph{virtually metanilpotent}, that is, admitting a nilpotent normal subgroup $N$ such that $G/N$ is virtually nilpotent. This class includes in particular all virtually solvable \emph{linear} groups (that is, subgroups of $\mathsf{GL}(n, \mathbb{K})$ for some field $\mathbb{K}$), thanks to a well-known result of Mal'cev \cite{Malcev} (see Remark \ref{r-Malcev}).

\begin{introthm}[Perturbations of affine actions]\label{mthm.affine_rigid}
	Let $G$ be a finitely generated virtually metanilpotent group.
	Then, the subset of $\Homirr(G, \homeo_0(\R))$ of all irreducible actions which are semi-conjugate to a non-abelian affine action is open.
\end{introthm}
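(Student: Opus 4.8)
The plan is to reduce the statement to the robustness of a single, manifestly $C^0$-open dynamical feature---the presence of a contracting element---and then to use the canonical affine trace of Theorem~\ref{mthm.affine_trace}, together with the metanilpotent structure, to promote this feature into a genuine global semi-conjugacy to a non-abelian affine action.

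First I would record the normal form of the target. If $\varphi_0\in\Homirr(G,\homeo_0(\R))$ is semi-conjugate to a non-abelian affine action $\psi_0\colon G\to\Aff(\R)$, then $\psi_0$ is minimal and proximal, its derived subgroup consists of nontrivial translations, and there is an element $s\in G$ whose image $\psi_0(s)\colon x\mapsto ax+c$ has dilation factor $a\neq 1$, hence a global attracting fixed point $p$. Pulling back through the semi-conjugacy, $\varphi_0(s)$ contracts a suitable bounded interval $J$ strictly into its interior, while a conjugate $tst^{-1}$ by an element $t$ of the derived subgroup contracts toward a distinct point $t(p)\neq p$; the pair $s,\,tst^{-1}$ plays ping-pong and certifies proximality of $\varphi_0$ on its minimal set. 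A key structural point, valid because $G$ is metanilpotent, is that the nilpotent normal subgroup $N\trianglelefteq G$ must act through the translation subgroup in any affine model, since nilpotent subgroups of $\Aff(\R)$ are abelian and a normal abelian subgroup of a non-abelian affine group consists of translations; thus $\varphi_0|_N$ has tame, measure-preserving dynamics, with no room for tree-like blow-ups.

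Next I would establish the openness of this configuration. The conditions ``$\varphi(s)(\overline J)\subset\operatorname{int}J$'' and ``$\varphi(tst^{-1})$ contracts a disjoint interval'' are $C^0$-open in $\varphi$. Hence for every $\varphi$ sufficiently close to $\varphi_0$ the same pair still plays ping-pong, so $\varphi$ has a unique minimal set $\Lambda_\varphi$ on which it acts proximally, and $s$ acts on $\Lambda_\varphi$ as a contraction. Applying Theorem~\ref{mthm.affine_trace} to $\varphi$ produces its canonical affine trace $\psi_\varphi$ near the endpoints; the persistence of the contraction of $s$ forces the associated dilation homomorphism to remain nontrivial, so $\psi_\varphi$ is again non-abelian.

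The main work---and the expected main obstacle---is the globalization step: upgrading the local (germ) trace $\psi_\varphi$ near the endpoints to a global semi-conjugacy $h_\varphi\colon\R\to\R$ onto a non-abelian affine action, while simultaneously excluding that $\varphi$ is a Plante-like or otherwise non-affine action which merely happens to carry a contracting element. This is exactly where metanilpotency is indispensable, and where the statement fails for general solvable groups: because $N$ is forced to act by translations, its action on $\Lambda_\varphi$ is semi-conjugate to a minimal translation action and cannot sustain the tree-like structure underlying the ubiquitous Plante-like actions of Proposition~\ref{p-plante-ubiquitous}; proximality then collapses $\Lambda_\varphi$ onto the affine line and stitches the two endpoint traces into a single global affine semi-conjugacy. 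I would carry out this step by analysing the $G$-action on the set of connected components of the complement of $\Lambda_\varphi$ and showing, via the combination of proximality (from the contraction) and the translation-type dynamics of $N$, that this action reconstructs $\psi_\varphi$, whence $\varphi$ is semi-conjugate to a non-abelian affine action.
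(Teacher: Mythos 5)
Your proposal correctly isolates the crux of the matter (ruling out that a small perturbation of a non-abelian affine action is laminar), but the mechanism you propose for this step does not work, and this is a genuine gap rather than a technicality. The claim that metanilpotency forces the nilpotent normal subgroup $N$ to have ``tame, measure-preserving dynamics, with no room for tree-like blow-ups'' is false: the group $\Z\wr\Z$ is metabelian, hence metanilpotent, and in its Plante-like actions (\S\ref{ssc.plante}) the abelian normal subgroup $L=\Fit(\Z\wr\Z)$ acts irreducibly with every element totally bounded, precisely sustaining the invariant lamination. The constraint you invoke (a normal nilpotent subgroup of a non-abelian affine group consists of translations) only says something about affine \emph{models}; it places no restriction on what $N$ does in a laminar action of the same group. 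For the same reason, your $C^0$-open ping-pong configuration does not certify anything affine: by Lemma~\ref{l-pseudohomtothety-exist} every minimal laminar action of a finitely generated virtually solvable group contains a pseudo-homothety, and in a Plante-like action of $\Z\wr\Z$ the elements $g$ and $h_0gh_0^{-1}$ act as homotheties with distinct fixed points, so the persistence of two disjoint contracted intervals is perfectly compatible with the perturbed action being laminar. Finally, the inference that the persistence of the contraction forces the affine trace of Theorem~\ref{mthm.affine_trace} to stay non-abelian is also unjustified: in the Plante-like action of $\Z\wr\Z$ the element $g$ acts as a homothety while its affine trace is a nontrivial translation (the action is horograded by a cyclic action), so a pseudo-homothety in $\varphi$ can have an abelian trace.

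What is actually needed, and what the paper supplies, is a $C^0$-open condition that \emph{separates} non-abelian affine actions from minimal laminar actions horograded by translations. The paper's key point (Lemma~\ref{lem.domination} and Proposition~\ref{prop.focal_away}) is quantitative: if $\varphi$ is minimal laminar and horograded by an action by translations $j$, then for every $g\in\ker j$ and every point $p$ the whole orbit $\{\varphi(g^n)(p)\}_{n\in\Z}$ is trapped between $\varphi(h^{-1})(p)$ and $\varphi(h)(p)$ for some $h$ in the explicit finite set $S\cup\{sg^{\pm1}s^{-1}:s\in S\}$; whereas for a non-abelian affine action one can pick $g\in[G,G]$ acting as a fixed-point-free translation, whose orbit eventually escapes any such finite cage. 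Failure of this trapping is the open condition. Metanilpotency enters only through Corollary~\ref{c-metanilpotent-dichothomy}, which guarantees that the horograding is by translations after a single step and hence keeps the finite set above under control; the counterexample of \S\ref{ssc.ZwrZwrZ} shows the statement fails when this is unavailable. The remaining case of perturbations semi-conjugate to actions by translations is handled by a separate, easier open condition (Proposition~\ref{p.translations_away}). Your outline contains no substitute for the domination lemma, so the globalization step as written cannot be completed.
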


\begin{rem} Theorem \ref{mthm.affine_rigid} is false for general solvable groups: we construct
in \S \ref{ssc.ZwrZwrZ} a  finitely generated 3-step-solvable group $G$ and a sequence of irreducible actions  $(\varphi_n)\subset \Homirr(G, \homeo_0(\R))$ which are not semi-conjugate to any affine action, yet converge to a non-abelian affine action. 
\end{rem}
\begin{rem}
Theorem \ref{mthm.affine_rigid} implies in particular that if $\psi\colon G\to \Aff(\R)$ is a non-abelian affine action, then every irreducible action $\varphi$ sufficiently close to $\psi$ remains semi-conjugate to a non-abelian affine action. It is however  not true in general that ${\varphi}$ is semi-conjugate to $\psi$ itself: 
indeed even  metabelian groups often admit continuous paths of representations into $\Aff(\R)$ (see e.g.\ Proposition \ref{l-wr-affine}).
\end{rem}

\subsection{The structure theorem for general actions of solvable groups}

Although affine actions are a central character in the results stated above, the essence of this work is about understanding actions of solvable groups on the line that are \emph{not} semi-conjugate to any affine action. Our main structure theorem in this direction is Theorem \ref{t-main} below, which requires the framework of \emph{laminar actions} and the associated notion of \emph{horograding}, introduced and developed in our previous work \cite{BMRT}. We recall these notions and its main general properties in Section~\ref{sec generalities} (we have made an effort to make the presentation as self contained as possible).

To state Theorem \ref{t-main}, recall that an action $\varphi\colon G\to \homeo_0(\R)$ is {\em laminar} if it preserves a covering \emph{lamination} of the real line, that is, a closed subset $\mathcal{L}$ of the set of ordered pairs of points $\{(x, y) : x<y\}$ (thought of as a collection of bounded open intervals) which covers the line and is \emph{cross-free}, namely any two intervals $I, J\in \mathcal{L}$ are either nested or disjoint. The analogous notion of laminar actions on the circle is classical, and natural examples arise from actions of Fuchsian groups and 3-manifold groups, see for instance Calegari \cite{Calegaribook}, Baik \cite{LaminarBaik}, and references therein. As explained in \cite{BMRT}, laminar actions on the line are a source of somewhat ``exotic'' behaviors. For instance Plante's action of $\Z\wr\Z$ mentioned above, which not semi-conjugate to any affine action, is laminar (see Proposition \ref{prop Plante is laminar}). 

Although the fact of being laminar forces some restrictions on the action (for instance, every element has fixed points \cite[Lemma 8.1.9]{BMRT}),  to get stronger structural results we need to gain information on how $G$ acts on some invariant lamination. This is done through the notion of \emph{horograding} of a laminar action by another action of the group. The idea of this notion is to find an extra (in some sense transversal) direction for a group action on the line, where the group acts by another, hopefully simpler, action. To define it, observe that a lamination $\mathcal{L}$ of the line is naturally a partially ordered set $(\mathcal{L}, \subseteq)$ with respect to inclusion.

\begin{dfn}
Let $\varphi\colon G\to \homeo_0(\R)$ be a laminar action, and $j\colon G\to \homeo_0(\R)$ another irreducible action. A (positive) \emph{horograding} of $\varphi$ by $j$ is a pair $(\mathcal{L}, \hor)$ constisting of a $\varphi$-invariant lamination $\mathcal{L}$, and an order-preserving map $\hor \colon (\mathcal{L}, \subseteq)\to (\R, \le)$ such that $\hor(\varphi(g)(I))=j(g)(\hor(I))$ for every $I\in \mathcal{L}$ and $g\in G$.
\end{dfn}

We refer to Section \ref{sec generalities} and \cite[\S\S 8.2 and 15.1]{BMRT} for more details around this notion.
We already showed in  \cite[Theorem 8.3.8]{BMRT} that every minimal action of a finitely generated solvable group on the line is either conjugate to an affine action, or it is laminar; this observation was the starting point of this paper (for completeness, we streamline a short proof in Section \ref{sec Focal solvable}, as part of the proof of Theorem \ref{t-main}). The core result of this article shows that in the laminar case, the action can always be horograded by an action of strictly smaller complexity. 
For this, recall that the Fitting subgroup $\Fit(G)$ of a group $G$ is the subgroup generated by all normal nilpotent  subgroups of $G$. 

\begin{introthm} \label{t-main}
Let $\varphi\colon G\to \homeo_0(\R)$ be a minimal action of a finitely generated virtually solvable group. Then either
\begin{enumerate}
\item  $\varphi$ is conjugate to an affine action, or
\item \label{main-i-focal} $\varphi$ is laminar and can be horograded by an action $j\colon G\to \homeo_0(\R)$ which factors through $G/\Fit(G)$. Moreover this action $j$  is either minimal or cyclic.
\end{enumerate}
\end{introthm}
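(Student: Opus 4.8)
The plan is to prove Theorem \ref{t-main} by starting from the dichotomy established in \cite[Theorem 8.3.8]{BMRT}: a minimal action of a finitely generated solvable group on the line is either conjugate to an affine action or laminar. So the first step is to reduce to the laminar case, which gives alternative (1) immediately if we are in the affine case. The real content is alternative (2): producing a horograding by an action factoring through $G/\Fit(G)$ that is either minimal or cyclic.

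**The horograding by the Fitting quotient.**

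In the laminar case, I would proceed by induction on the derived length (or Fitting length) of $G$. The core idea is to exploit the normal nilpotent subgroup $\Fit(G)$. Since $\Fit(G)$ is normal and nilpotent, Plante's result on nilpotent actions applies to its action, and by minimality/irreducibility considerations the lamination $\mathcal{L}$ preserved by $\varphi$ should interact with the $\Fit(G)$-action in a controlled way — roughly, $\Fit(G)$ should act trivially on the "transverse direction" recorded by the horograding. The construction of the horograding map $\hor\colon(\mathcal{L},\subseteq)\to(\R,\le)$ would be via a $G$-invariant (Radon) measure or cocycle on the space of intervals of the lamination, or more directly by building the transverse action as the action of $G$ on a quotient tree/line obtained by collapsing the leaves fixed by $\Fit(G)$. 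The key is to check that the resulting transverse action $j$ is $\Fit(G)$-trivial, hence factors through $G/\Fit(G)$.

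**The main obstacle and the minimal-or-cyclic trichotomy.**

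The hard part will be ensuring both that $j$ is well-defined and order-preserving (i.e.\ that the horograding axiom $\hor(\varphi(g)(I))=j(g)(\hor(I))$ holds) and that $j$ is \emph{irreducible}, since the definition of horograding requires $j$ to be irreducible. Establishing that $j$ factors through $G/\Fit(G)$ requires a genuine structural input: one must show that the normal nilpotent subgroups act by translations (or trivially) along the transverse direction, presumably again via Plante's theorem together with the fact that a nilpotent normal subgroup acting minimally-laminarly cannot create transverse complexity. Once $j$ is obtained as an irreducible action of $G/\Fit(G)$, the final dichotomy between $j$ being minimal or cyclic should follow from the general classification of irreducible actions on the line: either $j$ is minimal (possibly after semi-conjugacy, but here we want it genuinely minimal or reduce to it), or its minimal set is discrete, in which case the action is semi-conjugate to — and after passing to the quotient acts as — a cyclic action by a single translation-type generator. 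I would invoke the standard structure theory recalled in Section \ref{sec generalities} to pin down that an irreducible action with a discrete minimal set is cyclic. The delicate point throughout is the bookkeeping of minimality versus semi-conjugacy: one must be careful that "horograded by $j$" allows $j$ to be taken in reduced form, so that the minimal-or-cyclic alternative is exhaustive.
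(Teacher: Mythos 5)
Your outline matches the paper's proof only at the very top level (the dichotomy from \cite[Theorem 8.3.8]{BMRT}, then an induction exploiting $\Fit(G)$), but the step that carries all the content is left as a ``should'', and the tool you propose for it would not work. You assert that ``$\Fit(G)$ should act trivially on the transverse direction'' and that this follows ``presumably again via Plante's theorem''. It does not: Plante's theorem concerns actions of nilpotent groups on the line, whereas here one must control how a (possibly infinitely generated) nilpotent normal subgroup acts on the partially ordered set of leaves of a minimal invariant lamination, i.e.\ on a directed real tree. The mechanism the paper uses is purely order-theoretic and has nothing to do with invariant measures: for a d-minimal (or simplicial) action on a directed tree $\Tbb\neq\R$, any non-trivial normal subgroup whose centralizer is non-trivial must be \emph{horocyclic} (Corollary \ref{c-normal-focal-centralizer}, resting on Proposition \ref{p-focal-normal} and Lemma \ref{l-centralizer-focal}); this applies to every nilpotent normal subgroup because its center is non-trivial, hence to $\Fit(G)$. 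Your alternative suggestion of building $\hor$ from a $G$-invariant transverse Radon measure is actually doomed for general solvable groups: by Proposition \ref{prop.Rtreemetric}, the existence of an invariant compatible metric is \emph{equivalent} to being horograded by translations, which holds in the metanilpotent case but fails already for the $\Z\wr\Z\wr\Z$-type examples of \S\ref{ssc.ZwrZwrZ}.

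The second missing ingredient is the actual quotient construction and the induction that makes the statement true for arbitrary virtual Fitting length. Collapsing ``the leaves fixed by $\Fit(G)$'' is not the right operation ($\Fit(G)$ has no fixed leaves once it is horocyclic); one must pass to the orbit space $\Tbb/\Fit(G)$, check that it is a (pre-)directed tree on which $G/\Fit(G)$ acts d-minimally (Proposition \ref{l-quotienting-horocyclic} and Lemma \ref{lem.predirectedtree} --- note the Hausdorff-type subtlety illustrated in Example \ref{e-predirected}), and then \emph{iterate}: one quotient step need not produce $\R$, so one inducts on $\vf(G)$ as in Theorem \ref{t-solvable-inductive}, obtaining in the end a minimal line action with $\Fit(G)\subseteq\ker j$. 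The ``cyclic'' branch of the conclusion is not obtained by post-processing an arbitrary $j$ as you suggest, but arises structurally from the case where the minimal invariant lamination is discrete, giving a simplicial tree action horograded by a cyclic action via Proposition \ref{prop.Rtreemetric}. Your reduction of a discrete-minimal-set $j$ to a cyclic one would be harmless once $j$ exists, but the existence of $j$ is precisely what your proposal does not establish.
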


Note that  $G/\Fit(G)$ is a quotient of $G$ with strictly smaller (virtual) solvable length. This is really the crucial point in the statement and, as a consequence, Theorem \ref{t-main} can be applied inductively, by taking at each step the horograding action $j$ as the new $\varphi$: if the latter is not conjugate to any affine action, then it must be again laminar and  horograded by an action of a quotient of even smaller solvable length. In finitely many steps, we must reach an affine action $\psi\colon G\to \Aff(\R)$.  Thus Theorem \ref{t-main} shows that actions of a solvable group on the line can be naturally organized in a tower of finitely many levels of complexity, where  affine actions are the simplest and serve as a bedrock on which all actions are built.

Let us outline the connection between Theorem \ref{t-main} and the preceding results.
Theorem \ref{mthm.affine_trace} is the most direct application of Theorem \ref{t-main}: the action $\psi$ in the statement of Theorem \ref{mthm.affine_trace} is exactly the affine action obtained from $\varphi$ through the inductive procedure described above.  The proof of Theorem \ref{t-main} and \ref{mthm.affine_trace} are given in Section \ref{sec Focal solvable}.

When the group $G$ is virtually metanilpotent, the conclusion of Theorem \ref{t-main} is somehow stronger. Indeed, in this case we have that $G/\Fit(G)$ is virtually nilpotent, and by the result from Plante \cite{PlanteMeasure}, we know that every minimal action of  $G/\Fit(G)$ must be by translations.  So the horograding $\varphi'$ is an action by translations and the inductive procedure ends after one step. With this tool at hand, in Section \ref{s-metanilpotent} we  give the proof of Theorem \ref{mthm.affine_rigid}.

Finally, we use Theorem  \ref{t-main} to reduce the proof of Theorem \ref{mthm.C1} to the case of the group $G=\Z \wr \Z$. Indeed, we shall see in Section \ref{ssc.Plante} that Theorem \ref{t-main}  readily provides  a classification of actions of $\Z \wr \Z$ on the line up to semi-conjugacy: these are either affine or one of the four Plante-like actions described in \S \ref{sec ejemplo lamp lighter}. Using a similar argument, we show in Proposition \ref{p-plante-ubiquitous} that for every laminar action of a finitely generated solvable group  on the line, the group must have a subgroup isomorphic to $\Z \wr \Z$ whose action is semi-conjugate to a Plante-like action.  To finish the proof Theorem \ref{mthm.C1}, we show in Section \ref{sc.C1}  that no Plante-like action can be semi-conjugate to any $C^1$ action on the interval.

We point out that, in the proof Theorem \ref{t-main}, rather than working directly with invariant laminations, we use the dual point of view of group actions on  \emph{directed (real) trees}. Indeed for any laminar action $\varphi\colon G\to \homeo_0(\R)$, any $\varphi$-invariant lamination $(\mathcal{L}, \subseteq)$ of the real line can be completed into a partially ordered set $(\Tbb, \treeorder)$, which is isomorphic to a real tree endowed with a partial order induced by the choice of an end $\infty_\Tbb \in \partial \Tbb$, in such a way that the  $\varphi$-action on $\mathcal L$ extends to an action on $(\Tbb,\treeorder)$ (which fixes  the end $\infty_\Tbb$). To prove Theorem \ref{t-main}, we  show in \S \ref{subsection trees} that in this situation, any nilpotent normal subgroup $N$ of $G$ can be ``quotiented'' to obtain an action on a new  tree $\Tbb/N$ which ``branches less''.  Under the assumptions of Theorem \ref{t-main}, an inductive argument allows to reach the situation where $\Tbb$ has no branching at all, i.e.\ $\Tbb\cong \R$, and the group action on it is the desired horograding action.

In a follow-up paper, we will further use Theorem \ref{t-main} and the inductive approach developed here to provide more detailed results on the structure of semi-conjugacy classes in the space $\Homirr(G, \homeo_0(\R))$ for a finitely generated solvable group.

{\small \subsection*{Acknowledgments}
	The authors thank the anonymous referees for the several suggestions that improved the clarity of the presentation.
	
	All the authors acknowledge the support of the project MATH AMSUD, DGT -- Dynamical Group Theory (22-MATH-03), and the project ANR
	Gromeov (ANR-19-CE40-0007).
	J.B. was also partially supported with a “poste rouge” at ICJ (UMR CNRS 5208) by INSMI for IRL IFUMI members.
	N.M.B. was also partially supported by  the LABEX MILYON (ANR-10-LABX-0070) of Universit\'e de Lyon, within the program ``Investissements d'Avenir'' (ANR-11-IDEX-0007) operated by the French National Research Agency, and by IFUMI.
	C.R. is partially supported by FONDECYT 1210155 and FONDECYT 1241135.
	M.T. was also partially supported by
	the project ANER Agroupes (AAP 2019 Région Bourgogne--Franche--Comté), by CNRS with a
	semester of ``d\'el\'egation'' at IMJ-PRG (UMR CNRS 7586), and by the EIPHI Graduate School
	(ANR-17-EURE-0002).}

\section{Notation and preliminaries}
\subsection{Actions on the line}\label{sc.preliminary-actions}
Let us start with some notation and terminology. When a group $G$ acts on a set $X$, we simply write $g.x$ for the action, unless there is risk of confusion. If $\varphi\colon G\to \Bij(X)$ is an action, we write $\stab^\varphi(x)$ for the stabilizer of $x\in X$ in $G$ under the action $\varphi$,  $\fixphi(g)$ for the subset of fixed points in $X$ of an element $g\in G$, and $\suppphi(g)=X\setminus \fixphi(g)$ for its support.

Let $X$ be a real interval with non-empty interior (that  is, up to diffeomorphism,  one of the spaces $\R, [0,1], [0, 1)$). The interior of $X$ will be denoted by $\Int(X)$. The group of orientation-preserving homeomorphisms of $X$  is denoted by $\homeo_0(X)$, and similarly we denote the group of orientation-preserving $C^r$ diffeomorphisms $\Diff_0^r(X)$, for $r\ge 1$.  We will say that a group action $\varphi\colon G\to \homeo_0(X)$ is \emph{irreducible} if it has no fixed point in $\Int(X)$.
We also recall the notion of semi-conjugacy for actions on intervals (see Kim, Koberda, and Mj \cite{KKMj}).
\begin{dfn}
Let $X$ and $Y$ be real intervals with non-empty interior, and let $\varphi\colon G\to \homeo_0(X)$ and $\psi\colon G\to \homeo_0(Y)$ be two irreducible actions of a group $G$. We say that $\varphi$ and $\psi$ are \emph{semi-conjugate} if there exists a monotone map $h\colon \Int(X)\to \Int(Y)$ such that $h\circ\varphi(g)=\psi(g)\circ h$ for any $g\in G$. When $h$ is non-decreasing, we say that  $\varphi$ and $\psi$  are \emph{positively} semi-conjugate. When $h$ is an (orientation-preserving) homeomorphism, we say that $\varphi$ and $\psi$ are \emph{(positively) conjugate}.
\end{dfn} 

A well-known argument (see Navas \cite[Proposition 2.1.12]{Navas-book}) shows that when the group $G$ is finitely generated, every irreducible group action $\varphi\colon G\to \homeo_0(\R)$ admits a minimal non-empty closed  invariant subset  (that we shall call for short  a \emph{minimal invariant subset}), which is either $\R$, a perfect subset of empty interior, or a closed (and thus discrete) orbit. From the perspective of semi-conjugacy, this implies that $\varphi$ is always semi-conjugate to an action $\psi\colon G\to \homeo_0(\R)$ which  is either \emph{minimal} (that is, every orbit is dense), or which takes values in the group $(\Z, +)$ of integer translations, in which case we will say that $\psi$ is \emph{cyclic}. Moreover, such a minimal or cyclic representative of each semi-conjugacy class is unique up to conjugacy.  Thus when studying continuous actions of $G$ up to semi-conjugacy it is enough to restrict to minimal actions (the cyclic case being rather trivial), and we will often do so.

We will often build actions of groups on real intervals starting from actions on totally ordered sets. This strategy is quite general, but for the purpose of this note we will restrict our attention to actions on ordered sets $(\Omega,<)$ which are countable, have neither minimal nor maximal element, and which are \emph{densely  ordered} in the sense  that for every $\omega_1<\omega_2$ in $\Omega$ there is $\omega\in \Omega$ such that $\omega_1<\omega<\omega_2$. The good thing about these sets is that Cantor's ``back and forth argument'' ensures that there is an order-preserving bijection $t\colon  (\Omega,<)\to (\Q,<)$ where rationals are equipped with the natural order (see for instance Clay and Rolfsen \cite[Theorem  2.22]{ClayRolfsen}).

\begin{dfn} \label{def dynamical realization}
	Let  $(\Omega,<)$ be a countable densely ordered set having neither maximal nor minimal element, $t\colon \Omega\to \Q$ an order-preserving bijection, and $\varphi\colon G\to  \Aut(\Omega,<)$  a non-trivial order-preserving action. A \emph{dynamical realization} of $\varphi$ is an irreducible action $\bar\varphi\colon G\to \homeo_0(\R)$ with the property that
	$\bar\varphi(g)(t(\omega))= t(\varphi(g)(\omega))$
	for every $g\in G$ and every $\omega\in \Omega$.
\end{dfn}
A dynamical realization always exists, see \cite[\S 2.4]{ClayRolfsen} for details. Moreover, if $\bar\varphi_1$ and $\bar\varphi_2$ are two dynamical realizations of $\varphi\colon G\to \Aut(\Omega,<)$, then they are positively conjugate. Indeed, if $t_1$ and $t_2$ are the corresponding bijections from $\Omega$ to $\Q$, then $h=t_1\circ t_2^{-1}$ can be extended to the whole real line so that it conjugates $\bar\varphi_1$ to $\bar\varphi_2$. This is why we usually refer to \emph{the} dynamical realization of $\varphi$. The following result will be useful later.

\begin{lem} \label{l-dynamical-realisation}
Let $(\Omega,<)$ be a countable densely ordered set with neither maximal nor minimal element, and $\varphi\colon G\to \Aut(\Omega,<)$ an order-preserving action whose dynamical realization $\bar\varphi\colon G\to \homeo_0(\R)$ is minimal.
Let $\psi\colon G\to \homeo_0(\R)$ be an irreducible action, and assume that   $\sigma\colon \Omega\to \R$  is a map which is non-decreasing and $G$-equivariant, in the sense that
\[ \sigma (\varphi(g)(\omega))=\psi(g)(\sigma(\omega)) \quad \quad \text{for every }g\in G, \omega\in \Omega.\]
Then, $\sigma$ is injective, and  $\psi$ and $\bar{\varphi}$ are positively semi-conjugate. In particular, when $\psi$ is minimal, it is positively conjugate to $\bar{\varphi}$. 
\end{lem}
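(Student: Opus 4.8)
The plan is to reduce all three assertions to the construction of a single non-decreasing, $G$-equivariant map $h\colon \R\to\R$ intertwining $\bar\varphi$ and $\psi$, and then to read off injectivity and the conjugacy statement from the two minimality hypotheses. First I would transport $\sigma$ to the dynamical realization. Recall that $\bar\varphi$ is built from an order isomorphism $t\colon(\Omega,<)\to(\Q,<)$ with $t(\Omega)=\Q$ and $\bar\varphi(g)(t(\omega))=t(\varphi(g)(\omega))$; in particular $\bar\varphi(g)$ restricts to an increasing bijection of the dense set $\Q\subset\R$. Setting $h_0:=\sigma\circ t^{-1}\colon\Q\to\R$, the hypotheses on $\sigma$ translate into the statement that $h_0$ is non-decreasing and satisfies $h_0(\bar\varphi(g)(q))=\psi(g)(h_0(q))$ for all $q\in\Q$ and $g\in G$.

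Next I would extend $h_0$ to the whole line by $h(x):=\sup\{h_0(q): q\in\Q,\ q\le x\}$. Density and monotonicity of $h_0$ guarantee that the supremum is finite (bound it above by $h_0(q_+)$ for any rational $q_+>x$), so $h$ is a well-defined non-decreasing map agreeing with $h_0$ on $\Q$. Equivariance is then a direct computation: since $\bar\varphi(g)$ permutes $\Q$ increasingly and $\psi(g)$ is an increasing homeomorphism, hence commutes with suprema of bounded sets, one gets $h(\bar\varphi(g)(x))=\sup\{\psi(g)(h_0(q)): q\in\Q,\ q\le x\}=\psi(g)(h(x))$. Finally $h$ cannot be constant, since a constant value would be a global fixed point of $\psi$, contradicting irreducibility. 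This already exhibits $\bar\varphi$ and $\psi$ as positively semi-conjugate.

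To obtain injectivity of $\sigma$ I would invoke minimality of $\bar\varphi$. Let $P\subset\R$ be the open set of points at which $h$ is locally constant. Because $\psi(g)$ is a homeomorphism, $h$ is locally constant at $x$ if and only if it is at $\bar\varphi(g)(x)$, so $P$ is $\bar\varphi$-invariant; its complement is a closed $\bar\varphi$-invariant set, non-empty because $h$ is non-constant. Minimality forces this complement to be all of $\R$, that is $P=\varnothing$, so the non-decreasing map $h$ has no interval of constancy and is therefore strictly increasing. Since $\sigma=h|_{\Q}\circ t$ and $t$ is an order isomorphism, $\sigma$ is strictly increasing, hence injective.

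For the last statement, assume $\psi$ is minimal. The image $h(\R)$ is $\psi$-invariant (as $\psi(g)h=h\bar\varphi(g)$ and $\bar\varphi(g)$ is onto), so its closure is a non-empty closed $\psi$-invariant set and equals $\R$ by minimality; thus $h(\R)$ is dense. A strictly increasing function with dense image can have no jump, since a jump at a point $x_0$ would leave an open interval of $\R$ disjoint from $h(\R)$; hence $h$ is continuous, and a continuous strictly increasing map with dense image is an orientation-preserving homeomorphism of $\R$. Therefore $h$ realizes a positive conjugacy between $\bar\varphi$ and $\psi$. The only delicate points in this scheme are the finiteness and the equivariance of the supremum extension; once $h$ is in hand, both injectivity and surjectivity are instances of the same principle, namely that a non-empty closed invariant set must be the whole line, applied to the two minimality hypotheses in turn. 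I expect this to be the conceptual crux rather than a genuine obstacle.
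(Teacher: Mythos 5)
Your proposal is correct and follows essentially the same route as the paper: extend $\sigma$ to a non-decreasing equivariant map on $\R$ via a supremum over rationals below $x$, kill the (open, invariant) locus of local constancy by minimality of $\bar\varphi$ to get strict monotonicity and hence injectivity, and in the minimal case use density of the image to upgrade the semi-conjugacy to a conjugacy. The only (welcome) difference is that you spell out a couple of points the paper leaves implicit, namely the finiteness and equivariance of the supremum and the fact that $h$ is non-constant by irreducibility of $\psi$.
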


\begin{proof}
Let $t\colon \Omega\to \Q$ be the bijection used to define the dynamical realization $\bar\varphi$. We define a map $\tilde{\sigma}\colon \R \to \R$ by setting $\tilde{\sigma}(x)=\sup \{\sigma(\omega) : t(\omega) \le x\}$. Then $\tilde{\sigma}$ is non-decreasing and equivariant with respect to the $\bar{\varphi}$-action on the source and the $\psi$-action on the target, showing that $\psi$ is positively semi-conjugate to $\bar{\varphi}$. We claim that $\tilde{\sigma}$ is injective. To see this, consider the subset 	
	\[S=\{x\in\R:\tilde\sigma\text{ is constant on a neighborhood of }x\}.\]
It is direct to see that $S$ is open and $\bar\varphi$-invariant, and therefore empty by minimality of $\bar{\varphi}$. To conclude the proof of the claim, notice that if $\tilde{\sigma}(x)\neq\tilde\sigma(y)$ for $x<y$ then, monotonicity of $\tilde\sigma$ would imply $(x,y)\subseteq S$, which is a contradiction. In  particular, since $t$ is injective, so is $\sigma=\tilde{\sigma}\circ t$.
	
Assume now that $\psi$ is minimal. In this case, the image of $\tilde\sigma$ must be dense, or otherwise its closure would be a proper closed $\psi$-invariant subset. Therefore, since $\tilde{\sigma}$ is monotone, injective, and has dense image, it must be a homeomorphism. This shows that $\bar{\varphi}$ and $\psi$ are positively conjugate.
\end{proof}

\subsection{Virtually solvable groups and Fitting series} \label{s-fitting}
In this paper, we will be mostly interested in groups that are  \emph{virtually solvable}, that is, contain a (normal)  solvable subgroup of finite index.  We refer the reader to the book by Lennox and Robinson \cite{LDR} for a general reference on the theory of solvable groups. 

Recall that the \emph{Fitting subgroup} $\Fit(G)$ of a group $G$ is the subgroup generated by all nilpotent normal subgroups of $G$. Fitting's theorem states that the group generated by two nilpotent normal subgroups of $G$ is again nilpotent. In particular, $\Fit(G)$ is in fact the union of all nilpotent normal subgroups. The following basic lemma will be used without mention. 
\begin{lem} \label{l-Fitting}
Let $G$ be a group, and $G_0\unlhd G$ a normal solvable subgroup of finite index. Then  $\Fit(G)$  contains every nilpotent normal subgroup of $G_0$.
\end{lem}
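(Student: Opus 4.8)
The plan is to show that any nilpotent normal subgroup $N\unlhd G_0$ is contained in some nilpotent normal subgroup of $G$, which by definition lies inside $\Fit(G)$. The natural candidate is the normal closure $N^G$ of $N$ in $G$, namely the subgroup generated by all conjugates $gNg^{-1}$ with $g\in G$. This subgroup is normal in $G$ by construction and contains $N$, so it suffices to prove that it is nilpotent and that it is actually contained in $G_0$.

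First I would exploit the finite index of $G_0$ to reduce $N^G$ to a finitely generated object. Fix coset representatives $g_1,\dots,g_k$ for $G_0$ in $G$. Writing an arbitrary $g\in G$ as $g=g_i h$ with $h\in G_0$, and using $hNh^{-1}=N$ (because $N\unlhd G_0$), one obtains $gNg^{-1}=g_iNg_i^{-1}$. Hence the distinct conjugates of $N$ are among the finitely many subgroups $N_i:=g_iNg_i^{-1}$, and $N^G=\langle N_1,\dots,N_k\rangle$.

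Next I would verify that each $N_i$ is again a nilpotent normal subgroup of $G_0$. Nilpotence is immediate, since $N_i$ is conjugate to $N$. For normality, using $G_0\unlhd G$, for $x\in G_0$ the element $g_i^{-1}xg_i$ again lies in $G_0$, so that $xN_ix^{-1}=g_i\big((g_i^{-1}xg_i)N(g_i^{-1}xg_i)^{-1}\big)g_i^{-1}=g_iNg_i^{-1}=N_i$, invoking $N\unlhd G_0$. In particular each $N_i\subseteq G_0$, whence $N^G\subseteq G_0$.

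Finally, Fitting's theorem, applied inside $G_0$ and iterated over the finitely many factors $N_1,\dots,N_k$, shows that $N^G=\langle N_1,\dots,N_k\rangle$ is nilpotent. Being a nilpotent normal subgroup of $G$, it is contained in $\Fit(G)$, and therefore so is $N$. The only delicate point is the bookkeeping of the middle two paragraphs — collapsing to finitely many conjugates and checking that each remains normal in $G_0$ — which is precisely where both hypotheses ($N\unlhd G_0$ and $G_0\unlhd G$) are genuinely used; the nilpotence of the join is then a purely formal consequence of Fitting's theorem.
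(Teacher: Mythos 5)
Your proof is correct and follows essentially the same route as the paper's: both reduce to the finitely many $G$-conjugates of $N$ (all lying in $G_0$ because $G_0\unlhd G$) and invoke Fitting's theorem to see that the subgroup they generate is a nilpotent normal subgroup of $G$ containing $N$. You merely spell out the details (coset representatives, normality of each conjugate in $G_0$) that the paper leaves implicit.
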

\begin{proof}
Let $N\unlhd G_0$ be nilpotent. Since $G_0$ is normal and of finite index in $G$, the subgroup $N$ has finitely many conjugate subgroups in $G$ and all of them are contained in $G_0$. By Fitting's theorem, the subgroup $M$ generated by such conjugates is nilpotent, and it is normal in $G$. Hence $N\le M\le  \Fit(G)$. 
\end{proof}

Note that in the previous situation $\Fit(G)$ contains in particular the last non-trivial term of the derived series of $G_0$.
\begin{dfn}
Let $G$ be a virtually solvable group. The \emph{upper Fitting series} of $G$ is defined by setting $F_1=\Fit(G)$, and $F_{i+1}$ to be the preimage of $\Fit(G/F_i)$ in $G$. 
Since $G$ is virtually solvable, there must exist some $k\in \N$ such that $G/F_k$ is finite. We call the smallest such $k$ the \emph{virtual Fitting length} of $G$ and denote it by $\vf(G)$. 
\end{dfn}

\subsection{The group $\Z\wr\Z$ as an illustrative example} \label{sec ejemplo lamp lighter}
Throughout the paper, we will frequently use the example of the group $\Z \wr \Z$. To get the reader immediately familiar with this example, in this subsection we describe three natural families of actions of the group $\Z\wr\Z$ on the real line by orientation-preserving homeomorphisms. Actions in the first two families are semi-conjugate to affine actions, while actions in the third family are what we call \emph{Plante-like} actions,  described in  \S \ref{ssc.plante}.  We will see later in Proposition \ref{p-ZwrZ} that, up to semi-conjugacy, all the possible actions of $\Z \wr \Z$ on the line arise through the constructions presented here.  

Throughout this subsection we set $\Z \wr \Z=L\rtimes \Z$, where $L=\bigoplus_\Z \Z$ is the group of finitely supported configurations (functions) $f\colon \Z\to \Z$, and $\Z$ acts on $L$ by shifting the indices. We fix the generating set $\{g,h_0\}$ of $\Z\wr\Z$, where $g$ generates the factor $\Z$, and $h_0\in L$ is the configuration given by $h_0(0)=1$ and $h_0(k)=0$ for $k\neq 0$, and denote by $h_n=g^nh_0g^{-n}$ ($n\in \Z$) the elements given by $h_n(n)=1$ and $h_n(k)=0$ for $k\neq n$.

The group $\Z\wr\Z$ is solvable (more precisely, metabelian) and it is clear that $L$ agrees with $\Fit(\Z\wr\Z)$, the Fitting subgroup of $\Z\wr\Z$. Moreover, $\Z\wr\Z$ admits the  infinite presentation
\begin{equation}\Z\wr\Z\cong\langle g, h_0 \mid h_n=g^nh_0g^{-n}, \, [h_n, h_m]= \id \quad (n, m\in \Z)\rangle. \label{e-presentation-ZwrZ}\end{equation}

\subsubsection{Faithful actions semi-conjugate to cyclic ones} \label{s-cyclic-lamplighter}
Perhaps the simplest way of producing faithful actions  $\varphi\colon\Z\wr\Z\to \homeo_0(\R)$ is to let $\varphi(g)$ be a homeomorphism without fixed points, and $\varphi(h_0)$ any non-trivial homeomorphism whose support is contained in an open interval $D$ such that  $\varphi(g)(D)\cap D\neq \varnothing$\footnote{As observed by Navas in \cite{NavasResolubles},  by iterating this method one can produce $C^\infty$ actions of the groups $H_{n+1}:=H_n\wr \Z$ on the line, recursively defined after setting $H_1=\Z$. Note that $H_n$ is solvable of derived length $n$.}. In this way, $\varphi(h_n)$ is supported on $\varphi(g^n)(D)$ and hence the $\varphi(h_n)$ pairwise commute, generating an isomorphic copy of $L$, and we have $\langle \varphi(g),\varphi(h_0)\rangle\cong \Z\wr\Z$. These actions, however, are all semi-conjugate to a cyclic action (in particular they are semi-conjugate to an affine action). This is a consequence of the fact that every such action admits  a closed invariant subset, namely the complement of $\bigcup_n \varphi(g^n)(D)$,  on which $\varphi(h_0)$ acts trivially (so the action on it  is generated by $\varphi(g)$). 
For instance, in the case $\varphi(g)(x)=x+1$ and $D=(0,1)$, it is straightforward to check that  the ``closest lower integer'' function $h\colon x\mapsto\max \{n\in \Z:n\le x\}$ semi-conjugates $\varphi$ to the cyclic action $\psi$ given by  $\psi(g)\colon x\mapsto x+1$ and $\psi(h_0)=\id$.

\subsubsection{Affine actions} 
Another way of producing actions of $\Z\wr\Z$ on the line is by considering affine maps
\[\varphi(g) \colon x\mapsto \lambda x+\beta , \quad \varphi(h_0)\colon x\mapsto x+  \alpha  \quad \quad (\lambda>0\text{ and } \alpha, \beta \in\R).\]
The reader can check that for any choice of $\lambda,\alpha,\beta$ as above, the images of the generators satisfy the relations from  \eqref{e-presentation-ZwrZ},   and hence the action $\varphi\colon\Z\wr\Z\to \homeo_0(\R)$ is well defined. These affine actions can be readily classified up to conjugacy.

\begin{prop} \label{l-wr-affine} Let $\{g,h_0\}$ be the generating set for $\Z\wr\Z$ as above. Then every irreducible affine action $\varphi\colon \Z \wr \Z\to \Aff(\R)$ is positively conjugate by an affine map to an action in one of the following families:
\begin{itemize}
\item  actions by translations obtained by setting 
\[\varphi(h_0)\colon x\mapsto x+\alpha, \quad \varphi(g)\colon x\mapsto x +\beta \quad \quad  (\alpha^2+\beta^2=1);\]
\item non-abelian affine actions obtained by setting
\[\varphi(h_0)\colon x\mapsto x\pm 1, \quad \varphi(g) \colon x\mapsto \lambda x\quad \quad (\lambda>0).\]
\end{itemize}
\end{prop}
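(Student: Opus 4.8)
The plan is to exploit that $L=\Fit(\Z\wr\Z)$ is abelian together with the standard description of abelian subgroups of $\Aff(\R)$, and then to use irreducibility to pin down the normal forms. First I would record the elementary fact that any abelian subgroup $H\le\Aff(\R)$ is of exactly one of two types: either every element of $H$ is a translation, or all elements of $H$ share a common fixed point. Indeed, if some $f\in H$ is not a translation, then $f\colon x\mapsto cx+d$ with $c\neq 1$ has a unique fixed point $p$; for any $h\in H$ commuting with $f$ the point $h(p)$ is again fixed by $f$, so $h(p)=p$, and hence $p$ is fixed by all of $H$. Applying this to $H=\varphi(L)$ (abelian because $L$ is), the image $\varphi(L)$ either consists of translations, or has a common fixed point and contains a nontrivial homothety.

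The key step is to rule out the second alternative using irreducibility. Suppose $\varphi(L)$ is not a group of translations; then it has a common fixed point and contains a non-translation $f$, whose fixed-point set is a single point $p$, so the common fixed-point set of $\varphi(L)$ is exactly $\{p\}$. Since $L$ is normal, $\varphi(g)$ conjugates $\varphi(L)$ into itself and therefore preserves its common fixed-point set, forcing $\varphi(g)(p)=p$. But then $p$ is fixed by the whole group, contradicting irreducibility. Hence $\varphi(L)$ is a group of translations; in particular $\varphi(h_0)\colon x\mapsto x+d$ for some $d\in\R$, and the relation $h_n=g^nh_0g^{-n}$ is automatically compatible.

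Then I would split according to the linear part $a>0$ of $\varphi(g)\colon x\mapsto ax+b$. If $a=1$, both generators are translations, the action is abelian, and conjugating by the homothety $x\mapsto x/\sqrt{d^2+b^2}$ (with $(d,b)\neq(0,0)$ by irreducibility) rescales them into the first family with $\alpha^2+\beta^2=1$. If $a\neq 1$, then irreducibility forces $d\neq 0$ (otherwise $\varphi(h_0)=\id$ and the fixed point of $\varphi(g)$ would be global), so the action is non-abelian; here I conjugate first by the translation carrying the fixed point of $\varphi(g)$ to $0$ (turning $\varphi(g)$ into $x\mapsto ax$ while keeping $\varphi(h_0)$ a translation), and then by the positive homothety $x\mapsto x/|d|$ (which fixes $0$, hence preserves $x\mapsto ax$) to normalize the translation to $x\mapsto x\pm 1$, landing in the second family with $\lambda=a$.

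The only genuinely substantive step is the irreducibility argument eliminating the common-fixed-point case; everything else is bookkeeping with affine normal forms. One subtlety worth flagging is that we are restricted to orientation-preserving (positive) affine conjugacies: this is precisely why the sign in $x\mapsto x\pm 1$ cannot be removed, as the two signs correspond to $d>0$ and $d<0$ and are interchanged only by the orientation-reversing map $x\mapsto -x$.
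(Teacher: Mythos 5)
Your proof is correct and follows essentially the same route as the paper: both arguments use commutativity inside $L$ together with irreducibility to force $\varphi(h_0)$ to be a translation (the paper phrases this as a direct contradiction between the commuting homotheties $\varphi(h_0)$ and $\varphi(h_1)$ having distinct fixed points, while you phrase it via the common-fixed-point dichotomy for abelian subgroups of $\Aff(\R)$ and normality of $L$ -- the same mechanism in slightly different packaging), and the remaining normal-form bookkeeping is identical. Your remark about the sign $\pm 1$ being irremovable under orientation-preserving conjugacy is a correct and worthwhile clarification of a point the paper's proof glosses over.
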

\begin{proof}
Let $\varphi$ be an irreducible affine action of $\Z\wr\Z$. First we show that $\varphi(h_0)$ is a translation. Assume by contradiction that $\varphi(h_0)$ is a homothety, with fixed point $p$. Then $p$ cannot be fixed by $\varphi(g)$ or it would be a global fixed point. Thus $\varphi(h_1)$ is a homothety whose fixed point is $\varphi(g)(p)\neq p$. This contradicts that $\varphi(h_0)$ and $\varphi(h_1)$  commute. Thus $\varphi(h_0)$ is a translation (possibly trivial). If $\varphi(g)$ is also a translation, up to conjugation by an affine map we are in the first case of the classification. If $\varphi(g)$ is a homothety, then $\varphi(h_0)$ cannot be trivial, and upon conjugating by an affine map we can assume that $\varphi(h)$ is a translation by $1$ and that $\varphi(g)$ fixes 0, which yields the second case of the classification.
\end{proof}

\begin{rem} 
In the previous classification, note that  non-abelian affine actions are faithful if and only if  $\lambda$ is a transcendental number (since the translations $\varphi(h_n)(x)=x+\lambda^n$ generate a free abelian group precisely when $\lambda $ is transcendental). Actions by translations obviously descend to actions of the abelianization of $\Z \wr \Z$, which is isomorphic to $\Z^2$. The abelianization acts faithfully provided $\alpha$ and $\beta$ are rationally independent. 
\end{rem}

 \subsubsection{Plante-like actions} 
\label{ssc.plante}  Finally, we describe a procedure for producing actions of $\Z\wr\Z$ on the line that are not semi-conjugate to any (real) affine action. Up to positive conjugacy, there will be four such actions (two, if we also allow negative conjugacies) which are what we call  \emph{Plante-like actions} of $\Z\wr\Z$. One of these actions was first considered by Plante in \cite{Plante}, who constructed it by explicitly defining two homeomorphisms of the line. Following the discussion in our previous work  \cite[Example 8.1.8]{BMRT}, we shall take a different point of view, and obtain such actions as the dynamical realization of an affine action of $\Z\wr\Z$ on a countable ordered set (this point of view can be easily generalized to other wreath products of groups, as explained below).

Recall that we write $\Z \wr \Z=L\rtimes \Z=\langle g,h_0\rangle$, where $L=\bigoplus_\Z \Z$ and $\{ g, h_0\}$ from \eqref{e-presentation-ZwrZ} is our preferred generating set, and that we set $h_n=g^n h_0 g^{-n}$. 

One can identify the subgroup $L$ with the ring of Laurent polynomials $\Z[X,X^{-1}]$, choosing the identification so that $h_n\in L$  represents the polynomial $ X^n$ (in particular, $h_0$ represents the constant polynomial $1$). With this point of view, the group $\Z\wr\Z$ admits a natural action on $\Z[X,X^{-1}]$, where $L\cong \Z[X,X^{-1}]$ acts additively and the generator $g$ acts by multiplication by $X$. This action of $\Z \wr \Z$ on $\Z[X,X^{-1}]$ preserves four natural orders $\prec_{\max}^\pm$ and $\prec_{\min}^\pm$ of lexicographic type, obtained by looking at the sign of the coefficient of the monomial of maximal or minimal degree, respectively. More precisely, given a non-zero Laurent polynomial
$P(X)$,  we declare $0\prec_{\max}^+ P(X)$  if the coefficient of the highest power of $X$ in $P(X)$ is a non-negative integer, and $0\prec_{\max}^- P(X)$ if the coefficient of the highest power of $X$ in $P(X)$ is a non-positive integer.    Similarly, we declare $0\prec_{\min}^+ P(X)$ (respectively, $0\prec_{\min}^- P(X)$) if the coefficient of the lowest power of $X$ in $P(X)$ is a non-negative integer (respectively, non-positive integer). It is routine to check that these four orders are invariant under the action of $\Z \wr \Z$ defined above. By considering the dynamical realizations of the above actions, we obtain, up to positive conjugacy, four actions on the line which are what we call the \emph{Plante-like actions} of $\Z \wr\Z$.

Let us take a closer look at Plante-like actions. Let $\prec$ be one of the orders $\prec_{\max}^\pm, \prec_{\min}^\pm$ on $\Z[X, X^{-1}]$, and $\varphi\colon G\to \homeo_0(\R)$ the corresponding Plante-like action. On the one hand, by definition of the lexicographic order $\prec$, for every $h\in L$ we have that every $h$-orbit in $(\Z[X,X^{-1}], \prec)$ is bounded. For example, in the case of $\prec_{\max}^+$, the $h$-orbit of any $Q\in \Z[X, X^{-1}]$ is contained in the interval $(-X^m, X^m)$ whenever $m$ is any integer strictly larger than the degree of $Q$ and of the polynomial representing $h$.  However, the global action of the group $L$ on $\Z[X, X^{-1}]$ is transitive. This implies that for the Plante-like action $\varphi$, the group $L$ acts without fixed points (in particular, $\varphi$ is irreducible), while every $h\in L$ has infinitely many fixed points. This immediately prevents the action $\varphi$ to be semi-conjugate to any affine action (cf.\  Proposition \ref{l-wr-affine}).

Consider now the action of the generator $g$ on $(\Z[X, X^{-1}], \prec)$.  Since $g$ acts by multiplication by $X$,  it has exactly one fixed point (the polynomial $0$). Suppose, for definiteness, that $\prec$ is one of the orders $\prec_{\max}^\pm$. Then  for any  Laurent polynomial  $P(X)\succ 0$, we have  $g\cdot P(X)=XP(X)\succ P(X)$, and moreover $g^n\cdot P(X)=X^nP(X)$ is arbitrarily large as $n\to +\infty$, and arbitrarily close to $0$ as $n\to -\infty$ (this easily follows from the definition of the lexicographic order, since applying $g^n$ to $P(X)$ shifts the largest power of $X$ by $n$, without changing the corresponding coefficient). Similarly for $P(X)\prec 0$, we have that $g^n\cdot P(X)$ gets arbitirarily small as $n\to +\infty$, and arbitrarily close to $0$ as $n\to -\infty$. We deduce that in the Plante-like action, $\varphi(g)$ is an expanding homothety (in the sense of Definition \ref{d-expanding-homothety}). Similarly one shows that $\varphi(g)$ is a contracting homothety in the case where $\prec$ is $\prec_{\min}^\pm$. We can (and will) assume that the unique fixed point of $\varphi(g)$ is the origin of the real line. In Figure  \ref{fig Plante} the Plante-like action corresponding to $\prec_{\max}^+$ is depicted.

\begin{figure}[ht]
     \includegraphics[width=0.5\textwidth]{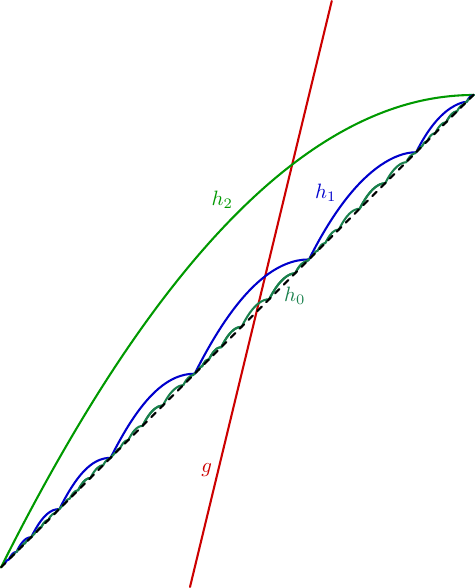}
      \caption{ Plante-like action corresponding to the order $\prec_{\max}^+$}
      \label{fig Plante}
\end{figure}

After these considerations, we can prove the following. 

\begin{prop} A Plante-like action of $\Z\wr\Z$  is minimal. 
\label{prop Plante-like minimal}
\end{prop}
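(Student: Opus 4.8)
The plan is to prove minimality directly, by showing that $\R$ is the only non-empty closed $\varphi$-invariant subset. The argument will exploit two features established in the discussion preceding the statement: that the subgroup $L=\bigoplus_\Z\Z\cong\Z[X,X^{-1}]$ acts \emph{transitively} on $\Omega=\Z[X,X^{-1}]$, and that $\varphi(g)$ is a homothety with a unique fixed point (expanding for the orders $\prec_{\max}^\pm$, contracting for $\prec_{\min}^\pm$).

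First I would record that, since $L$ acts on $\Omega=\Z[X,X^{-1}]$ by addition, any two polynomials $P,Q$ are exchanged by the translation $Q-P\in L$; hence $\Omega$ is a single $L$-orbit, and \emph{a fortiori} a single $G$-orbit. Writing $t\colon\Omega\to\Q$ for the order isomorphism used to build the dynamical realization $\varphi=\bar\varphi$, this says that $t(\Omega)=\Q$ is a single $\varphi$-orbit inside $\R$, and of course it is dense. Recall also that the unique fixed point of $\varphi(g)$ is the origin $0$, which is the image $t(P_0)$ of the zero polynomial $P_0\in\Omega$, so in particular $0\in t(\Omega)=\Q$.

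Now let $F\subseteq\R$ be a non-empty closed $\varphi$-invariant set. I split into two cases. If $F\cap\Q\neq\varnothing$, then $F$ contains the full $\varphi$-orbit $t(\Omega)=\Q$, which is dense, so $F=\R$. The remaining case is $F\cap\Q=\varnothing$; I claim it cannot occur. Indeed, since $0\in\Q$ we have $0\notin F$, so any $p\in F$ is distinct from the fixed point $0$ of $\varphi(g)$. For the orders $\prec_{\max}^\pm$, the map $\varphi(g)$ is an expanding homothety, whence the backward iterates $\varphi(g)^{-n}(p)$ converge to $0$ as $n\to+\infty$; as $F$ is closed and $g$-invariant, this forces $0\in F$, contradicting $F\cap\Q=\varnothing$. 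For $\prec_{\min}^\pm$ one argues identically, using that the contracting homothety $\varphi(g)$ has forward iterates $\varphi(g)^{n}(p)\to 0$. Hence $F=\R$ in all cases, and $\varphi$ is minimal.

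I do not expect a genuine obstacle here, since the argument is short; the only point requiring care is to make sure the unique fixed point of $\varphi(g)$ really lies in $t(\Omega)$, so that the contradiction with $F\cap\Q=\varnothing$ is legitimate. This is exactly what guarantees the homothety argument closes: the zero polynomial is the unique $g$-fixed point in $\Omega$, and the expanding (resp.\ contracting) homothety $\varphi(g)$ has a unique fixed point on $\R$, which must therefore be $t(P_0)=0\in\Q$. It is enough to treat one order of each type, the other two being symmetric.
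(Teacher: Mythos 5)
Your proof is correct and rests on exactly the same two ingredients as the paper's: the transitivity of $L$ on $t(\Omega)=\Q$, and the fact that $\varphi(g)$ is a homothety whose unique fixed point is $0=t(0)\in\Q$. The only difference is packaging — the paper shows every orbit is dense by translating a given open interval over $0$ and then expanding it with powers of $\varphi(g)$ until it swallows an arbitrary point, whereas you run the dual argument, contracting a point of a putative proper closed invariant set onto $0$ under backward iterates of $\varphi(g)$ — so this is essentially the paper's proof.
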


\begin{proof} Let $x\in \R$ be a point and $I\subset \R$ an open interval. We need to check that the orbit of $x$ meets the interval $I$.
Since $I$ is open, it follows that it contains rational points and hence points in the image of the order-preserving bijection $t\colon \Z[X,X^{-1}]\to \Q$  from which the dynamical realization is built (see Definition \ref{def dynamical realization} and the discussion before it). In particular, since $L$ acts transitively on the image of $t$, there is $h\in L$ such that $\varphi(h)(I)$ contains $0\in \R$. Now, recall that we are assuming that the origin $0$ is the unique fixed point of $\varphi(g)$, which moreover acts as a homothety (expanding or contracting, depending on the order considered on $\Z[X,X^{-1}]$). It follows that there is $n\in \Z$ such that $\varphi(g^nh)(I)$ contains the point $x$, and hence $\varphi(h^{-1}g^{-n})(x)$ belongs to $I$, as desired.
\end{proof}

\section{Generalities on laminar actions and horogradings} 
\label{sec generalities}

In this section we recall general facts on laminar actions and horogradings. Large parts of the section are an exposition of the concepts in our previous work \cite[Chapters 8 and 11]{BMRT} (although we sometimes provide self-contained proofs for  the reader's convenience). The main new ingredients are the existence of minimal laminations for laminar actions of finitely generated groups (Proposition \ref{prop.minimalamination}), and the corresponding concept in terms of actions on directed trees (Proposition \ref{prop.minimalmodeltree}).

\subsection{Laminations and horogradings}

We say that two open intervals $I,J\subset \R$  \emph{do not cross} if they are either nested or disjoint, that is either $I\subset J$, or $J\subset I$, or $I\cap J=\varnothing$.  A \emph{prelamination} of the real line is a collection $\mathcal{L}$ of (non-empty) bounded open  intervals, that pairwise do not cross. We shall identify the set of all bounded open intervals in $\R$  with the space $\R^{(2)}:=\{(x, y)\in \R^2: x<y\}$, endowed with the topology induced from $\R^2$ (i.e.\ convergence of endpoints). A \emph{lamination} is a prelamination which is moreover a closed subset of $\R^{(2)}$. A lamination is \emph{covering} if it defines a cover of $\R$; equivalently, if it contains an increasing exhaustion of $\R$. Elements of a (pre)laminations will be often called \emph{leaves}.
 
Note that a prelamination $\mathcal{L}$ is naturally a partially ordered set (\emph{poset}, for short) with respect to the inclusion order $\subseteq$. Given $l_0, l_1\in \mathcal{L}$, we will say that $l_1$ is the successor of $l_0$ if it is the smallest element of $(\mathcal{L}, \subseteq)$ such that $l_0\subsetneq l_1$.

\begin{rem}
	If an action $\varphi \colon G \to \homeo_0(\R)$ has an invariant prelamination $\mathcal{L}$, then the closure $\overline{\mathcal L}$ in $\R^{(2)}$ is an invariant lamination.  It is convenient to visualize a prelamination of the line as a collection of pairwise disjoint semi-circles in the upper half-plane (i.e.\ geodesic lines in the Poincaré metric), whose endpoints define the leaves of the prelamination.
\end{rem}

\begin{dfn}\label{def.lamination}
An irreducible action $\varphi\colon G\to\homeo_0(\R)$ is \emph{laminar} if it preserves a covering prelamination $\mathcal{L}$.  We also say that the laminar action $\varphi$ is \emph{focal} if, in addition, there exists $l\in\mathcal{L}$  and a sequence $(g_n)\subset G$ such that  $(\varphi(g_n)(l))$ is an  increasing  exhaustion of the line.  When we want to emphasize the role of the prelamination, we will say that $\varphi$ is laminar (and focal) \emph{with respect to $\Lcal$}.
\end{dfn}

\begin{rem} 
 The property of being a laminar (focal) action is invariant under semi-conjugacy, and every focal laminar action is semi-conjugate to a \emph{minimal} focal laminar action \cite[Proposition 8.1.14]{BMRT}.
\end{rem}
\begin{rem}\label{r-minimal-implies-focal}
Conversely, assume that $\varphi\colon G\to\homeo_0(\R)$ is a {minimal} action. If $\mathcal{L}$ is any  non-empty $\varphi$-invariant prelamination, then $\mathcal{L}$ is automatically covering, and $\varphi$ is focal with respect to $\mathcal{L}$ \cite[Proposition 8.1.15]{BMRT}.
\end{rem}

Laminar actions satisfy some general constraints. In particular group elements can be classified according to two types of dynamics.  For this, we recall the following terminology. 

\begin{dfn} \label{d-expanding-homothety}
	A homeomorphism $h\in\homeo_0(\R)$ is an \emph{expanding pseudo-homothety} if there exists a compact subset $K\subset \R$ such that $h(U)\supset \overline{U}$ for every open subset $U\supset K$.
	When $h^{-1}$ is an expanding pseudo-homothety, we say that $h$ is a \emph{contracting pseudo-homothety}. In the case where $K$ can be taken to be a point, we simply say $h$ is an (expanding or contracting) \emph{homothety}. We also say that $h\in \homeo_0(\R)$ is \emph{totally bounded} if $\Fix(h)$ accumulates on both $+\infty$ and $-\infty$.
\end{dfn}
It is shown in \cite[Proposition 8.1.13]{BMRT} that, for a laminar action $\varphi \colon G\to \homeo_0(\R)$, every element in $\varphi(G)$ is either a pseudo-homothety or  totally bounded.

We now recall the notion of horograding from \cite{BMRT}.
Let $\mathcal{L}$ be a prelamination of the real line.  A   \emph{horograding} of $\mathcal{L}$ is a monotone map $\hor \colon (\mathcal{L}, \subseteq) \to (\R, \leq )$, namely a map such that $\hor(l_1)\leq \hor(l_2)$ whenever $l_1\subseteq l_2$ (in this case we say that $\hor$ is a \emph{positive} horograding) or $\hor(l_1)\geq \hor(l_2)$ whenever $l_1\subseteq l_2$ (in this case we say that $\hor$ is \emph{negative}). In the presence of a group action, this leads to the following notion.

\begin{dfn}
Let $\varphi\colon G\to \homeo_0(\R)$ be a laminar action, and $j\colon G\to \homeo_0(\R)$ an irreducible action. A (positive or negative) \emph{horograding of $\varphi$ by $j$} is a pair $(\mathcal{L}, \hor)$ consisting of a $\varphi$-invariant covering lamination, and a  (positive or negative, accordingly) horograding $\hor\colon (\mathcal{L}, \subseteq) \to \R$ such that $\hor(\varphi(g)(l))=j(g)(\hor(l))$, for every $g\in G$ and $l\in \mathcal{L}$.
\end{dfn}

\begin{rem}[Lemma 8.2.7 in \cite{BMRT}] \label{r-horograding-proper}
Suppose that $(\mathcal{L}, \hor)$ is a positive horograding of a laminar action $\varphi\colon G\to \homeo_0(\R)$ by an irreducible action $j\colon G\to \homeo_0(\R)$. Then for every increasing sequence $(l_n)\subset \mathcal{L}$ that exhausts $\R$,  we must have $\hor(l_n)\to +\infty$. Indeed, since $j$ is irreducible and $\hor(\mathcal{L})$ is $j$-invariant, for every $M>0$ there exists $l\in \mathcal{L}$ such that $\hor(l)\ge M$, and since  $l_n$ eventually contains any given $l$, the conclusion follows. 
\end{rem}

The existence of a horograding of a laminar action $\varphi$ by an action $j$ implies that the action $j$ retains information on the large-scale behavior of $\varphi$. In particular, the type of each element with respect to $\varphi$ is determined by $j$ as follows.

\begin{prop}[Proposition 8.2.10 in \cite{BMRT}]\label{p-classification-elements} Let $\varphi\colon G\to\homeo_0(\R)$ be a minimal laminar action positively horograded by $j\colon G\to\homeo_0(\R)$. For any element $g\in G$, we have the following.
\begin{enumerate}
\item \label{i-pseudo-homothety} If $\Fix(j(g))$ does not accumulate on $+\infty$, then $\varphi(g)$ is a pseudo-homothety, which is expanding if $j(g)(x)>x$ for any sufficiently large $x$, and contracting otherwise. 
\item Otherwise, $\varphi(g)$ is totally bounded.
\end{enumerate}
	Moreover, in the former situation, if $\Fix(j(g))=\emptyset$ then $\varphi(g)$ is a homothety. 
\end{prop}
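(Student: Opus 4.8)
The plan is to reduce everything to a comparison of germs at $+\infty$ and to move information between $j(g)$ and $\varphi(g)$ through the defining equivariance $\hor(\varphi(g)(l))=j(g)(\hor(l))$. The organizing observation is a double trichotomy. By \cite[Proposition 8.1.13]{BMRT} the element $\varphi(g)$ is either an expanding pseudo-homothety, a contracting one, or totally bounded, and these are distinguished by the germ of $\varphi(g)$ at $+\infty$: an expanding pseudo-homothety satisfies $\varphi(g)(x)>x$ for all large $x$ (and $\varphi(g)(x)<x$ for $x\ll 0$), a contracting one the reverse, while a totally bounded element has fixed points accumulating at $+\infty$. On the target side the hypotheses are the matching trichotomy for $j(g)$: either $\Fix(j(g))$ is bounded above with $j(g)>\id$ on a ray $(M,+\infty)$, or bounded above with $j(g)<\id$ there, or $\Fix(j(g))$ accumulates at $+\infty$. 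Since on both sides these are three mutually exclusive, exhaustive cases, it suffices to prove the three \emph{converse} implications ``type of $\varphi(g)$ $\Rightarrow$ case of $j(g)$''; the statement then follows by exhaustiveness.

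The heart is the expanding case. Assume $\varphi(g)(x)>x$ for $x>R$ and $\varphi(g)(x)<x$ for $x<-R$. Any leaf $l=(a,b)$ with $a<-R<R<b$ then satisfies $\varphi(g)(l)\supsetneq l$, hence $j(g)(\hor(l))=\hor(\varphi(g)(l))\ge \hor(l)$ by order-preservation; and such leaves have cofinal horograding, since the chain of all leaves containing $[-R,R]$ exhausts $\R$ and thus has $\hor\to+\infty$ along it by Remark \ref{r-horograding-proper}. So $j(g)(t)\ge t$ for arbitrarily large $t\in\hor(\mathcal L)$. To rule out a large fixed point: if $j(g)(t_0)=t_0$ with $t_0=\hor(l_0)$, then since $\varphi(g)$ is expanding the iterates $\varphi(g^n)(l_0)$ have endpoints escaping to $-\infty$ and $+\infty$, hence form an increasing exhaustion of $\R$, forcing $\hor(\varphi(g^n)(l_0))=j(g^n)(t_0)=t_0\to+\infty$ by Remark \ref{r-horograding-proper}, absurd. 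Hence $\Fix(j(g))$ is bounded above and $j(g)>\id$ near $+\infty$. The contracting case follows by applying this to $g^{-1}$.

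For the totally bounded case I argue by contradiction: suppose $\varphi(g)$ is totally bounded but $\Fix(j(g))$ is bounded above, say (replacing $g$ by $g^{-1}$ if needed) $j(g)>\id$ on $(M,+\infty)$. Pick a fixed point $p$ of $\varphi(g)$ (every element of a laminar action has one, \cite[Lemma 8.1.9]{BMRT}); the leaves containing $p$ exhaust $\R$, so one may choose such a leaf $l_*\ni p$ with $\hor(l_*)>M$. All iterates $\varphi(g^n)(l_*)$ contain $p$, hence are pairwise nested, and $\hor(\varphi(g^n)(l_*))=j(g^n)(\hor(l_*))$ is strictly increasing, so they form a strictly increasing nested sequence. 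Since $\varphi(g)$ is totally bounded it has fixed points $q'<\inf l_*$ and $q>\sup l_*$; a fixed point inside some $\varphi(g^n)(l_*)$ would be pulled back into $l_*$, which is impossible, so the endpoints stay in $(q',q)$ and the nested union converges in $\R^{(2)}$ to a genuine leaf $l_\infty\in\mathcal L$ (the lamination being closed). Then $\hor(l_\infty)\ge \hor(\varphi(g^n)(l_*))=j(g^n)(\hor(l_*))\to+\infty$, absurd. This gives the converse in the totally bounded case and completes the three implications, hence parts (1) and (2).

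Finally, for the ``moreover'' clause, if $\Fix(j(g))=\emptyset$ then $\varphi(g)$ is a pseudo-homothety, say expanding, with at least one fixed point; were there two fixed points $p_1<p_2$, the bounded interval $(p_1,p_2)$ would be $\varphi(g)$-invariant, so a leaf $l_0\subseteq(p_1,p_2)$ would remain, under iteration, nested inside a once-chosen leaf $m\supseteq[p_1,p_2]$, forcing $\hor(\varphi(g^n)(l_0))\le\hor(m)$; but $\hor(\varphi(g^n)(l_0))=j(g^n)(\hor(l_0))\to+\infty$ since $j(g)$ is fixed-point free, a contradiction. Hence $\varphi(g)$ has a unique fixed point and is a homothety. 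The recurring obstacle — and the main difficulty — is that $\hor$ is only weakly monotone and highly non-injective, so a fixed point of $j(g)$ does not directly yield a $\varphi(g)$-invariant leaf; the uniform device to circumvent this is to convert every statement about horograding values into a nesting configuration and then invoke either the closedness of $\mathcal L$ (to produce a limit leaf) or the properness Remark \ref{r-horograding-proper}. The most delicate point to secure rigorously is the existence of the auxiliary leaves used above, in particular a leaf contained in the gap $(p_1,p_2)$, which relies on leaves being arbitrarily small — a consequence of minimality and focality (Remark \ref{r-minimal-implies-focal}).
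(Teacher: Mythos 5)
The paper does not actually prove this proposition: it is imported verbatim from \cite{BMRT} (Proposition 8.2.10), so there is no in-house argument to compare yours against. Your overall architecture --- prove the three implications from the type of $\varphi(g)$ to the germ of $j(g)$ at $+\infty$, then conclude by mutual exclusivity and exhaustiveness of the two trichotomies --- is sound, and your treatment of the totally bounded case is correct. But two steps need repair. The main one is in the expanding case. Your exclusion of ``large fixed points'' of $j(g)$ only addresses fixed points of the form $t_0=\hor(l_0)$, which is not all of them; worse, the assertion that the iterates $\varphi(g^n)(l_0)$ exhaust $\R$ fails for a general leaf $l_0$ (a leaf lying entirely to the right of $\Fix(\varphi(g))$, or trapped between two of its fixed points, has a non-exhausting orbit) --- it holds only when $l_0\supseteq[-R,R]\supseteq\Fix(\varphi(g))$. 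And even for those leaves, knowing $j(g)(t)>t$ on a cofinal set of values $t$ does not bound $\Fix(j(g))$ from above. The correct assembly, using only ingredients already on your page, is: fix \emph{one} leaf $l_*\supseteq[-R,R]$; its iterates $\varphi(g^n)(l_*)$ form an increasing exhaustion of $\R$, so $j(g^n)(\hor(l_*))=\hor(\varphi(g^n)(l_*))\to+\infty$ by Remark~\ref{r-horograding-proper}; since $j(g)$ is order-preserving, any fixed point $p\ge \hor(l_*)$ would trap this orbit below $p$. Hence $\Fix(j(g))\subset(-\infty,\hor(l_*))$ and, there being no fixed point above $\hor(l_*)$ while $j(g)(\hor(l_*))>\hor(l_*)$, we get $j(g)>\id$ on $[\hor(l_*),+\infty)$.

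The second issue is the auxiliary claim, which you flag but do not prove, that there is a leaf contained in the gap $(p_1,p_2)$ between two fixed points of $\varphi(g)$. This does not follow from Remark~\ref{r-minimal-implies-focal}, which produces arbitrarily \emph{large} leaves, not small ones. The claim is true but requires an argument, e.g.: no leaf can be minimal for $\subseteq$ (its orbit would be a family of pairwise disjoint bounded open intervals whose union is open and invariant, hence equal to $\R$ by minimality, which is impossible by connectedness); therefore a maximal descending chain of leaves below any given leaf has intersection with empty interior, so its diameters tend to $0$ and the chain shrinks to a point; the set of points admitting leaves of arbitrarily small diameter around them is thus non-empty, and being closed and $\varphi$-invariant it equals $\R$ by minimality. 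With these two repairs (and the routine verification that an expanding pseudo-homothety with a single fixed point is a homothety), your proof is complete.
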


The following result establishes a more explicit relation between a laminar action and an action that horogrades it. Roughly speaking, it says that if $\varphi \colon G\to \homeo_0(\R)$ is horograded by $j\colon G\to \homeo_0(\R)$,  then there are locally defined semi-conjugacies at the level of germs near $\infty$. This statement is a simplified version of the discussion in \cite[\S 15.1]{BMRT} (which gives a more explicit conclusion assuming $G$ finitely generated); for the reader's convenience we give a self-contained proof.

\begin{prop}\label{p-horograding-germ}
Let $\varphi \colon G\to \homeo_0(\R)$ be a minimal laminar action, positively horograded by $j\colon G\to \homeo_0(\R)$. 
Then, there exist two maps $h_+, h_-\colon \R\to \R$ satisfying the following conditions:
\begin{enumerate}
	\item $h_+$ is monotone non-decreasing,
	\item $h_-$ is monotone non-increasing,
	\item \label{i-maps-proper} we have $\lim_{x\to +\infty}h_+(x)=\lim_{x\to -\infty} h_-(x)=+\infty$,
	\item \label{i-partial-equivariance} for every $g
	\in G$ there exists $M>0$ such that $ h_+(\varphi(g)(x))=j(g) ( h_+(x))$ for every $x>M$, and $ h_-(\varphi(g)(x))=j(g)( h_-(x))$ for every $x<-M$. 
\end{enumerate}
\end{prop}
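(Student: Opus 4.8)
The plan is to build $h_+$ directly from the horograding $\hor$, transporting the values of $\hor$ from leaves to points of $\R$ near $+\infty$; the map $h_-$ is then constructed symmetrically near $-\infty$. Fix a basepoint $x_0\in\R$ and a reference leaf $l_0=(a_0,b_0)\in\mathcal L$ containing $x_0$, which exists since $\mathcal L$ is covering. For $x\ge b_0$ I set
\[ h_+(x)=\sup\{\hor(l) : l=(\alpha,\beta)\in\mathcal L,\ \alpha<x_0<\beta\le x\}, \]
that is, the supremum of $\hor$ over all leaves that contain $x_0$ and whose closure lies to the left of $x$. Since all such leaves contain $x_0$, they are pairwise nested, so this is a supremum along a chain. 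I then extend $h_+$ to all of $\R$ by declaring it constant equal to $\hor(l_0)$ on $(-\infty,b_0)$; as $h_+(x)\ge\hor(l_0)$ for $x\ge b_0$ (because $l_0$ itself enters the defining set), this extension is non-decreasing, and since equivariance is only required for large $x$ it is harmless.

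First I would check the elementary properties. The value $h_+(x)$ is finite: by the covering property there is a leaf $l^{\ast}\ni x_0$ with right endpoint larger than $x$, and every leaf $l$ in the defining set, sharing the point $x_0$ with $l^{\ast}$, is nested with it and cannot contain it (its right endpoint is $\le x$), so $l\subseteq l^{\ast}$ and $\hor(l)\le \hor(l^{\ast})$. Monotonicity is clear, since enlarging $x$ enlarges the set over which the supremum is taken. For properness, note that the leaves containing $x_0$ form a chain whose union is all of $\R$ (any increasing exhaustion of $\R$ by leaves eventually contains $x_0$); applying Remark \ref{r-horograding-proper} to such an exhaustion gives $\hor\to+\infty$ along it, and each of these leaves enters the defining set once $x$ exceeds its right endpoint, so $h_+(x)\to+\infty$. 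The construction of $h_-$ is the mirror image, using leaves $(\alpha,\beta)$ with $x\le\alpha<x_0<\beta$, yielding a non-increasing map with $h_-(x)\to+\infty$ as $x\to-\infty$.

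The heart of the matter is the partial equivariance. Writing $x_1=\varphi(g)(x_0)$ and using that $\varphi(g)$ sends a leaf $(\alpha,\beta)$ to $(\varphi(g)\alpha,\varphi(g)\beta)$, that $x_0\in l$ if and only if $x_1\in\varphi(g)(l)$, together with $\hor(\varphi(g)l)=j(g)(\hor(l))$ and the continuity and monotonicity of $j(g)$, I obtain
\[ j(g)(h_+(x))=\sup\{\hor(l') : l'=(\alpha',\beta')\in\mathcal L,\ \alpha'<x_1<\beta'\le \varphi(g)(x)\}, \]
while by definition $h_+(\varphi(g)(x))=\sup\{\hor(l') : l'=(\alpha',\beta')\in\mathcal L,\ \alpha'<x_0<\beta'\le \varphi(g)(x)\}$. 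Thus the claim reduces to showing that these two suprema, over leaves through $x_1$ and through $x_0$ respectively, agree once $y:=\varphi(g)(x)$ is large. I would prove this by an order argument: fixing a leaf $l^{\ast}$ containing both $x_0$ and $x_1$, any leaf $l'\ni x_1$ with $\hor(l')>\hor(l^{\ast})$ must, being nested with $l^{\ast}$ and strictly higher in the inclusion order, contain $l^{\ast}$ and hence contain $x_0$; the same holds with the roles of $x_0,x_1$ exchanged. Since each supremum tends to $+\infty$ with $y$, for $y$ large the near-extremal leaves have $\hor>\hor(l^{\ast})$ and therefore contain both basepoints, so the two sets of leaves produce the same supremum. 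This gives $h_+(\varphi(g)(x))=j(g)(h_+(x))$ for all $x$ beyond some threshold $M$ depending on $g$, and the analogous computation near $-\infty$ handles $h_-$.

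The step I expect to be most delicate is precisely this last one: ensuring that the non-equivariant choice of basepoint $x_0$ becomes irrelevant at high horograding levels, which is what makes the formula equivariant in the germ at $+\infty$. Everything hinges on combining the order-preservation of $\hor$ with the fact that the chain of leaves through any point exhausts the line, so that sufficiently ``high'' leaves through one point automatically swallow any other fixed point.
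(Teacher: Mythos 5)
Your proof is correct, and the underlying idea is the same as the paper's: transport the horograding to the line along the totally ordered family of leaves over a fixed reference object, and show that the non-equivariance of that reference choice is swallowed at large scale. The implementations differ in a way worth noting. The paper fixes a reference \emph{leaf} $k$ and sets $h_+(x)=\hor(l_x)$, where $l_x$ is the \emph{smallest} leaf containing $k$ with $x\in\overline{l_x}$; equivariance then follows from the exact leaf-level identity $\varphi(g)(l_x)=l_{\varphi(g)(x)}$, proved by a purely combinatorial nesting argument valid for every $x$ outside the smallest leaf containing $k\cup\varphi(g)^{-1}(k)$. You instead fix a reference \emph{point} $x_0$ and take a supremum of $\hor$ over the leaves through $x_0$ lying to the left of $x$; you do not identify the extremizing leaves, only the suprema, and for that you invoke properness of $\hor$ along exhaustions (Remark \ref{r-horograding-proper}) a second time, to guarantee that the near-extremal leaves eventually dominate a fixed leaf $l^*$ containing both $x_0$ and $\varphi(g)(x_0)$ and hence pass through both basepoints. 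Both routes are sound: the paper's identity is slightly sharper, yielding an explicit compact set outside of which equivariance holds with no appeal to properness, while your sup formulation trades that precision for a softer argument carried out at the level of values rather than leaves.
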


\begin{proof}  Let $(\mathcal{L}, \hor)$ be a positive horograding of $\varphi$ by $j$.  Fix a leaf ${k}:=(a, b)\in \mathcal{L}$, and set $\mathcal{F}=\{l\in \mathcal{L}: l\supseteq k\}$.  Note that $\mathcal{F}$ is a totally ordered closed subset of $\mathcal{L}$ (with respect to the inclusion order $\subset$), so for any point $x\notin {k}$ we can define
 \[l_x=\min\{l\in \mathcal{F}: x\in \overline{l}\}.\] 
 The map $x\in \R\mapsto l_x\in (\mathcal{L}, \subset)$ is non-decreasing on $[b, +\infty)$ and  non-increasing on  $(-\infty, a]$. (For example, suppose that $x < y$ are in $[b, +\infty)$. Then the leaf $l_y$ contains both the leaf $k=(a, b)$ and the point $y$, thus it actually contains the interval $(a, y)$; in particular $x\in l_y$, so $l_x\subseteq l_y$).   By postcomposing it with the horograding $\hor$, we obtain a non-increasing map $h_+\colon [b, +\infty)\to \R$ and a non-decreasing map $h_-\colon (-\infty, a]\to \R$, given by  $h_\pm(x)= \hor(l_x)$.
 To have maps defined on the whole line (as in the statement), we can extend each map $h_\pm$ arbitrarily to a monotone map $h_\pm \colon \R \to \R$  (this is possible since the image of both maps is bounded below by $\hor({k})$).  As $x\to \pm \infty$, the leaves $l_x$ are arbitrarily large in $(\mathcal{L}, \subseteq)$, so the maps $h_\pm$ satisfy \eqref{i-maps-proper}, see Remark \ref{r-horograding-proper}.
 
We now proceed to show that the maps $h_\pm$ satisfy \eqref{i-partial-equivariance}. We will write here $g.x$ for $\varphi(g)(x)$, and similar shorthand notation, for sake of readability.
Fix $g\in G$. Since $\mathcal{L}$ is a closed subset of $\R^{(2)}$, the subset of leaves that contain ${k}\cup g^{-1}.k$ has a smallest element. Denote this leaf by $u_g$. 

 \begin{claim}
 For $x\notin \overline{u}_g$, the points $x$ and $g.x$ belong to the same connected component of $\R\setminus \overline{k}$, and we have $g.l_x=l_{g.x}$.
 \end{claim}
 \begin{proof}[Proof of claim]
The assumption $x\notin \overline{u}_g\supset k$ implies in particular $x\notin k$, and since $g.\overline{u}_g\supset {k}$, the point $g.x$ must belong to the same connected component of $\R\setminus \overline{k}$ as $x$. In particular both leaves $l_x$ and   $l_{g.x}$ are well defined. Now, on the one hand $g.l_x$ contains $g.x$ in its closure, and also contains $g.u_g\supseteq k$, so $g.l_x\supseteq l_{g.x}$. On the other hand, the leaves $l_{g.x}$ and $g.u_g$ intersect non-trivially (as both contain $k$), so must be related by inclusion. Since $g.x$ is in the closure of $l_{g.x}$, but not in the closure of $g.u_g$, the only possible inclusion between them is $g.u_g\subseteq l_{g.x}$. Hence $g^{-1}.l_{g.x}$ is a leaf containing $u_g\supseteq k$, and also containing $x$ in its closure, so $g^{-1}.l_{g.x}\supseteq l_{x}$, i.e.\ $l_{g.x}\supseteq g.l_x$. We have shown that $g.l_x=l_{g.x}$.
\end{proof}
 
From the claim we deduce that, if $x\notin \overline{u}_g$, then 
 \[h_\pm (g.x) =\hor(l_{g.x})=\hor(g.l_x)=j(g)(\hor(l_x))=j(g)(h_{\pm}(x)),\]
showing \eqref{i-partial-equivariance}.
\end{proof}

\subsection{The example of Plante-like actions} \label{s-plante-laminations}
\subsubsection{Plante-like actions of $\Z \wr \Z$}
Here we explain why the Plante-like actions of $\Z \wr \Z$ (see \S \ref{ssc.plante}) are laminar and horograded by a cyclic action. 

\begin{prop}\label{prop Plante is laminar}
	A Plante-like action $\varphi\colon \Z \wr \Z \to \homeo_0(\R)$ is laminar and horograded by a cyclic action.
\end{prop}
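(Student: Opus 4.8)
The plan is to read both the lamination and its grading off the degree filtration of Laurent polynomials, directly from the algebraic picture on $\Omega=\Z[X,X^{-1}]$ underlying the dynamical realization $\varphi$. I will treat the order $\prec^+_{\max}$, the remaining three cases being entirely analogous with the obvious sign changes. For $m\in\Z$ let $L_{<m}\le L$ denote the subgroup of Laurent polynomials supported on powers of degree $<m$, and for $Q\in\Omega$ consider the coset
\[
C_{Q,m}=Q+L_{<m}=\{P\in\Omega: P-Q\text{ has degree }<m\},
\]
i.e.\ the set of polynomials agreeing with $Q$ in all coefficients of degree $\ge m$. First I would check the three purely algebraic facts on which everything rests: (i) each $C_{Q,m}$ is a bounded, convex, densely ordered subset of $(\Omega,\prec^+_{\max})$ with neither maximum nor minimum — boundedness because $-X^m\prec P\prec X^m$ for all $P\in L_{<m}$, and convexity because a polynomial lying in the order between two polynomials that agree above degree $m$ must itself agree with them above degree $m$; (ii) the family $\{C_{Q,m}\}$ is cross-free, with $C_{Q,m}\subseteq C_{Q',m'}$ iff $m\le m'$ and $Q-Q'\in L_{<m'}$; and (iii) it is invariant under the $\Z\wr\Z$-action, with the explicit formulas $g\cdot C_{Q,m}=C_{XQ,\,m+1}$ (as $X L_{<m}=L_{<m+1}$) and $R\cdot C_{Q,m}=C_{Q+R,\,m}$ for $R\in L$.

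Next I would transport this family to the line. Writing $t\colon\Omega\to\Q$ for the order-preserving bijection used to build $\varphi$, each bounded convex $C_{Q,m}$ spans a bounded open interval
\[
\ell_{Q,m}=\bigl(\inf t(C_{Q,m}),\ \sup t(C_{Q,m})\bigr)\subset\R,
\]
and, since $C_{Q,m}$ is convex with no extrema, one has $C_{Q,m}=t^{-1}(\ell_{Q,m})$, so the leaf remembers the coset. As each $\varphi(g)$ is an increasing homeomorphism compatible with $t$, invariance of $\{C_{Q,m}\}$ gives $\varphi$-invariance of $\mathcal{L}:=\{\ell_{Q,m}\}$, and cross-freeness of the cosets gives cross-freeness of the leaves. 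To see $\mathcal{L}$ is covering I would fix $Q$ and note $\bigcup_{m}C_{Q,m}=Q+L=\Omega$, so $(\ell_{Q,m})_{m}$ is an increasing exhaustion of $\R$; alternatively covering is automatic from Remark \ref{r-minimal-implies-focal} and minimality (Proposition \ref{prop Plante-like minimal}).

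Finally I would set $\hor(\ell_{Q,m})=m\in\Z\subset\R$. This is well defined by (ii) and (iii): the leaf determines the coset $C_{Q,m}$, the coset determines the subgroup $L_{<m}=C_{Q,m}-C_{Q,m}$, and distinct $m$ give distinct subgroups. It is monotone, since $\ell_{Q,m}\subseteq\ell_{Q',m'}$ forces $C_{Q,m}\subseteq C_{Q',m'}$ and hence $m\le m'$; and it is equivariant for the cyclic action $j\colon\Z\wr\Z\to\homeo_0(\R)$ given by $j(g)\colon x\mapsto x+1$ and $j|_{L}=\id$, because $\hor(g\cdot C_{Q,m})=m+1$ and $\hor(R\cdot C_{Q,m})=m$. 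As $j$ takes values in the integer translations and has no global fixed point, it is an irreducible cyclic action, so $(\mathcal{L},\hor)$ is a positive horograding of $\varphi$ by $j$. (For $\prec^{\pm}_{\min}$ the same scheme runs with $L_{>m}$ in place of $L_{<m}$ and $\hor=-m$, reflecting that $g$ then acts as a contracting homothety; this yields the translation $x\mapsto x-1$, again cyclic.)

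The step I expect to require the most care is verifying that $\mathcal{L}=\{\ell_{Q,m}\}$ is already a genuine (closed) lamination and not merely a prelamination, so that no ``limit leaves'' on which $\hor$ is undefined appear. Here I would argue that a convergent sequence $\ell_{Q_n,m_n}\to\ell\in\R^{(2)}$ produces no new nondegenerate leaf: within a fixed level $m$ the leaves are pairwise disjoint, so a nondegenerate limit forces the cosets to stabilize; leaves with $m_n\to-\infty$ shrink to points, since $\bigcap_m C_{Q,m}=\{Q\}$ and $t(Q+X^m)\to t(Q)$; and leaves with $m_n\to+\infty$ that contain a fixed leaf must share its $Q$-class and hence exhaust $\R$, so cannot converge to a bounded interval. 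Once this is settled, the remaining points are the routine order-theoretic translations recorded above, and the statement follows; this is in essence the mechanism of \cite[Example 8.1.8]{BMRT}.
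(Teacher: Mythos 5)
Your proof is correct and follows essentially the same route as the paper's: your leaves $\ell_{Q,m}$ are exactly the connected components of $\suppphi(h_{m-1})$ that the paper takes as its lamination (the paper's Claim~1 identifies each such component with the convex hull of the orbit $\{P+kX^{m-1}\}$, i.e.\ with your coset $C_{Q,m}$), and the horograding by the cyclic action $j(g)\colon x\mapsto x+1$, $j|_{L}=\id$ is identical. The only minor divergence is the closedness check, where you argue order-theoretically that leaves with $m_n\to\pm\infty$ degenerate or exhaust $\R$, whereas the paper invokes minimality of $\varphi$ at that step; both arguments work.
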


\begin{proof}
We resume notation from \S \ref{sec ejemplo lamp lighter}. In particular, recall that we denote by $\{g, h_0\}$ the standard generating pair of $\Z \wr \Z$,  satisfying the presentation  \eqref{e-presentation-ZwrZ}, and set $h_n:=g^n h_0 g^{-n}$.
For $n\in \Z$, we let $\mathcal{L}_n$ be the set of connected components of $\suppphi(h_n)$, and $\mathcal{L}=\bigcup_n \mathcal{L}_n$. For $l\in \mathcal{L}_n$, write $\hor(l):=n$. We want to prove that $\mathcal{L}$ is a $\varphi$-invariant lamination (if so, since $\varphi$ is minimal \ref{prop Plante-like minimal}, Remark \ref{r-minimal-implies-focal} guarantees that it is laminar), and that $(\mathcal{L}, \hor)$ is a horograding of $\varphi$ by the cyclic action $j\colon \Z \wr \Z\to \homeo_0(\R)$, given by $j(g):x\mapsto x+1$ and $j(h_0)=\id$.

We keep denoting by $L=\bigoplus_\Z \Z\cong \Z[X, X^{-1}]$  the group of lamps. 
Recall from \S \ref{ssc.plante} that there are four Plante-like actions of $\Z \wr \Z$, associated with four lexicographic orders on $\Z[X, X^{-1}]$. 
We only discuss the case of the order  $\prec_{\max}^+$ (the other cases are analogous), and start with the following observation (that we isolate for further reference).

\begin{claimnum} \label{l-plante-fixed-points}
For every $n\in \Z$, we have $\suppphi(h_n)\subsetneq \suppphi(h_{n+1})$. More precisely, every connected component of $\suppphi(h_n)$ is compactly contained in $\suppphi(h_{n+1})$.
\end{claimnum}
\setcounter{claimnum}{0}
\begin{proof}[Proof of claim]
Since each $h_n$  acts without fixed points on $\Z[X,X^{-1}]$, the open subset $\suppphi(h_n)$  contains $\Q$, which is the image of the order-preserving bijection $t\colon \Z[X,X^{-1}]\to \Q$ used to define the dynamical realization $\varphi$. More precisely, if for every $P\in \Z[X, X^{-1}]$ we let $I(n, P)$ denote the connected component of $\suppphi(h_n)$ containing $t(P)$, we have that every connected component of $\suppphi(h_n)$ is of the form $I(n, P)$, by density of $\Q$. Note that each $I(n, P)$ is equal to the convex hull of  $\{h_n^k.t(P): k\in \Z\}$.

Recall that $h_n$ corresponds to the polynomial $X^n$ under the identification  $L\cong \Z[X, X^{-1}]$. Using this identification, for $f_1, f_2\in L$, we have that $f_1.t(P)< f_2.t(P)$ if and only if $f_1+P\preceq_{\max}^+ f_2+P$, which in turn is equivalent to $f_1\preceq_{\max}^+ f_2$. Now, by the definition of the order $\preceq_{\max}^+$, we have that
\[-X^{n+1} \preceq_{\max}^+ kX^n \preceq_{\max}^+ X^{n+1}\]
for every $k\in \Z$. We conclude that for every $P\in \Z[X, X^{-1}]$, we have
\[h_{n+1}^{-1}.t(P)< h_n^k.t(P)< h_{n+1}.t(P)\]
for every $k\in \Z$, and thus ${I(n, P)}$ is compactly contained in $I(n+1, P)$.
\end{proof}

After Claim \ref{l-plante-fixed-points}, we have that the collection $\mathcal{L}=\bigcup_n \mathcal{L}_n$ defined above is a prelamination. Next, we check $\varphi$-invariance: by commutativity, the image of the generator $\varphi(h_0)$ preserves each support $\suppphi(h_n)$ by commutativity, so each $\mathcal{L}_n$ is $\varphi(h_0)$ invariant; for the other generator, we have
\[g.\suppphi(h_n)=\suppphi(gh_ng^{-1})=\suppphi(h_{n+1}),\]
so $g.\mathcal{L}_n=\mathcal{L}_{n+1}$. This also gives that the map $\hor$ intertwines the $\varphi$-action on $\mathcal{L}$ with the cyclic action $j$ defined above, defining a positive horograding.

Finally, let us check that $\mathcal{L}$ is a discrete (hence closed) subset of $\R^{(2)}$. Indeed, suppose that $(l_n)\subset \mathcal{L}$ is a sequence converging to some interval $I\in \R^{(2)}$. Up to discard finitely many terms, this gives that the intervals $l_n$ are all related by inclusion, and upon extracting a subsequence, we can assume that the sequence $(l_n)$ is monotone, and thus also $(\hor(l_n))$ is. Now, if the sequence $\hor(l_n)$ is bounded, then Claim \ref{l-plante-fixed-points} implies that $(l_n)$ is eventually constant and equal to $I$, in particular $I\in \mathcal{L}$. If $\hor(l_n)$ increases to $+\infty$, then $I=\bigcup l_n$ is a connected component of $\bigcup_{m\in \Z}\suppphi(h_m)$; however, as the latter is a $\varphi$-invariant open set, minimality of $\varphi$ (Proposition \ref{prop Plante-like minimal}) gives $I=\R$, contradicting that $I$ is a bounded open interval. Finally, if $\hor(l_n)\to -\infty$, we have that $I$ is a connected component of the interior of $\bigcap_{m\in \Z}\suppphi(h_m)$; using minimality again, we conclude that $I=\varnothing$. Thus $\mathcal{L}$ is a discrete subset of $\R^{(2)}$.
\end{proof}

\subsubsection{Plante-like actions of more general wreath products} \label{s-wreath}
As explained in \cite[Example 8.1.8]{BMRT}, the construction of Plante-like actions of $\Z \wr \Z$ can be easily generalized to more general wreath products.   Recall that given groups $B$, $H$, and an action of $B$ on a set $\Omega$, the \emph{permutational wreath product} $H\wr_\Omega B$ is the semi-direct product $(\bigoplus_\Omega H)\rtimes B$, where $\bigoplus_\Omega H$ is the group of finitely supported configurations $f\colon \Omega \to H$, and $B$ acts on it via the shift action $b\cdot f(\omega)=f(b^{-1}\omega)$.  (When $\Omega=B$ and the action is the left-regular action, this gives the wreath product $H\wr B$.)
The group $H\wr_\Omega B$ then acts naturally on $\bigoplus_\Omega H$, by letting the group $\bigoplus_{\Omega} H$ act on itself by translations, and the group $B$ act on $\bigoplus_{\Omega} H$ by the shift action. 

Suppose that $\prec_\Omega$ is a $B$-invariant order on $\Omega$, and that $\prec_H$ is a left-invariant order on the group $H$. This allows to define an order of lexicographic type $\prec$ on $\bigoplus_\Omega H$, whose positive cone is the set  of configurations $f\in \bigoplus_\Omega H$ such that  $f(x_f)\succ_H \id$, where $x_f:=\max_{\prec_{\Omega}}\{x\in \Omega :  f(x)\neq \id\}.$ The order $\prec$ is invariant for the natural action of $H\wr_\Omega B$ on $\bigoplus_{\Omega} H$. If $\Omega$ and $H$ are countable, and $(\Omega,\prec_\Omega)$ is unbounded below and above, we obtain an action $\varphi\colon H \wr_\Omega G\to \homeo_0(\R)$ by taking the dynamical realization of this action, that we call a \emph{Plante-like} action of $H \wr_\Omega G$. This construction leads to the following result.
In the statement we denote by $\pi_B\colon H\wr_\Omega B\to B$ the natural quotient projection. 

\begin{prop}\label{p-plante-like-general}
 Let $\rho\colon B\to \homeo_0(\R)$ be an irreducible action of a countable group $B$, and $\Omega\subset \R$ be a countable $\rho$-invariant set. Let $H$ be any countable left-orderable group. Then there exists a faithful minimal laminar action $\varphi\colon H\wr_{\Omega} B\to \homeo_0(\R)$, horograded by $\rho\circ \pi_B$. 
\end{prop}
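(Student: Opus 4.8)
The plan is to run, with $\rho$ in place of the cyclic ``level'' action, the same two arguments used for the Plante-like actions of $\Z\wr\Z$ in Propositions \ref{prop Plante is laminar} and \ref{prop Plante-like minimal}.

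\emph{Set-up and faithfulness.} Give $\Omega$ the order $\prec_\Omega$ inherited from $\R$; it is $B$-invariant since $\rho$ preserves orientation, and $(\Omega,\prec_\Omega)$ is unbounded above and below, as otherwise $\sup\Omega$ or $\inf\Omega$ would be a global fixed point of $\rho$. Fix a left order $\prec_H$ on the (nontrivial) group $H$ and let $\prec$ be the induced lexicographic order on $N:=\bigoplus_\Omega H$. Then $(N,\prec)$ is countable, without extremal elements, and densely ordered: given $f_1\prec f_2$ differing at their top index $x^\ast$, a perturbation of $f_1$ at any index $\prec_\Omega x^\ast$ (which exists since $\Omega$ has no minimum) lies strictly between them. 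Let $\varphi$ be the dynamical realization of the natural order action of $H\wr_\Omega B$ on $(N,\prec)$; it is irreducible. It is faithful because $N$ acts on itself by translations, hence faithfully, while each $b\neq\id$ acts by the shift induced by $\rho(b)$ on $\Omega$, which is nontrivial as $\rho$ acts faithfully on $\Omega$.

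\emph{Lamination and horograding.} For $\omega\in\Omega$ set $N_{\preceq\omega}=\{f\in N:\supp f\subseteq\{x\in\Omega:x\preceq_\Omega\omega\}\}$. A case analysis on the top index shows $N_{\preceq\omega}$ is order-convex, so the convex hull of each coset $t(fN_{\preceq\omega})$ is a bounded open interval (bounded since the coset is order-bounded, nondegenerate since $N_{\preceq\omega}\supseteq H_\omega$ is infinite); call it a leaf of level $\omega$. These leaves are exactly the connected components of the support of the coordinate subgroup at $\omega$, generalizing the intervals $I(n,P)$ of Proposition \ref{prop Plante is laminar}. Since $\omega\preceq_\Omega\omega'$ forces $N_{\preceq\omega}\le N_{\preceq\omega'}$, cosets of these subgroups refine one another, so any two leaves are nested or disjoint: the family $\mathcal{L}_0$ of all leaves is a $\varphi$-invariant prelamination. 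Putting $\hor(l)=\omega$ for a leaf $l$ of level $\omega$ gives an order-preserving map, and since $(f,b)$ sends a coset of $N_{\preceq\omega}$ to a coset of $\beta_b(N_{\preceq\omega})=N_{\preceq\rho(b)(\omega)}$, we get $\hor(\varphi(f,b)(l))=\rho(b)(\hor(l))=(\rho\circ\pi_B)(f,b)(\hor(l))$. Thus $(\mathcal{L}_0,\hor)$ is a positive horograding of $\varphi$ by $\rho\circ\pi_B$, which is irreducible since it surjects onto the image of $\rho$.

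\emph{Minimality and the main obstacle.} Minimality is obtained as in Proposition \ref{prop Plante-like minimal}: given $x\in\R$ and an open $I$, pick a rational in $I$, use transitivity of $N$ on $t(N)=\Q$ to translate $I$ so that its image contains the compact set $K$ of a fixed expanding pseudo-homothety $\varphi(b_0)$ with $b_0\in B$, and apply a large power of $\varphi(b_0)$ to cover $x$. The one new input is the existence of such $b_0$: by Proposition \ref{p-classification-elements} it suffices that the germ of $\rho(b_0)$ at $+\infty$ be $>\id$ (fixed points bounded above), and every irreducible $\rho$ admits such an element --- otherwise every element of $\rho(B)$ would have fixed points accumulating at $+\infty$, forcing the minimal set of $\rho$ to be bounded above (whence a global fixed point) or, after passing to its minimal model, contradicting the classification of minimal actions on the line. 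Establishing this expanding element, and then promoting the prelamination $\mathcal{L}_0$ to a genuine closed lamination $\mathcal{L}=\overline{\mathcal{L}_0}$ with $\hor$ extended by monotone continuity --- a step that is automatic when $\Omega$ is discrete, as for $\Z\wr\Z$, but which for general $\Omega\subseteq\R$ requires controlling the added leaves (convex hulls of cosets of the subgroups $N_{\prec\omega}$) and checking that the limiting levels remain order-preserving and $\rho\circ\pi_B$-equivariant --- are the two points I expect to demand the most work. Once minimality is known, Remark \ref{r-minimal-implies-focal} makes $\mathcal{L}$ automatically covering, completing the construction.
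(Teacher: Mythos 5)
First, a remark on the comparison: the paper does not actually prove this proposition — it refers to \cite[Example 8.1.8]{BMRT} — so there is no in-paper proof to match against. Your construction (the lexicographic order on $N=\bigoplus_\Omega H$, its dynamical realization, leaves given by convex hulls of cosets of the convex subgroups $N_{\preceq\omega}$, and the level map as horograding) is the same as the one sketched in \S\ref{s-wreath}, and the lamination/horograding part of your argument is essentially sound modulo the closure step you yourself flag. One caveat on faithfulness: you assert that $\rho$ acts faithfully on $\Omega$, which is not among the stated hypotheses. If some $b\neq\id$ fixed $\Omega$ pointwise, then $(\bar{0},b)$ would act trivially on $\bigoplus_\Omega H$ and the dynamical realization could not be faithful; so this is an implicit hypothesis that you should isolate rather than silently assume.

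The genuine gap is in minimality. Your argument hinges on finding $b_0\in B$ such that $\varphi(b_0)$ is an expanding pseudo-homothety, which by Proposition \ref{p-classification-elements} requires $\Fix(\rho(b_0))$ to be bounded above, and you claim that every irreducible action of a countable group admits such an element. This is false, and the paper itself contains a counterexample: in a Plante-like action of $\Z\wr\Z$, the lamp subgroup $L=\bigoplus_\Z\Z$ acts irreducibly while every one of its elements is totally bounded (its fixed-point set accumulates at both ends); taking $B=L$ with this action, no such $b_0$ exists. (More generally, any countable irreducible group all of whose elements have fixed points accumulating at $\pm\infty$, such as the commutator subgroup of Thompson's group $F$ acting on $(0,1)\cong\R$, gives a counterexample.) Your justification — that otherwise the minimal set of $\rho$ would be bounded above or contradict ``the classification of minimal actions on the line'' — appeals to a classification that does not exist. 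In such cases Proposition \ref{p-classification-elements} forces every element of $\varphi(H\wr_\Omega B)$ to be totally bounded, so no expansion argument of the type used in Proposition \ref{prop Plante-like minimal} can even start, and minimality must be established by a different route (for instance, by showing that every orbit accumulates on the dense orbit $t(N)$, using that the leaves through $t(\bar{0})$ shrink to $\{t(\bar{0})\}$ as the level tends to $-\infty$ and pulling an arbitrary point back into them by elements of $N$; the points not interior to any leaf then need a separate treatment). A secondary problem with the same step: even when an expanding pseudo-homothety with compact set $K$ does exist, transitivity of $N$ on $t(N)=\Q$ only lets you move one point of $I$ onto a point of $K$; it does not yield $\varphi(h)(I)\supseteq K$, which is what the definition of pseudo-homothety requires before iteration expands the interval. (When some $\rho(b_0)$ is fixed-point free, $\varphi(b_0)$ is a genuine homothety fixing $t(\bar{0})\in\Q$, and your argument does go through exactly as in Proposition \ref{prop Plante-like minimal}.)
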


We refer to \cite[Example 8.1.8]{BMRT} for the proof and further details on the construction. Although the general result will not be used in this article, we will consider a special case in  \S \ref{ssc.ZwrZwrZ}.  For the moment, let us simply observe that Proposition \ref{p-plante-like-general} can be used iteratively to produce minimal laminar  faithful actions of solvable groups of arbitrary large solvability degree, and examples of actions that are horograded by more complicated actions than a cyclic one. Many more examples of laminar actions and horogradings can be found in \cite{BMRT}.

\subsection{Minimal laminations}

\begin{dfn} Let $\mathcal{L}\subset \R^{(2)}$ be a non-empty lamination preserved by an action $\varphi\colon G\to\homeo_0(\R)$. We say that $\mathcal{L}$ is a \emph{minimal} $\varphi$-invariant lamination if the subset $\mathcal{L}$ is minimal with respect to inclusion, among the family of non-empty $\varphi$-invariant laminations. 
\end{dfn}

In other terms, $\mathcal{L}$ is a minimal $\varphi$-invariant lamination if and only if the subset $\mathcal{L}\subset\R^{(2)}$ is a minimal closed  invariant subset for the diagonal action induced by $\varphi$ on $\R^{(2)}$.  When there is no risk of confusion, we will simply adopt the terminology ``minimal lamination'' instead of  ``minimal $\varphi$-invariant  lamination''.
As discussed in \S \ref{sc.preliminary-actions}, every finitely generated subgroup of $\homeo_0(\R)$ admits a minimal invariant subset. Our first general result is that, in analogy to minimal invariant subsets for actions on the real line, minimal laminations always exist for focal laminar actions of finitely generated groups.

\begin{prop}\label{prop.minimalamination} Let $G$ be a finitely generated group, and $\varphi\colon G\to\homeo_0(\R) $ a focal laminar action with respect to  a lamination $\mathcal{L}$. Then $\mathcal{L}$ contains a minimal $\varphi$-invariant  lamination. 
\end{prop}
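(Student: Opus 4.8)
The plan is to realize the minimal lamination as a minimal element of a suitable poset via Zorn's lemma. Consider the collection $\mathsf{P}$ of all non-empty, closed, $\varphi$-invariant subsets $\mathcal{L}'\subseteq\mathcal{L}$ of $\R^{(2)}$. Any such $\mathcal{L}'$ is automatically a prelamination, since the cross-free condition is inherited from $\mathcal{L}$, hence a $\varphi$-invariant lamination; and a minimal element of $(\mathsf{P},\subseteq)$ is exactly a minimal $\varphi$-invariant lamination contained in $\mathcal{L}$. By Zorn's lemma (in the form guaranteeing a minimal element) it suffices that every chain in $(\mathsf{P},\subseteq)$ admit a lower bound. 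For a chain $(\mathcal{L}_\alpha)$ the natural candidate is the intersection $\bigcap_\alpha\mathcal{L}_\alpha$, which is closed, $\varphi$-invariant and cross-free, so it lies in $\mathsf{P}$ precisely when it is non-empty. Thus the entire difficulty is concentrated in showing that every chain of non-empty closed $\varphi$-invariant sublaminations has non-empty intersection.

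To produce such intersections I would compactify $\R^{(2)}$ inside the ideal triangle $\Delta=\{(x,y):-\infty\le x\le y\le+\infty\}$, on which $\homeo_0(\R)$ acts by extending each homeomorphism to $[-\infty,+\infty]$ and acting diagonally; here $\R^{(2)}$ is the open interior of $\Delta$, and since each $\mathcal{L}_\alpha$ is closed in $\R^{(2)}$ we have $\mathcal{L}_\alpha=\overline{\mathcal{L}_\alpha}\cap\R^{(2)}$, closures being taken in $\Delta$. For a chain, the closures $\overline{\mathcal{L}_\alpha}$ form a chain of non-empty compact subsets of the compact space $\Delta$, so $\bigcap_\alpha\overline{\mathcal{L}_\alpha}\neq\varnothing$ by the finite intersection property, and $\bigcap_\alpha\mathcal{L}_\alpha=\R^{(2)}\cap\bigcap_\alpha\overline{\mathcal{L}_\alpha}$. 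It therefore remains to guarantee that this limit set is not entirely swallowed by the ideal boundary $\partial\Delta$, that is, that the leaves of the chain neither shrink to a point nor run off to infinity. This non-escape would follow at once from a single compact set $C\subset\R^{(2)}$ meeting every $G$-orbit of leaves of $\mathcal{L}$: any $\mathcal{L}'\in\mathsf{P}$, being non-empty and invariant, contains a full orbit and hence meets $C$, so the sets $\mathcal{L}_\alpha\cap C$ would form a chain of non-empty compacts, with non-empty (and interior) intersection.

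Constructing such a compact transversal $C$ is the heart of the matter, and the step I expect to be the main obstacle. The idea is to use focality to normalize the scale of leaves. From the exhausting sequence one extracts an element $\gamma\in G$ and a leaf $l_*$ with $\overline{l_*}\subset\varphi(\gamma)(l_*)$; since every element of a laminar action is either totally bounded or a pseudo-homothety, $\gamma$ is an expanding pseudo-homothety, its fixed point set is bounded, and its powers satisfy $\varphi(\gamma^k)(l_*)\uparrow\R$. For any leaf $m$ there is then a unique $k$ with $m\subseteq\varphi(\gamma^{k})(l_*)$ but $m\not\subseteq\varphi(\gamma^{k-1})(l_*)$, and translating $m$ by $\gamma^{1-k}$ places it in the ``annular'' family of leaves contained in $\varphi(\gamma)(l_*)$ but not in $l_*$. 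The delicate point is that this normalization does not by itself keep the translated leaf bounded away from the degenerate boundary: a leaf disjoint from $l_*$ yet sitting inside the annulus could still be arbitrarily small, which is exactly the phenomenon of leaves degenerating to a point. Ruling this out is where finite generation must enter, through a first-entry/minimal-length argument over a finite symmetric generating set together with the covering property of $\mathcal{L}$, showing that an invariant sublamination cannot consist only of leaves whose normalizations degenerate. This is in complete analogy with the classical fact that a finitely generated group of homeomorphisms of the line admits a minimal invariant set, and once $C$ is in place the argument of the previous paragraph closes the proof.
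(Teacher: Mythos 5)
Your overall architecture is the same as the paper's: reduce, via Zorn's lemma, to producing a compact subset of $\R^{(2)}$ that meets every $\varphi$-orbit of leaves, so that nested chains of non-empty closed invariant sublaminations have non-empty intersection by the finite intersection property. That reduction is correct. But the construction of the compact transversal --- which you yourself single out as ``the heart of the matter'' --- is where the proposal stops short, and the normalization you sketch does not produce one. Renormalizing a leaf $m$ by the power $\gamma^{1-k}$ that places it in the annulus between $l_*$ and $\varphi(\gamma)(l_*)$ constrains $m$ only from above (it sits inside $\varphi(\gamma)(l_*)$), not from below: as you note, the renormalized leaf can be disjoint from $l_*$ and arbitrarily short, i.e.\ it can degenerate towards the diagonal of $\R^{(2)}$. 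The ``first-entry/minimal-length argument'' you invoke to rule this out is precisely the missing content, and it is not a routine transplant of the classical existence of minimal invariant sets on the line, because there the phenomenon of leaves shrinking to points has no analogue.

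The paper's fix is a differently shaped transversal, constrained on both sides. Fix a finite symmetric generating set $S$ and, using that $\mathcal{L}$ is covering, choose $l\in\mathcal{L}$ so large that $h.l\cap l\neq\varnothing$ for every $h\in S$; the non-crossing condition then makes $L:=\bigcup_{h\in S}h.l$ again a leaf, and the sandwich $\mathcal K=\{k\in\mathcal{L}: l\subseteq k\subseteq L\}$ is compact because every $k\in\mathcal K$ contains $l$ (so cannot degenerate) and is contained in $L$ (so cannot escape to infinity). The minimal-length argument is then tailored to land every orbit in this sandwich: given $k_0$, take $h=f_n\cdots f_1$ of minimal $S$-length with $L\subseteq h.k_0$ (focality provides such an $h$); then $k_*:=f_n^{-1}h.k_0$ does not contain $L$ by minimality of the length, yet contains $l$ because $f_n.l\subseteq L\subseteq f_n.k_*$, and it meets $L$, so non-crossing forces $l\subseteq k_*\subseteq L$. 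The essential idea your proposal is missing is that the transversal must consist of leaves \emph{containing} a fixed leaf, not merely leaves \emph{contained in} one; once that is in place, the Zorn argument closes the proof exactly as you describe.
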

\begin{proof} Consider  a finite symmetric generating system $S=\{g_1,\ldots,g_n\}$ for $G$, and let $\|\cdot\|_S$ be the associated word length. As $\Lcal$ exhausts the real line and $S$ is finite, we can take an interval $l\in\mathcal{L}$ so that $h.l\cap l\neq\emptyset$ for every $h\in S$. The non-crossing condition gives that for every $h\in S$, either $h.l$ or $h^{-1}.l$ contains $l$. This implies that the subset $L:=\bigcup_{h\in S} h.l$ is an interval which belongs to $\mathcal{L}$. We can then consider the subset $\mathcal K:=\{k\in\mathcal{L}:l\subseteq k\subseteq L\}$, which is a compact subset of $\Lcal$.
We want to show that any orbit in $\mathcal{L}$ intersects $\mathcal K$. 
Take $k_0\in\mathcal{L}$. If $k_0\in \mathcal K$, there is nothing to prove. Otherwise, by definition of focal laminar action we can find an element $h\in G$ such that  $L\subseteq h.k_0$, and such that $n:=\|h\|_S$ is minimal among lengths of elements with this property. 
Write $h=f_n\cdots f_1$, with $f_i\in S$ for every $i\in \{1,\ldots,n\}$. By the minimality assumption on $h$, the interval $k_*=f_n^{-1}h.k_0$ does not contain $L$. On the other hand, by definition of $L$, we have $f_n.l\subseteq L$, and since $L\subseteq h.k_0 = f_n.k_*$, we must have $l\subseteq k_*$. As $l\subseteq L$, this gives $k_*\cap L\neq \varnothing$, so that by the non-crossing condition we must have $l\subseteq k_*\subseteq L$, or equivalently $k_*\in \mathcal K$, as desired.

	Consider now the families
	\[\mathcal{C}=\{C\subset\mathcal{L}:C\text{ is }\varphi\text{-invariant and closed}\}\quad\text{and}\quad\mathcal{C}_{\mathcal K}=\{C\cap \mathcal K:C\in\mathcal{C}\}.\]
	Denote by $i\colon \mathcal{C}\to\mathcal{C}_{\mathcal K}$ the map given by $C\mapsto C\cap \mathcal K$. Since every $\varphi$-orbit meets $\mathcal K$, we have that $i$ is a partial-order isomorphism with respect to inclusion. On the other hand, since $\mathcal K$ is compact, every decreasing chain in $\mathcal{C}_{\mathcal K}$ has an infimum, so that {after Zorn's lemma} we can find a $\varphi$-invariant lamination which is minimal with respect to inclusion in $\mathcal{C}$. 
\end{proof}

\begin{rem}\label{r-discrete-minimal}
Minimal invariant laminations naturally split into two families: the \emph{discrete} and the \emph{non-discrete} ones (with respect to the natural topology induced from $\R^{(2)}$). 
When $\mathcal{L}$ is discrete, each $\varphi$-orbit in $\mathcal{L}$ is open, and by minimality we have that $\mathcal{L}$ consists of a single $\varphi$-orbit. Moreover, for each leaf $l_0\in \mathcal{L}$, the set $\{l\in \mathcal{L} : l\supseteq l_0\}$ is totally ordered and discrete, so  isomorphic to $\N$ as an ordered set. In particular, every $l_0$ has a successor $l_1$ in the poset $(\mathcal{L}, \subseteq)$.
When $\mathcal{L}$ is non-discrete, minimality implies that it has no isolated points.\end{rem}

For later use, we record a simple lemma on horogradings defined on minimal laminations.

\begin{lem} \label{l-horograding-almost-injective}
Let $\varphi\colon G\to \homeo_0(\R)$ be a minimal laminar action, and $(\mathcal{L}, \hor)$ a horograding of $\varphi$ by  $j\colon G\to \homeo_0(\R)$, with $\mathcal{L}$ a minimal $\varphi$-invariant lamination. Suppose that $l_0,l_1\in \mathcal L$ are such that $l_0\subsetneq l_1$ and $\hor(l_0)=\hor(l_1)$. Then $l_1$ is an immediate successor of $l_0$. As a consequence, $\mathcal{L}$ is non-discrete, and $l_0$ and $l_1$ are in distinct $\varphi$-orbits.
\end{lem}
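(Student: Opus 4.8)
The plan is to set $c:=\hor(l_0)=\hor(l_1)$ and to begin with the elementary consequence of monotonicity: since $\hor$ is order-preserving on the chain $\{k\in\mathcal L: l_0\subseteq k\subseteq l_1\}$, every leaf $k$ with $l_0\subseteq k\subseteq l_1$ satisfies $\hor(k)=c$. Thus the assertion that $l_1$ is an immediate successor of $l_0$ amounts to showing that this flat chain contains no third leaf, and the two ``consequences'' will be extracted from this once it is established.

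To prove the immediate-successor claim I would argue by contradiction, using minimality of the lamination to manufacture an element that pushes a leaf strictly inside the flat region. Suppose there is $m\in\mathcal L$ with $l_0\subsetneq m\subsetneq l_1$. Since $\mathcal L$ is a minimal $\varphi$-invariant lamination, the $\varphi$-orbit of $l_0$ is dense in $\mathcal L\subseteq\R^{(2)}$, so I can pick $g\in G$ with $\varphi(g)(l_0)$ close enough to $m$ that $l_0\subsetneq \varphi(g)(l_0)\subsetneq l_1$. The first inclusion forces $\varphi(g)$ to strictly enlarge $l_0$, while the two inclusions together with the flatness above give $\hor(\varphi(g)(l_0))=c$, i.e.\ $j(g)(c)=c$. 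Iterating $\varphi(g)$ then produces a strictly increasing sequence of leaves $\varphi(g^n)(l_0)$, all of horograding value $c$ because $j(g)$ fixes $c$. If this sequence exhausts $\R$, then Remark~\ref{r-horograding-proper} yields $\hor(\varphi(g^n)(l_0))\to+\infty$, contradicting that these values are constantly $c$. The main obstacle is precisely to exclude the alternative, in which the increasing leaves converge to a proper $\varphi(g)$-invariant leaf $K$ (equivalently, $\varphi(g)$ is totally bounded rather than a pseudo-homothety). Here I would invoke the classification of Proposition~\ref{p-classification-elements}, together with the freedom in choosing $g$ and the irreducibility of $j$, to arrange that $g$ act as an \emph{expanding} pseudo-homothety; for such an element the iterates $\varphi(g^n)(l_0)$ necessarily exhaust $\R$, which closes the argument.

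For non-discreteness I would argue directly, independently of the previous step. If $\mathcal L$ were discrete, then by Remark~\ref{r-discrete-minimal} it is a single $\varphi$-orbit and the chain $\mathcal F_{l_0}:=\{k\in\mathcal L: k\supseteq l_0\}$ is order-isomorphic to $\N$ and exhausts $\R$. Writing $l_1$ as the $p$-th leaf of this chain and using that a single orbit provides $g\in G$ with $\varphi(g)(l_0)=l_1$, the element $\varphi(g)$ shifts $\mathcal F_{l_0}$ upward by $p$ steps and fixes $c$ under $j$; monotonicity of $\hor$ then forces $\hor\equiv c$ on the whole exhausting chain $\mathcal F_{l_0}$, again contradicting Remark~\ref{r-horograding-proper}. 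Hence $\mathcal L$ is non-discrete.

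Finally, the distinct-orbits assertion follows cleanly by an isolation argument once the immediate-successor claim and non-discreteness are available. If $l_0$ and $l_1$ were in the same orbit, pick $g$ with $\varphi(g)(l_0)=l_1$ and set $l_{-1}:=\varphi(g^{-1})(l_0)\subsetneq l_0$; then $\hor(l_{-1})=j(g^{-1})(c)=c$, so applying the immediate-successor claim to the pairs $(l_{-1},l_0)$ and $(l_0,l_1)$ shows that $l_0$ has both an immediate predecessor and an immediate successor. Consequently any leaf $k\neq l_0$ strictly containing $l_0$ must contain $l_1\supsetneq l_0$, any leaf strictly contained in $l_0$ must be contained in $l_{-1}\subsetneq l_0$, and a leaf disjoint from $l_0$ lies in a half-line off $l_0$; in all three cases no sequence of distinct leaves can converge to $l_0$ in $\R^{(2)}$. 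Thus $l_0$ would be isolated in $\mathcal L$, contradicting non-discreteness, and therefore $l_0$ and $l_1$ lie in distinct $\varphi$-orbits.
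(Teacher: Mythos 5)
Your reduction is fine up to the point where you have $g$ with $l_0\subsetneq \varphi(g)(l_0)\subsetneq l_1$, hence $j(g)(c)=c$ and a strictly increasing sequence $\varphi(g^n)(l_0)$ of constant horograding value. But the step you yourself flag as ``the main obstacle'' is a genuine gap, and I do not see how to close it along the lines you propose. All you know about any admissible $g$ is that $j(g)$ fixes $c$; this gives no control on whether $\Fix(j(g))$ accumulates at $+\infty$, which is what Proposition \ref{p-classification-elements} requires to conclude that $\varphi(g)$ is a pseudo-homothety. Worse, the lemma allows $j$ to be cyclic (and it is used in that generality, e.g.\ via Theorem \ref{t-main}): if $j$ takes values in integer translations, then $j(g)(c)=c$ forces $j(g)=\id$, so \emph{every} admissible $g$ has $\varphi(g)$ totally bounded and your ``freedom in choosing $g$'' is empty. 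Even granting an expanding pseudo-homothety, the iterates $\varphi(g^n)(l_0)$ need not exhaust $\R$: Definition \ref{d-expanding-homothety} only guarantees expansion of open sets containing the compact core $K$, and if $l_0\not\supseteq K$ the increasing union $\bigcup_n\varphi(g^n)(l_0)$ can perfectly well be a bounded $\varphi(g)$-invariant leaf (its endpoints being fixed points of $\varphi(g)$ inside the convex hull of $K$). So the contradiction via Remark \ref{r-horograding-proper} is not reached.

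For comparison, the paper avoids any dynamical classification of elements: it observes that by minimality every $\varphi$-orbit meets the set $U$ of intermediate leaves, so that \emph{every} leaf $k$ admits some $k'\supsetneq k$ with $\hor(k')=\hor(k)$; the union $k_1$ of all such $k'$ is then a leaf (bounded above by Remark \ref{r-horograding-proper}, and $\mathcal L$ is closed) with $\hor(k_1)>\hor(k)$, which is therefore accumulated from inside by leaves of strictly smaller horograding value --- a property that is transported by the group action into $U$, where it is visibly impossible. Your treatment of non-discreteness is correct and matches the paper's. In the distinct-orbits step, the assertion that ``any leaf strictly contained in $l_0$ must be contained in $l_{-1}$'' is false (the leaves inside $l_0$ are not totally ordered; such a leaf may be disjoint from $l_{-1}$), though the isolation conclusion you want can still be salvaged, since a sequence of leaves inside $l_0$ converging to $l_0$ must eventually contain $l_{-1}$ and hence lie in the empty gap between $l_{-1}$ and $l_0$.
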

\begin{proof} We will assume that $(\mathcal L,\hor)$ is a positive horograding, the other case being analogous.
Let $l_0, l_1\in \mathcal L$ be as in the statement and suppose, by contradiction, that the set $U=\{k\in \mathcal{L}: l_0\subsetneq k\subsetneq l_1\}$ is non-empty. Note that $U$ is an open, totally ordered subset of $\mathcal{L}$, on which the horograding $\hor$ is constant, since $\hor(l_1)\leq \hor(k)\leq \hor(l_2)=\hor(l_1)$ for any $k\in U$. By minimality, the orbit of every $k\in \mathcal{L}$ intersects $U$. From this, it follows that for every $k$,  there exists $k'\supsetneq k$ such that $\hor(k)=\hor(k')$. For a fixed $k$, the set of such $k'$, must be bounded above in $(\mathcal{L}, \subseteq)$ (see Remark \ref{r-horograding-proper}), thus
\[k_1:=\bigcup_{k'\in \mathcal L,\, \hor(k')=\hor(k)}k'\]
is a well-defined element of $\mathcal{L}$. We have $\hor(k_1)\neq \hor(k)$, as otherwise the previous reasoning yields some $k_1'\subsetneq k_1$ such that $\hor(k_1')=\hor(k)$, contradicting the choice of $k_1$. In particular, $k_1$ is accumulated by leaves $k'\subset k_1$ with $\hor(k')<\hor(k_1)$. But then, the same is true for all points in the $\varphi$-orbit of $k_1$, and since the latter intersects $U$, this is a contradiction.
Hence $U$ is empty, i.e.\ $l_0$ is an immediate successor of $l_1$.

Now, if $\mathcal L$ was discrete, then it would consist of a single orbit (Remark \ref{r-discrete-minimal}), and equivariance of $\hor$ would easily imply that $\hor$ is constant, contradicting Remark \ref{r-horograding-proper}.
Hence $\mathcal{L}$ is non-discrete, and thus has no isolated point (Remark \ref{r-discrete-minimal}). Since the leaf $l_0$ has a successor, it is not accumulated by leaves that contain it, and thus must be accumulated by leaves contained in it, and similarly $l_1$ must be accumulated by leaves that contain it. It follows that  no element of $G$ can map $l_0$ to $l_1$. 
\end{proof}

\subsection{Minimal actions on directed trees} In this subsection we recall some terminology from \cite[Chapter 11]{BMRT} about group actions on directed trees, and introduce an adequate notion of minimality for such actions.

\begin{dfn}\label{def d-trees}
	A  \emph{directed tree} is a poset $(\Tbb, \treeorder)$ with the following properties.
	\begin{enumerate}[label=(T\arabic*)]
		
		\item \label{i-tree} For every $v\in \Tbb$, the subset $\{u\in \Tbb : v \treeordereq u\}$  is totally ordered and order-isomorphic to the half-line $[0, +\infty)$.

		\item \label{i-directed} Every pair of points  $u, v\in \Tbb$ has a smallest common upper bound, denoted as $u\treeup v$.
		
		\item \label{i-chains-R} There exists a countable subset $\Sigma \subset \Tbb$ such that for every  distinct $u, v\in \Tbb$ with $u \treeordereq v$ there exists $z\in \Sigma$ such that $u\treeordereq z\treeordereq v$.
	\end{enumerate}

For later use in \S \ref{s-modding-out}, we also record here the following variant of this notion.
\begin{dfn} \label{d-pre-directed-tree} We say that a poset $(\Tbb,\treeorder)$ is a \emph{pre-directed tree} if it satisfies conditions  \ref{i-tree} and \ref{i-chains-R} in the definition of directed tree and 
	\begin{enumerate}[label=(T2')]
		\item \label{c-predirected} for every pair of points  $u, v\in \Tbb$ there is $w\in \Tbb$ such that $u \treeorder w$ and $v\treeorder w$. 
	\end{enumerate}
	  \end{dfn}

\begin{rem} \label{r-Hausdorff} To understand the difference between \ref{i-directed} and \ref{c-predirected}, think of  the two posets obtained by gluing two copies of $(\R, <)$ along the closed ray $[0, +\infty)$, or along the open ray $(0, +\infty)$, respectively. The former is a directed tree, while the latter is only a pre-directed tree.  Although we will not consider any topology on a directed tree, one may think of a pre-directed tree as a directed tree  which is possibly not Hausdorff.  \end{rem}
\begin{dfn} \label{d-tree-horograding}

We say that a point $u\in \Tbb$ is \emph{below} $v\in \Tbb$ (or that $v$ is \emph{above} $u$) if $u\neq v$ and $u\treeordereq v$, and write $u\treeorder v$.	When $u$ is below $v$, we write $[u,v]=[v,u]=\{w\in \Tbb:u\treeordereq w \treeordereq v\}$ for the arc between $u$ and $v$, and for general $u$ and $v$ we set $[u, v]=[u, u\treeup v]\cup [v, u\treeup v]$.  We also write $]u,v[=[u,v]\setminus \{u,v\}$, and similarly we define $]u,v]$ and $[u,v[$. Arcs allow to define path-components and thus to introduce the notion of \emph{branching points}, that is, points $v\in \Tbb$ such that $\Tbb\setminus \{v\}$ has at least three path-components. We write $\Br(\Tbb)$ for the collection of branching points. We say that a directed tree $(\Tbb,\treeorder)$ is \emph{simplicial} if $\Br(\Tbb)\cap [v,w]$ is finite for every  $v,w\in \Tbb$. A point $v\in \Tbb$ is called a \emph{leaf} if it $\Tbb\setminus\{v\}$ has only one path-component, or equivalently, if $v$ is minimal with respect to $\treeorder$. We will be only interested in trees without leaves (as justified by Remark \ref{r-no-leaves} below). 
\end{dfn}

We will also use the notation $[v, \infty_\Tbb[=\{w\in \Tbb: v\treeordereq w \}$, and $]v, \infty_\Tbb[:=[v, \infty_\Tbb[\setminus\{v\}$. Here the notation $\infty_\Tbb$ stands for an imaginary extra point, which is greater than any element in $\Tbb$, and will be called the \emph{focus}.

We next discuss the dynamics of group actions on directed trees.

\begin{dfn}
	Let $G\subseteq\Aut(\Tbb,\treeorder)$ be a group of automorphisms of a pre-directed tree.
	
	\begin{itemize}
		\item $G$ is \emph{focal} if for any $u,v\in \Tbb$ there exists $g\in G$ such that $v\treeorder g.u$. 
		\item The action of $G$ is \emph{d-minimal} if for every $v_1, v_2,w_-,w\in\Tbb$ with $v_1\treeorder v_2$ and $w_-\treeorder w$, there exist $g\in G$ and $w'\in [w_-,w[$ such that $g.[w',w]\subset]v_1,v_2[$. 
		\item The action of $G$ is \emph{simplicial} if $(\Tbb,\treeorder)$ is simplicial and for every $v,w\in\Br(\Tbb)$ the stabilizer $\stab^\Phi (\{v,w\})$ acts trivially on $[v,w]$. 
	\end{itemize}
When $\varphi\colon G\to\Aut(\Tbb,\treeorder)$ is an action on a directed tree, we say that $\Phi$ is \emph{focal}, \emph{d-minimal}, or \emph{simplicial}, according to what $\Phi(G)$ is.
\end{dfn}

\begin{rem}\label{r.dminimal_is_focal} Any d-minimal action (where the letter ``d'' stands for \emph{directionally}) is focal. Indeed, the definition says that it is possible to send every point $w$ inside any interval $[v_1, v_2[$, and moreover this can be done so that any given direction below $w$ (defined by $w_-$) is mapped to the direction defined by the point $v_1$. \end{rem}

\begin{rem} \label{r-no-leaves}
If $(\Tbb, \treeorder)$ admits a focal group of automorphisms, then it has no leaves. 
\end{rem}

Let us now recall from \cite[\S 11.1.6]{BMRT} the notion of horograding for actions on directed trees. 

\begin{dfn}
A \emph{(positive) horograding} of a directed tree $(\Tbb,\treeorder)$ is an increasing map $\pi\colon (\Tbb, \treeorder)\to (\R,<)$ such that for every $u,v\in \Tbb$ verifying $u\treeorder v$, the restriction of $\pi$ to the arc $[u,v]\subset \Tbb$ is an order-preserving bijection onto the interval $[\pi(u),\pi(v)]\subset \R$. Given an action $\Phi\colon G\to \Aut(\Tbb,\treeorder)$, we say that $\Phi$ is \emph{horograded} by an action $j\colon G\to \homeo_0(\R)$ if there exists a $G$-equivariant surjective horograding $\pi\colon\Tbb\to \R$. 
\end{dfn}

A relevant special case of horograding arises in the situation when the action of $G$ on $(\Tbb, \treeorder)$ can be chosen to be isometric with respect to an $\R$-tree metric. Recall that an  $\R$\emph{-tree} is a metric space $(X,d)$ where every pair of points $v,w\in X$ can be joined by a unique arc, and this arc can be chosen to be a geodesic. We say that a directed tree $(\Tbb,\treeorder)$ has a \emph{compatible} metric if $\Tbb$ is endowed with an $\R$-tree metric, for which the subsets of the form $[v,w]$ associated with $\treeorder$ coincide with the geodesic segments. In \cite[\S 11.3.1]{BMRT} it is shown that every directed tree in the sense of Definition \ref{def d-trees} can be endowed with such a compatible $\R$-tree metric. However, in the presence of an action $\varphi\colon G\to\Aut(\Tbb,\treeorder)$ it is not always possible to find $\Phi $-invariant compatible metrics. The following proposition describes when it is the case in terms of the existence of abelian horogradings. Its proof is contained in \cite[Proposition 11.3.1]{BMRT} and \cite[Proposition 11.3.3]{BMRT}. 

\begin{prop}\label{prop.Rtreemetric} Let $\Phi\colon G\to\Aut(\Tbb,\treeorder)$ be a focal action on a directed tree. Then, $\Phi $ preserves a compatible metric on $(\Tbb,\treeorder)$ if and only if $\Phi $ can be horograded by an action by translations $j\colon G\to (\R, +)$. Moreover, if $\Phi $ is simplicial, then $\Phi $ is horograded by a cyclic action. 
\end{prop}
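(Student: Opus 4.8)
The plan is to read a positive horograding $\pi\colon\Tbb\to\R$ as a Busemann-type ``signed height towards the focus $\infty_\Tbb$'' and to pass back and forth between such height functions and compatible metrics, exploiting throughout that a focal action has no leaves (Remark \ref{r-no-leaves}) and that every automorphism preserves the meet, $g.(u\treeup v)=g.u\treeup g.v$. The two implications are essentially dual constructions: from a metric one reads off the height cocycle, and from a height cocycle one reconstructs the metric.

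For the forward implication, assume $\Phi$ preserves a compatible $\R$-tree metric $d$. Fix a basepoint $o$ and set
\[
\pi(x)=d(o,\,o\treeup x)-d(x,\,o\treeup x),
\]
the signed distance from $x$ to the horosphere through $o$. Compatibility guarantees that $\pi$ is strictly increasing, that $\pi(v)-\pi(u)=d(u,v)$ whenever $u\treeorder v$, and that $\pi$ restricts to an order-preserving bijection of each arc $[u,v]$ onto $[\pi(u),\pi(v)]$. The key computation is that the displacement $\pi(g.x)-\pi(x)$ is independent of $x$: for $u\treeorder v$ one has $\pi(v)-\pi(u)=d(u,v)=d(g.u,g.v)=\pi(g.v)-\pi(g.u)$ since $g$ is an isometry with $g.u\treeorder g.v$, and the general case follows by splitting $[x,y]$ at $x\treeup y$ and using $g.(x\treeup y)=g.x\treeup g.y$. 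Hence $c(g):=\pi(g.x)-\pi(x)$ defines a homomorphism $c\colon G\to(\R,+)$, and $\pi$ intertwines $\Phi$ with the translations $j(g)\colon t\mapsto t+c(g)$. Finally, focality supplies $g$ with $x\treeorder g.x$, so $c(g)>0$ and $\pi(g^{\pm n}.x)\to\pm\infty$; as the $\pi$-image of every arc is an interval, $\pi$ is surjective, hence a genuine horograding by translations.

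For the converse, suppose $\pi\colon\Tbb\to\R$ is a surjective $G$-equivariant horograding with $j(g)\colon t\mapsto t+\tau(g)$, and define
\[
d(x,y)=\bigl(\pi(x\treeup y)-\pi(x)\bigr)+\bigl(\pi(x\treeup y)-\pi(y)\bigr).
\]
Each summand is nonnegative because $\pi$ is increasing and $x,y\treeordereq x\treeup y$, vanishing only for $x=y$, so $d$ is a positive symmetric function; its $\Phi$-invariance is immediate from $g.(x\treeup y)=g.x\treeup g.y$ and the cocycle relation $\pi(g.z)=\pi(z)+\tau(g)$, which makes the three $\tau(g)$-contributions cancel. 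The substantial point is that $d$ is an $\R$-tree metric compatible with $\treeorder$. This rests on the trichotomy for a triple $x,y,z$: among $x\treeup y$, $y\treeup z$, $x\treeup z$ two coincide and the third lies $\treeordereq$ below them; inserting this into the displayed formula reduces the four-point ($0$-hyperbolicity) condition to $\pi(\text{lower meet})\ge\pi(y)$, which holds since $y\treeordereq x\treeup y$. One then checks that for $u\treeorder v$ the arc $[u,v]$ has $d$-length $\pi(v)-\pi(u)$ and is parametrized isometrically by $\pi$, so the order-arcs are exactly the geodesics, giving compatibility; that $(\Tbb,d)$ is geodesic and uniquely geodesic follows from the poset tree structure and condition \ref{i-chains-R}. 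I expect this verification to be the main obstacle, since it is where the global tree geometry (the trichotomy of meets and \ref{i-chains-R}) must be used carefully, as opposed to the purely formal cocycle manipulations.

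For the simplicial refinement, I would equip $\Tbb$ with the metric assigning length $1$ to every edge (a maximal arc between consecutive branch points, which are discrete along arcs by simpliciality). This is a compatible $\R$-tree metric, and it is $\Phi$-invariant precisely because $G$ permutes edges while the hypothesis that $\stab^\Phi(\{v,w\})$ acts trivially on $[v,w]$ forbids an element from fixing the endpoints of an edge yet moving its interior; thus each $g$ carries edges isometrically to edges. Applying the forward implication produces a horograding by translations with cocycle $c$, and evaluating $c(g)=\pi(g.v)-\pi(v)$ at a branch point $v$ gives $c(g)=d(v,m)-d(g.v,m)$ with $m=v\treeup g.v$ again a branch point, a difference of edge-counts and hence an integer. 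Therefore $j$ takes values in the integer translations, i.e.\ is cyclic.
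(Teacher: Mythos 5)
Your proof is correct and follows essentially the same route as the paper: the paper's own text only sketches the simplicial case, defining the Busemann-type height $\pi(w)=d(v,v\treeup w)-d(w,v\treeup w)$ from the invariant simplicial metric and observing that the resulting translation cocycle is integer-valued because it is a difference of edge-counts between branching points, and your treatment of that case coincides with this sketch. For the general equivalence the paper simply cites \cite{BMRT} (Propositions 11.3.1 and 11.3.3), and your two dual constructions --- the equivariant height function read off from an invariant compatible metric, and the metric $d(x,y)=\bigl(\pi(x\treeup y)-\pi(x)\bigr)+\bigl(\pi(x\treeup y)-\pi(y)\bigr)$ reconstructed from a horograding by translations --- are exactly the arguments used there, with the surjectivity of $\pi$ correctly extracted from focality.
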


\begin{proof}[Sketch of proof in the simplicial case] 
The simplicial case of the previous proposition will naturally arise in the proof Theorem \ref{t-main}, so let us briefly recall why this implication is true. If the directed tree $\Aut(\Tbb, \treeorder)$ is simplicial, it can be endowed with a natural simplicial distance, obtained by identifying isometrically $[v, w]$ with the interval $[0, 1]$ whenever $v\treeorder w$ are adjacent branching points (that is, branching points such that $]v, w[\cap \Br(\Tbb)=\varnothing$). Fix $v\in\Tbb$, and define $\pi\colon\Tbb\to\R$ as 
\[\pi(w)=d(v,v\treeup w)-d(w,v\treeup w).\]
Notice that $\pi$ is the unique horograding satisfying $\pi(v)=0$, and such that the restriction of $\pi$ to any ray of the form $[w,\infty_\Tbb[$ is an order-preserving isometry. 
Given $g\in G$, choose $u\in \Br(\Tbb)$ such that $g.u$ and $u$ are $\treeorder$-comparable (for instance, this is true for $u=g^{-1}.v \treeup v$ for an arbitrary $v\in \Tbb$), and set $t_g= d(g.u, u)$. Note that  $g$ moves every point in the ray $[u, \infty_\Tbb[$ by a distance $t_g$. It follows that $g$ eventually  acts as a translation by $t_g$ along any ray $[w, \infty_\Tbb[$. Thus  $t_g$ does not depend on the chosen $u$, and the map $g\mapsto t_g$ is a homomorphism  $G\to \Z$. It is also easy to check that 
$\pi(g.w)=\pi(w)+t_g$
for every $w\in \Tbb$. Thus $\pi$ is a horograding of $\Phi $ by the cyclic action defined by $j(g) \colon x\mapsto x+t_g$.\qedhere
\end{proof}

 \subsection{From minimal laminations to directed trees}
 \label{subsection trees}
 
 The next result is a more precise version of \cite[Proposition 11.2.3]{BMRT} for minimal laminations.

\begin{prop}\label{prop.minimalmodeltree} Let $G$ be a finitely generated group and let $\varphi\colon G\to\homeo_0(\R)$ be a minimal laminar action. Let $\mathcal{L}$ be a minimal $\varphi$-invariant lamination. Then, there exist an action $\Phi\colon G\to\Aut(\Tbb,\prec)$  on a directed tree and a non-decreasing $G$-equivariant map  $\iota \colon (\mathcal{L}, \subset)\to (\Tbb, \treeorder)$ satisfying the following:
\begin{itemize}
\item if the lamination $\mathcal{L}$ is discrete, then $\Phi $ is simplicial and has only one orbit of branching points;

\item if the lamination $\mathcal{L}$ is non-discrete, then $\Phi $ is  d-minimal.

\end{itemize}

\end{prop}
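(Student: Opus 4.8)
The plan is to obtain the triple $(\Tbb,\Phi,\iota)$ from the general construction of \cite[Proposition 11.2.3]{BMRT} applied to the minimal invariant lamination $\mathcal{L}$: this associates to $\mathcal{L}$ a directed tree $(\Tbb,\treeorder)$ with focus $\infty_\Tbb$, an action $\Phi\colon G\to\Aut(\Tbb,\treeorder)$, and a non-decreasing $G$-equivariant map $\iota\colon(\mathcal{L},\subseteq)\to(\Tbb,\treeorder)$, in such a way that $\iota(\mathcal{L})$ is dense in $\Tbb$ and the local structure of $\Tbb$ reflects the lamination: the directions below $\iota(l)$ correspond to the maximal proper sub-leaves (children) of $l$, while $\iota(l)$ always has an upward direction toward $\infty_\Tbb$. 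Since $\varphi$ is focal with respect to $\mathcal{L}$ (Remark \ref{r-minimal-implies-focal}), the action $\Phi$ is focal, so $\Tbb$ has no leaves (Remark \ref{r-no-leaves}). All the content then lies in the refinement of the conclusion under minimality of $\mathcal{L}$, and it is natural to argue along the dichotomy of Remark \ref{r-discrete-minimal}.

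In the discrete case I would argue as follows. By Remark \ref{r-discrete-minimal}, $\mathcal{L}$ is a single $\varphi$-orbit and the set of leaves containing a given $l$ is order-isomorphic to $\N$, so each $l$ has a successor and is a proper sub-leaf of it; by homogeneity no leaf is $\subseteq$-minimal, so every $l$ has proper sub-leaves. Since $\mathcal{L}$ is covering, the points of $l$ outside a single child are themselves covered by leaves contained in $l$, forcing at least two distinct children of $l$. Hence every $\iota(l)$ has at least three incident directions and is a branching point, giving $\Br(\Tbb)=\iota(\mathcal{L})$, a single $\Phi$-orbit. Discreteness of $\mathcal{L}$ in $\R^{(2)}$ makes the branching points along any arc the finitely many leaves of the corresponding nested chain, so $(\Tbb,\treeorder)$ is simplicial. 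Finally, as $\varphi$ preserves the orientation of $\R$, no element of $\Phi(G)$ can exchange two children of a leaf (they are ordered left to right); thus an element fixing two branching points $v,w$ fixes their join $v\treeup w$ and, being an order-preserving bijection of each of the two finite chains forming $[v,w]$ that fixes its endpoints, acts as the identity on $[v,w]$. Therefore $\Phi$ is simplicial in the sense of the definition.

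In the non-discrete case, where $\mathcal{L}$ has no isolated points and all its $\varphi$-orbits are dense in $\R^{(2)}$ (Remark \ref{r-discrete-minimal}), I would prove d-minimality directly. Fix $v_1\treeorder v_2$ and $w_-\treeorder w$. Using density of $\iota(\mathcal{L})$ and the absence of isolated points, choose leaves $c^-\subsetneq c^+$ with $v_1\treeorder\iota(c^-)$ and $\iota(c^+)\treeorder v_2$, so that $[\iota(c^-),\iota(c^+)]\subset\,]v_1,v_2[$. On the side of $w$, choose nested leaves $m\subsetneq M$ with $\iota(m)\in[w_-,w[$ and $\iota(m)\treeorder w\treeorder\iota(M)$, so that $(m,M)$ encodes a neighborhood of $w$ together with the direction of $w_-$. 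It then suffices to find $g\in G$ with $c^-\subseteq g.m$ and $g.M\subseteq c^+$: indeed, $\iota(g.m)$ and $\iota(g.M)$ then lie in $]v_1,v_2[$, and since $w$ lies between $m$ and $M$, equivariance of $\iota$ gives $v_1\treeorder\iota(g.m)\treeorder g.w\treeorder\iota(g.M)\treeorder v_2$; taking $w'=\iota(m)\in[w_-,w[$, the arc $g.[w',w]=[\iota(g.m),g.w]$ lies in $]v_1,v_2[$, as required. To produce such a $g$, I would combine minimality of $\mathcal{L}$, which by density lets us position $g_0.m$ so close to a leaf $c$ with $c^-\subsetneq c\subsetneq c^+$ that $c^-\subseteq g_0.m\subseteq c^+$, with the contracting dynamics afforded by focality (the pseudo-homotheties of Definition \ref{d-expanding-homothety}), used to shrink the image of the larger leaf $M$ inside $c^+$ while keeping $g.m\supseteq c^-$.

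The discrete case is essentially combinatorial once one knows that every leaf has at least two children, and the stabilizer condition there is a direct consequence of orientation-preservation. The main obstacle I expect is the non-discrete case, precisely the last step: controlling a nested \emph{pair} of leaves simultaneously, equivalently carrying an entire tree-neighborhood of $w$, and not merely a single leaf, into the target arc $]v_1,v_2[$. Density of single-leaf orbits (which is all that minimality of $\mathcal{L}$ directly provides) positions $g.m$ but gives no control on $g.M$; what is genuinely needed is the proximal, contracting behavior of focal actions, whereby a suitable element squeezes the pair $(m,M)$ into the window $(c^-,c^+)$. Making this squeeze precise, while simultaneously tracking the direction of $w_-$ so that the germ $[w',w]$ lands on the $v_1$-side of the arc, is the delicate heart of the argument.
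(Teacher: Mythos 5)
Your discrete case is essentially the paper's argument (build the simplicial tree on the vertex set $\mathcal{L}$ using successors, and use transitivity of the orbit), though your justification that every leaf has at least two children does not follow from the covering property alone: a point of $l$ could a priori be covered only by leaves \emph{containing} $l$. (One can rule out the one-child and zero-child cases by showing that a single $\Z$-chain of leaves would produce a bounded invariant set, contradicting minimality of $\varphi$; the paper itself only needs $\Br(\Tbb)\subseteq\iota(\mathcal{L})$ plus transitivity, so this point is not essential.) The orientation-preservation argument for the stabilizer condition is fine.

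The genuine gap is in the non-discrete case, and you have correctly located it yourself: minimality of $\mathcal{L}$ gives density of single-leaf orbits in $\R^{(2)}$, not of orbits of nested \emph{pairs}, so positioning $g.m$ gives no control on $g.M$. Your proposed fix --- squeezing the pair $(m,M)$ into the window $(c^-,c^+)$ using ``contracting pseudo-homotheties afforded by focality'' --- does not work as stated: for a general finitely generated group with a minimal laminar action there is no guarantee that $\varphi(G)$ contains any pseudo-homothety (every element may be totally bounded; the existence of pseudo-homotheties is only established later, for solvable groups, via Lemma \ref{l-pseudohomtothety-exist}, which itself depends on this proposition --- so invoking it here would be circular). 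The paper avoids the two-leaf control problem entirely by an order-theoretic device. It builds the tree explicitly as $\mathcal{L}^*=\{l^*\}$ where $l^*=\Int\bigl(\bigcap_{k\supsetneq l}k\bigr)$, i.e.\ the leaves accumulated from above, and then, given $k_1\subsetneq k_2$, replaces the pair by the single leaf $k_3'=\bigcup\{k\in\mathcal{L}: k_1\subseteq k\subsetneq k_2\}$, which is accumulated from below and has the key property that $k_2$ is either $k_3'$ itself or its immediate successor. Placing $g.k_3'$ strictly between $l_1$ and a target $l_2$ chosen to be accumulated from below then \emph{automatically} forces $g.k_2\subsetneq l_2$, because the successor of $g.k_3'$ cannot jump past $l_2$. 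Thus the ``squeeze'' you flag as the delicate heart of the argument is resolved not by any contracting dynamics but by exploiting the successor structure of a minimal lamination; without this (or an equivalent) idea, your proof of d-minimality is incomplete.
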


In the proof, we will use a well-known  result of Cantor  (see Jech \cite[Theorem 4.3]{Jech}): up to isomorphism, $(\R, <)$ is the unique totally ordered set  that has no maximum or minimum, is Dedekind complete (i.e., every subset with a lower bound has a greatest lower bound), and admits a countable order-dense subset  (i.e.\ a countable subset $Q$ such that for every $a< b$, there exists $q\in Q$ such that $a< q< b$).
\begin{proof}[Proof of Proposition \ref{prop.minimalmodeltree}] We first treat the case where $\mathcal{L}$ is discrete. In this case,   the action of $G$ on $\mathcal{L}$ is transitive, and each leaf $l_0$ has a well-defined successor $l_1$ (see Remark \ref{r-discrete-minimal}).  We  consider the simplicial tree $\Tbb$  with vertex set $\mathcal{L}$, obtained by gluing an edge (isomorphic to $[0,1]$) between every leaf and each successor, and let $\iota\colon \mathcal{L}\to \Tbb$ be the inclusion. We endow $\Tbb$ with the natural order $\treeorder$ that extends the order $\subseteq$ on $\mathcal{L}$. It is easy to check that $(\Tbb, \treeorder)$ is a simplicial directed tree in the sense of Definition \ref{def d-trees}.  Moreover, there is a unique simplicial action $\Phi\colon G\to \Aut(\Tbb, \treeorder)$ that extends the action on $\mathcal{L}$, which is focal since the action on $\mathcal{L}$ is transitive. Finally note that each branching point of $\Tbb$ is in $\iota(\mathcal{L})$, and since the action on $\mathcal{L}$ is transitive, $\Phi $ is transitive on branching points.

Assume now that $\mathcal{L}$ is non-discrete, and hence has no isolated point (Remark \ref{r-discrete-minimal}). Say that a leaf $l\in\mathcal{L}$ is \emph{accumulated from above} if the subset $\{k\in\mathcal{L}:k\supsetneq l\}$ accumulates on $l$. Analogously we define when a leaf is \emph{accumulated from below}. 
Given $l\in\mathcal{L}$ write \[\textstyle l^\ast=\mathsf{Int}\left(\bigcap_{k\in\mathcal{L},\,k\supsetneq l}k\right),\] 
and note that $l^*\in \Lcal$ as $\Lcal$ is closed in $\R^{(2)}$.
\begin{claim}\label{claim.defstar} The subset $\mathcal{L}^\ast:=\{l^\ast:l\in\mathcal{L}\}$ coincides with the subset of leaves in $\Lcal$ accumulated from above, and the equality $(l^\ast)^\ast=l^\ast$ holds for every $l\in\mathcal{L}$. 
\end{claim}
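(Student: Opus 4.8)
The plan is to reduce both assertions to a single idempotency statement, and then to prove that statement using the absence of isolated points.

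First I would record that the family $\{k\in\mathcal{L} : k\supseteq l\}$ is totally ordered by inclusion: any two such leaves both contain $l$, hence intersect, and by the non-crossing property they are nested. Writing a generic leaf of this family as $(a_k,b_k)$ and $l=(a,b)$, I would set $\alpha=\sup_k a_k\le a$ and $\beta=\inf_k b_k\ge b$, so that $l^*=\mathsf{Int}\!\left(\bigcap_{k\supsetneq l}k\right)=(\alpha,\beta)$; recall $l\subseteq l^*$ and $l^*\in\mathcal{L}$ since $\mathcal{L}$ is closed. From this description I obtain a dichotomy: either $\alpha=a$ and $\beta=b$, which happens exactly when $l$ is accumulated from above and gives $l^*=l$; or $l^*\supsetneq l$, in which case $l^*$ is a lower bound of $\{k\supsetneq l\}$ lying in the family, i.e.\ $l^*$ is the successor of $l$. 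This dichotomy already yields one inclusion, $\{\text{leaves accumulated from above}\}\subseteq\mathcal{L}^*$ (each such leaf equals its own star), and reduces everything to proving that every $l^*$ is accumulated from above; by the dichotomy applied to $l^*$ this is equivalent to the idempotency $(l^*)^*=l^*$.

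The heart of the argument is therefore to show that $l^*$ is accumulated from above, the only nontrivial case being $l^*\supsetneq l$, so that $l^*$ is the successor of $l$. Here I would use crucially that $\mathcal{L}$, being a non-discrete minimal lamination, has no isolated points, so there is a sequence of pairwise distinct leaves $v_n\to l^*$ in $\R^{(2)}$. Since the limit interval $l^*$ is nondegenerate, a leaf disjoint from $l^*$ cannot converge to it, as its endpoints would force an eventual overlap; so for large $n$ each $v_n$ is nested with $l^*$. I would then rule out accumulation from below: if $k\subsetneq l^*$ and $k\to l^*$, I compare $k$ with $l$ via non-crossing. The case $k\supseteq l$ places $k$ strictly between $l$ and its successor $l^*$, which is impossible; the case $k\subseteq l$ keeps an endpoint of $k$ bounded away from the corresponding endpoint of $l^*$, contradicting $k\to l^*$; and the disjoint case likewise traps an endpoint of $k$. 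Hence the $v_n$ must eventually satisfy $v_n\supsetneq l^*$, i.e.\ $l^*$ is accumulated from above.

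Putting these together, $\mathcal{L}^*=\{l^* : l\in\mathcal{L}\}$ consists exactly of the leaves fixed by the star operation, which by the dichotomy are precisely the leaves accumulated from above, giving the first assertion; and $(l^*)^*=l^*$ is the idempotency just established, giving the second. I expect the main obstacle to be the second paragraph, specifically the bookkeeping ruling out that $l^*$ is accumulated only from below (or is isolated): the delicate sub-cases arise when $l$ and $l^*$ share an endpoint, where one must invoke non-crossing rather than a naive endpoint comparison. This is exactly the point where both the no-isolated-points consequence of minimality and the non-crossing property of the lamination are indispensable.
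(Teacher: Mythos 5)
Your proposal is correct and follows essentially the same route as the paper: the dichotomy $l^*=l$ (equivalent to accumulation from above, via the total ordering of $\{k\supseteq l\}$) versus $l^*$ being the successor of $l$, then ruling out accumulation from below and invoking the absence of isolated points to conclude that $l^*$ is accumulated from above. The only difference is that you spell out the non-crossing case analysis that the paper leaves implicit in the assertion ``$k$ is not accumulated from below.''
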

\begin{proof}[Proof of claim]
	First notice that, by definition, if $l\in \Lcal$ is accumulated from above, then $l=l^\ast$ and therefore $l\in\mathcal{L}^\ast$. Take now $k=l^\ast$ in $\mathcal{L}^\ast$. If $l=k$ then, by definition again, $k$ is accumulated from above. On the other hand, when $l\neq k$, the leaf $k$ is not accumulated from below. Thus, since $\mathcal{L}$ is perfect, $k$ must be accumulated from above. Also notice that $k^\ast=k$ holds in both cases.
\end{proof} 

We want to show that $(\Tbb, \treeorder):=(\Lcal^*,\subset)$ is the desired directed tree, and the action $\Phi $ induced by $\varphi$ is the desired d-minimal action. The map $\iota$ in the statement will then be given by $\iota(l)=l^\ast$. Let us begin by verifying that $(\Lcal^*,\subset)$ is a directed tree. 

We start with condition \ref{i-chains-R}.  As a topological space, $\mathcal{L}$ is separable, thus it has a countable dense subset $\mathcal{Q}$. Set $\mathcal{Q}^\ast=\{q^\ast : q\in \mathcal{Q}\}$. Consider any two leaves $l_1\subsetneq l_2$ in $\mathcal{L}^\ast$. Using that $l_1$ is accumulated from above, we can find  $l_3\in \mathcal{L}^\ast$ such that $l_1\subsetneq l_3\subsetneq l_2$. Now, the set $\{k\in \mathcal{L}: l_1 \subsetneq k\subsetneq l_3\}$ is open in $\mathcal{L}$ and non-empty (using again that $l_1$ is accumulated from above), thus contains an element $q\in \mathcal{Q}$. It follows that $l_1 \subsetneq q^\ast \subseteq l_3\subsetneq l_2$; as $l_1\subsetneq l_2$ were arbitrary, this gives \ref{i-chains-R}.

We now check condition \ref{i-tree}: for fixed $l\in \Lcal^*$, we need to show that the subset $\mathcal R=\{k\in \Lcal^*:l\subsetneq k\}$ is order-isomorphic to $(\R, <)$. We do so by applying Cantor's characterization. The subset $\mathcal R$ has no maximum or minimum (since $l$ is accumulated from above), and $\mathcal R\cap \mathcal{Q}^\ast$ is a countable order-dense subset by the paragraph above. To check that $(\mathcal R, \subset)$ is Dedekind complete, let $S$ be a subset admitting a lower bound $k_0\in \mathcal R$. Define $k_1:=\Int\left (\bigcap_{k\in S}k\right )$. Then $k_1$ is a non-empty interval (as it contains $k_0$), and  $k_1\in\mathcal{L}$ since $\mathcal{L}$ is closed. If $k_1\in S$, then it is a minimum element of $S$. Else $k_1$ must be non-trivially accumulated from above, so $k_1\in \mathcal{L}^\ast$, and actually $k_1\in \mathcal R$ as $k_1\supseteq k_0\supsetneq l$. In either case, we conclude that $k_1$ is a greatest lower bound to $S$ in $\mathcal R$. Condition \ref{i-tree} follows.

Finally, we check condition \ref{i-directed}. For this, fix $l_1,l_2\in \Lcal^*$ and define 
\[\textstyle k_0:=\Int\left (\bigcap_{k\in\mathcal{L},\,(l_1\cup l_2)\subseteq k }k \right).\] We have $k_0\in \mathcal{L}$, and we need to show that $k_0\in \Lcal^*$, or equivalently, by the claim, that $k_0$ is accumulated from above. If not, since $\mathcal{L}$ is perfect, $k_0$ must be accumulated from below by leaves in $\Lcal^*$. Taking a leaf $k_0'\in \Lcal^*$ strictly contained in $k_0$ and sufficiently close to $k_0$, we must have $k_0'\cap l_i\neq \varnothing$ for $i\in \{1,2\}$, and by the non-crossing condition, this implies $k_0'\supset (l_1\cup l_2)$, contradicting the choice of $k_0$.

It remains to prove that the action $\Phi\colon G\to \Aut(\Lcal^*,\subset)$ induced by $\varphi$ is d-minimal.  For this, take leaves $l_1\subsetneq l_2$ and $k_1\subsetneq k_2$ in $\mathcal{L}^\ast$, and we want to find an element $g\in G$ and a leaf $k_3\in [k_1,k_2]$ such that $g.[k_3,k_2]\subset ]l_1,l_2[$. Taking a smaller $l_2$ if necessary, we can assume that $l_2$ is also accumulated from below. Consider the leaf $k_3'$ in $\Lcal$ given by
\[{k}_3':=\bigcup_{k\in\mathcal{L},\,k_1\subset k\subsetneq k_2}k,\]
and note that $ k_3'$ is accumulated from below. Again, by the claim and minimality of $\Lcal$, there exists an element $g\in G$ such that
$l_1\subsetneq g. k_3' \subsetneq l_2$. As $l_2$ is accumulated from below, and $k_2$ is either $k_3'$ or a successor of $k_3'$ in $(\mathcal{L}, \subset)$, we must have $l_1\subsetneq g.k_2 \subsetneq l_2$. As $k_3'$ is accumulated from below, we can also find a leaf $k_3\in \Lcal^*$ such that $l_1\subsetneq g.k_3\subsetneq g. k_3'$. This gives the desired conclusion.
\end{proof}

\begin{rem}\label{r-tree-horograding}
From Proposition \ref{prop.minimalmodeltree}, it follows  that in order to find a horograding of the laminar action $\varphi$ by some $j\colon G\to \homeo_0(\R)$, it is enough to find a horograding of the associated action $\Phi \colon G\to \Aut(\Tbb, \treeorder)$ by $j$. Indeed if $\pi\colon \Tbb\to \R$ is such a horograding, then $\hor:=\pi\circ \iota\colon \mathcal{L}\to \R$ defines a horograding of $\varphi$ by $j$.
\end{rem}

\section{Horocyclic subgroups in focal actions} 
\label{sec horocyclic}

In this section we explain a technical mechanism that allows, given a d-minimal action $\Phi  \colon G\to \Aut(\Tbb, \treeorder)$  on a directed tree and a suitable normal subgroup $N$ of $G$, to ``mod out $N$'' and  construct an action of $G/N$ on a smaller  directed tree. This mechanism will be the main tool to study laminar actions of solvable groups, by successively simplifying them.
\subsection{Horocyclic subgroups}

\begin{dfn} 
	An automorphism $g\in \Aut(\Tbb,\treeorder)$ of a directed tree is \emph{elliptic} if for every $v\in\Tbb$ verifying $g.v\treeordereq v$, one has $g.v=v$.
	A subgroup $G\leq \Aut(\Tbb,\treeorder)$ is {horocyclic}, if it acts without fixed points and contains only elliptic elements. Similarly, when $\Phi\colon G\to \Aut(\Tbb,\treeorder)$ is an action such that $\Phi(G)$ is horocyclic, we say that $\Phi$ is {horocyclic}.
\end{dfn}

\begin{rem}\label{r.isometries}
	When the directed tree $(\Tbb,\treeorder)$ is simplicial (or more generally, when an action preserves an $\R$-tree metric), the definition above corresponds to the classical notion of elliptic isometry. Indeed, for isometric actions on simplicial trees, every automorphism is either elliptic (which is equivalent to having a fixed point in $\Tbb$) or \emph{hyperbolic}. Recall that hyperbolic automorphisms admit a unique translation axis (an embedded line on which the automorphism acts as a translation), and in the case of directed trees, the axis contains the focus in its boundary.
\end{rem}

\begin{prop}\label{p-focal-normal}
	Let $G\leq \Aut(\Tbb, \treeorder)$ be a subgroup of automorphisms of a directed tree, whose action is either simplicial and transtive on branching points, or d-minimal. Then any non-trivial normal subgroup of $G$ is either focal or horocyclic. \end{prop}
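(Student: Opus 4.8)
Fix a nontrivial normal subgroup $N\unlhd G$. In both cases the ambient action of $G$ is itself focal: for d-minimal actions this is Remark~\ref{r.dminimal_is_focal}, and in the simplicial case it follows from transitivity on branching points (together with the absence of leaves, Remark~\ref{r-no-leaves}). The plan is then to argue by the dichotomy: \emph{either every element of $N$ is elliptic, or $N$ contains a non-elliptic element}. I would prove that the first alternative forces $N$ to be horocyclic, and the second forces $N$ to be focal; since these are the only two possibilities, this proves the statement.

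First I would treat the case where every element of $N$ is elliptic, where it suffices to show that $N$ has no global fixed point. The key preliminary observation is that the global fixed-point set $\Fix^\Phi(N)$ is a $G$-invariant subtree. Its $G$-invariance follows from normality: if $p$ is fixed by $N$ then for $n\in N$ one has $n.(g.p)=g.\big((g^{-1}ng).p\big)=g.p$. Its convexity is where the ellipticity hypothesis is used essentially: if $n\in N$ fixes two comparable points $p\treeorder q$, then $n$ preserves the totally ordered arc $[p,q]$, and for $r\in[p,q]$ the possibility $n.r\treeorder r$ is ruled out because $n$ is elliptic while $n.r\treeorderop r$ is ruled out because $n^{-1}$ is elliptic; hence $n.r=r$. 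Since meets are preserved, the fixed set of each $n$, and so of $N$, is convex. Now a nonempty $G$-invariant subtree must be all of $\Tbb$: if it is a single point, that point is globally fixed by $G$, contradicting focality; otherwise it contains an arc $[v_1,v_2]$, and d-minimality lets us throw, for every $w$ and every $w_-\treeorder w$, a subarc $[w',w]$ into $]v_1,v_2[$, so invariance gives $w\in\Fix^\Phi(N)$, forcing $\Fix^\Phi(N)=\Tbb$ and hence $N$ trivial, a contradiction. (In the simplicial case one runs the same exhaustion through the single orbit of branching points.) Thus $\Fix^\Phi(N)=\varnothing$ and $N$ is horocyclic.

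Next I would treat the case where $N$ contains a non-elliptic element $h$, and show $N$ is focal. Since $h$ is non-elliptic there is $v_0$ with $h.v_0\treeorder v_0$; applying powers of $h$ gives a bi-infinite $\treeorder$-chain whose upward union is a ray to the focus, the axis of $h$, along which one of $h^{\pm1}$ translates towards $\infty_\Tbb$. The crucial point is that the set
\[
\mathcal A_N=\bigcup\{\,\mathrm{axis}(n): n\in N\ \text{non-elliptic}\,\}
\]
is $G$-invariant, because $g$ conjugates $n$ to $gng^{-1}\in N$ and carries $\mathrm{axis}(n)$ to the axis of $gng^{-1}$. As $\mathcal A_N$ contains the ray $\mathrm{axis}(h)$, the very same propagation as above (d-minimality, respectively transitivity on branching points) shows $\mathcal A_N=\Tbb$. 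Hence every $u\in\Tbb$ lies on the axis of some non-elliptic $n_0\in N$; choosing the sign so that $n_0^{k}$ translates this axis towards the focus, and letting $c$ be the point where the ray $[v,\infty_\Tbb[$ meets the axis (so $v\treeordereq c$), the points $n_0^{k}.u$ climb the axis and eventually lie above $c$, hence above $v$. Therefore for all $u,v$ some element of $N$ sends $u$ above $v$, i.e.\ $N$ is focal.

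The main obstacle is the exhaustion step shared by the two cases, namely upgrading an arc-level statement to the conclusion that a $G$-invariant object fills $\Tbb$. In the d-minimal case this is precisely what d-minimality is engineered to provide, since a short arc below any prescribed point can be mapped inside any prescribed arc; in the simplicial case it has to be routed through the fact that the branching points form a single $G$-orbit whose convex hull exhausts the tree. A secondary delicate point is the convexity of $\Fix^\Phi(N)$, which genuinely relies on the ellipticity of $N$ and would fail for arbitrary order-automorphisms of a directed tree.
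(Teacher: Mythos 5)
Your first half is sound: showing that when every element of $N$ is elliptic the fixed-point set $\Fix(N)$ is a $G$-invariant subtree (convex by ellipticity) which d-minimality, respectively transitivity on branching points, forces to be empty or all of $\Tbb$ — this is essentially the paper's opening step, and your simplicial case of the second half also matches the paper's (union of axes of hyperbolic elements). The genuine gap is in the d-minimal case of your second half. You assert that a non-elliptic $h$ (say $h.v_0\treeorder v_0$) has an ``axis'' whose upward union is a ray to the focus, and later that a point $u$ on the axis of a non-elliptic $n_0\in N$ satisfies that $n_0^{k}.u$ eventually rises above any prescribed point. Neither is justified for order-automorphisms of a general directed tree: the increasing chain $(h^{-k}.v_0)_{k\ge 0}$ may be bounded above in the ray $[v_0,\infty_\Tbb[$, in which case its supremum $p$ exists by condition (T1) and is a fixed point of $h$, so the ``axis'' is a bounded arc that never reaches the focus. (Non-triviality of $N$ rules out \emph{global} fixed points of $N$, not fixed points of individual elements.) The identification of non-elliptic with ``translates towards $\infty_\Tbb$ along an invariant ray'' is a feature of the simplicial/isometric setting (Remark~\ref{r.isometries}) and does not transfer. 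Concretely, $(\R,<)$ is a directed tree not excluded by the statement, a minimal action of $G$ on $\R$ is d-minimal, and a normal subgroup all of whose elements have fixed points (e.g.\ the commutator subgroup of Thompson's group $F$, every element of which is compactly supported) consists entirely of non-elliptic or trivial elements with bounded forward orbits; such an $N$ is focal, but no single $n_0^{k}.u$ ever climbs past $\sup\Fix(n_0)$. Focality of $N$ is a collective phenomenon, not obtainable by iterating one element.

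The paper's d-minimal argument avoids this by contradiction: assuming $N$ is neither horocyclic nor focal, non-horocyclicity yields one strict displacement $v_0\treeorder h.v_0$, and d-minimality is used to \emph{conjugate} this displacement into an arbitrarily prescribed position — precisely, for any $v'\treeorder v$ one finds $g\in G$ and $w\in[v',v[$ with $g.[w,v]\subset\,]v_0,h.v_0[$, so that $k=g^{-1}hg\in N$ pushes $w$ strictly above $v$. Applying this with $v=\sup_{\treeorder}(N.u\cap[u,\infty_\Tbb[)$, which exists because $N$ is assumed non-focal, produces a point of $N.u$ above that supremum, a contradiction. This conjugation step is what replaces your appeal to an unbounded axis, and it is the piece your proposal is missing.
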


\begin{proof}
	Let $N\lhd G$ be a normal subgroup. First note that  $N$ acts without fixed points. Indeed note that the set of fixed points of $N$ is $G$-invariant, and assume that it is non-empty. In the d-minimal case, $G$-invariance implies that it must intersect every arc $]v, w[$, and it follows that $N$ must be trivial. In the simplicial case, the set of fixed points of $N$ is actually a simplicial subtree, and since the $G$-action is transitive on branching points, we also deduce that $N$ is trivial. In either case, since $N$ is non-trivial, it has no fixed points.
	
	Consider first the case where the action of $G$ is  simplicial.  We distinguish two cases according to whether the image of $N$ contains a hyperbolic automorphism of $\Tbb$ or not  (see Remark \ref{r.isometries}). If $N$ does not contain any hyperbolic element, then it is horocyclic. If $N$ does contain a hyperbolic element, let $\mathbb S$ be the union of the axes of its hyperbolic elements. Note that $\mathbb S$ is a directed subtree, and the action of $N$ on $\mathbb S$ is focal. Indeed, if $v\in \mathbb{S}$, then there exists a hyperbolic element $g\in G$  whose axis contains $v$, and thus it  necessarily contains the ray $[v,\infty_\Tbb[$ (since the point $\infty_\Tbb$ can be interpreted as an end of the simplicial tree $\Tbb$ fixed by the group $G$). As $N$ is normal, $\mathbb S$ is also $G$-invariant. Finally, take a point $v\in \mathbb S$ and note that as $G$ is focal, for any $u\in \Tbb$ there exists $g\in G$ such that $g.u\in [v,\infty_\Tbb[\subset \mathbb S$. This shows that $\Tbb=\mathbb S$, and we have already observed that the action of $N$ on $\Tbb=\mathbb S$ is focal, as desired.

	Consider next the case where the action of $G$ is d-minimal and assume by contradiction that $N$ is neither horocyclic nor focal.
	
	\begin{claim}
		For any $v\in \Tbb$ and $v'\treeorder v$, there exist $k\in N$ and $w\in [v', v[$ such that $v\treeorder k.w$.
	\end{claim}
	
	\begin{proof}
		Since $N$ is not horocyclic, we can choose a point $v_0\in \Tbb$ and $h\in N$ such that $v_0\treeorder h.v_0$. By d-minimality, for given $v'\treeorder v$, we can find $w\in [v', v[$ and $g\in G$ such that $g.[w,v]\subset\, ]v_0,h.v_0[$. By the choice of $h$, we must then have $g.v\treeorder h.v_0\treeorder hg.w$. Applying $g^{-1}$, we get $v\treeorder g^{-1}hg.w$. As $N$ is normal and $h\in N$, the element $k:=g^{-1}hg\in N$ is the element  we were looking for.
	\end{proof}
	
	Now since $N$ is not focal,  there exists $u\in \Tbb$ such that $N.u\cap [u, \infty_\Tbb[$ is bounded above; set $v=\sup_{\treeorder} (N.u\cap [u, \infty_\Tbb[)$. After the claim, we can take $k\in N$ and $w\in [u, v[$ such that $v\treeorder k.w$. Note that the relation  above  holds for any $w'\in [w,v]$. In particular, for any $w'\in N.u\cap [w,\infty_\Tbb[$, we will have $v\treeorder k.w'$, but as $k.w'\in N.u\cap [u,\infty_\Tbb[$ and $v=\sup_{\treeorder} (N.u\cap [u, \infty_\Tbb[)$, we have reached a contradiction.
\end{proof}

From the previous proposition, we obtain the following.

\begin{cor}\label{c-normal-focal-centralizer}
	Let $G\leq \Aut(\Tbb,\treeorder)$ be a focal subgroup of automorphisms of a directed tree, and let $N$ be a non-trivial normal subgroup of $G$ whose centralizer in $G$ is non-trivial. Assume further that the action of $G$ is either simplicial or d-minimal, and that $\Tbb\neq\R$. Then, $N$ is horocyclic. 
\end{cor}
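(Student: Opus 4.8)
The plan is to invoke the dichotomy of Proposition \ref{p-focal-normal}: any non-trivial normal subgroup of $G$ is either focal or horocyclic, so it suffices to rule out that $N$ is focal (to apply the proposition in the simplicial case one uses that the simplicial actions produced by Proposition \ref{prop.minimalmodeltree} are transitive on branching points). Assume then, for contradiction, that $N$ is focal, and fix a non-trivial element $c$ in the centralizer $C_G(N)$. First I would record that a focal group always contains a non-elliptic element: applying the focal property with $u=v$ yields $g\in N$ and $v$ with $v\treeorder g.v$, and then $g^{-1}$ is non-elliptic, since $g^{-1}.(g.v)=v\treeorder g.v$. Next I would use that every non-elliptic $m$ fixing the focus $\infty_\Tbb$ has a canonical invariant axis $A_m$ — a $\treeorder$-line with upper end $\infty_\Tbb$ along which $m$ translates toward the focus (equivalently, a canonical repelling end below) — which is the content of Remark \ref{r.isometries} in the simplicial/metric case and its order-theoretic analogue in general.

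The key observation is that, since $c$ centralizes $N$, for every non-elliptic $m\in N$ we have $cmc^{-1}=m$, so $c$ maps $A_m$ to the axis of $cmc^{-1}=m$, i.e.\ $c.A_m=A_m$. Now the hypothesis $\Tbb\neq\R$ enters. If all conjugates $gng^{-1}$ (for a fixed non-elliptic $n\in N$ and $g\in G$) shared the same axis, then $G$ would preserve a single $\treeorder$-line, which is incompatible with d-minimality, and with simplicial transitivity on branching points unless there are none at all; in either case this forces $\Tbb=\R$. Hence there are two non-elliptic elements $m_1,m_2\in N$ whose axes diverge. Since both are cofinal toward $\infty_\Tbb$, their intersection is an upper ray $A_{m_1}\cap A_{m_2}=[b,\infty_\Tbb[$ whose lowest point $b$ is a branching point; as $c$ preserves each axis it preserves this ray, and being order-preserving it fixes its minimum, so $c.b=b$. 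Applying the same reasoning to the conjugates $gm_ig^{-1}\in N$ (which lie in $N$ by normality and still commute with $c$), I obtain $c.(g.b)=g.b$ for every $g\in G$, and thus $c$ fixes the entire orbit $G.b\subset\Br(\Tbb)$.

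It then remains to upgrade ``$c$ fixes $G.b$'' to $c=\Id$, which contradicts $c\neq\Id$. In the d-minimal case, d-minimality allows one to map some $[w',b]$ into any prescribed arc $]v_1,v_2[$, so $G.b$ meets every arc; since $\Fix(c)$ is a sub-tree and $c$, as an order-automorphism, preserves the relevant suprema and infima, meeting every arc forces $\Fix(c)=\Tbb$. In the simplicial case, transitivity on branching points gives $G.b=\Br(\Tbb)$, so $c$ fixes every branching point; combining this with the cyclic horograding $\pi$ of Proposition \ref{prop.Rtreemetric} (which satisfies $\pi\circ c=\pi+t_c$), the relation $c.b=b$ forces $t_c=0$, whence $\pi\circ c=\pi$, and together with $c$ fixing all branching points and preserving $\treeorder$ this forces $c$ to fix each arc between adjacent branching points pointwise, i.e.\ $c=\Id$. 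I expect the main obstacle to be exactly this last mechanism made uniform across the two regimes: pinning down the canonical $c$-invariant axis (or repelling end) of a non-elliptic automorphism in the purely order-theoretic d-minimal setting, where no invariant metric is available, and checking that preservation of all these axes, together with $\Tbb\neq\R$, genuinely produces a fixed branching point whose orbit is large enough to conclude $c=\Id$.
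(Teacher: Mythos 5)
Your opening reduction coincides with the paper's: by Proposition \ref{p-focal-normal} the normal subgroup $N$ is either focal or horocyclic, so it suffices to rule out the focal case (and your parenthetical about transitivity on branching points in the simplicial case is a fair reading of how the proposition gets applied). The paper rules out the focal case by quoting Lemma \ref{l-centralizer-focal} (a focal subgroup of $G$ has trivial centralizer when $\Tbb\neq\R$ and the action is d-minimal or simplicial), so what you propose is in effect a new proof of that lemma --- and it has a genuine gap, precisely the one you flag at the end. In the d-minimal case there is no invariant metric, and a non-elliptic automorphism $m$ of a directed tree need not carry a \emph{canonical cofinal} axis. The natural candidate $\bigcup_{n\ge 0}[m^{n}.v,m^{-n}.v]$, built from a point $v$ with $m.v\treeorder v$, depends on the base point and can fail to be cofinal: the increasing sequence $(m^{-n}.v)$ lives in the ray $[v,\infty_\Tbb[\,\cong[0,+\infty)$ and may be bounded above, in which case its supremum $p$ satisfies $m.p=p$ and $m$ behaves like a ``parabolic'' fixing $p$ while pushing a descending ray below $p$ towards its lower end; different base points may then yield different invariant chains hanging below (possibly different) fixed points. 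Without a canonically, equivariantly defined axis, the identity $c.A_m=A_{cmc^{-1}}=A_m$ --- the engine of your whole argument --- is unavailable, and the subsequent steps (two diverging axes, the branching point $b=\min(A_{m_1}\cap A_{m_2})$, the $c$-fixed orbit $G.b$) collapse with it. Remark \ref{r.isometries} only covers the simplicial/metric half, which is the easy half.

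The paper's Lemma \ref{l-centralizer-focal} avoids axes entirely and you should compare your route with it. Given a non-trivial $c$ centralizing the focal subgroup $N$, one first uses $\Tbb\neq\R$ together with d-minimality (or simpliciality) to produce a point $v$ such that $v$ and $c.v$ are \emph{not} $\treeorder$-related. Setting $w=v\treeup c.v$ and using focality of $N$ to find $g\in N$ with $w\treeorder g.v$, one observes on the one hand that $g.v$ and $g.(c.v)$ are not related, which together with $c.v\treeorder w\treeorder g.v$ forces $c.v$ and $gc.v$ to be unrelated; on the other hand $v\treeorder g.v$ gives $c.v\treeorder c(g.v)=g(c.v)$ --- a contradiction in three lines. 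To salvage your approach you would first have to prove an order-theoretic ``axis/repelling end'' statement for non-elliptic automorphisms under d-minimality; that is a non-trivial lemma in its own right and is not supplied by anything in the paper.
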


The proof of the corollary is also based on the next general result for focal subgroups.

\begin{lem}\label{l-centralizer-focal}
	Let $G\leq \Aut(\Tbb, \treeorder)$ be a focal subgroup of automorphisms of a directed tree $\Tbb\neq\R$. Assume that the action of $G$ is either d-minimal or simplicial. Let $H<G$ be a focal subgroup of $G$. Then the centralizer of $H$ in $G$ is trivial.
\end{lem}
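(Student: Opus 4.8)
\emph{Proof idea.} The plan is to analyze individual automorphisms through their dynamics relative to the focus $\infty_\Tbb$, and to extract rigidity from the uniqueness of invariant axes. Recall that $a\in\Aut(\Tbb,\treeorder)$ is elliptic exactly when it never pushes a point strictly downward. I call $a$ \emph{hyperbolic} if it moves some point to a distinct comparable point (equivalently, $a$ or $a^{-1}$ is non-elliptic). The first preliminary step is to equip every hyperbolic $a$ with a well-behaved axis: if $a.v\treeorder v$, then $\{a^n.v:n\in\Z\}$ is a strictly monotone bi-infinite chain, and its convex hull $A_a=\bigcup_{n}[a^{n+1}.v,a^{n}.v]$ is a line (order-isomorphic to $\R$) whose upper end is the focus and on which $a$ acts without fixed points. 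The crucial point I would establish, purely from the order structure, is that $A_a$ is the \emph{unique} $a$-invariant line ending at the focus: any two such lines coincide near $\infty_\Tbb$, so their intersection is a ray $[c,\infty_\Tbb[$, and since $a$ preserves both it would fix the minimum $c\in A_a$, contradicting that $a$ is fixed-point-free on $A_a$. Since every automorphism fixes the focus, this has the immediate consequence that \emph{if $z$ commutes with a hyperbolic $a$, then $z.A_a$ is again an $a$-invariant line to the focus, whence $z.A_a=A_a$.}

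Two facts about the focal subgroup $H$ feed the argument. First, through \emph{every} point there passes the axis of some hyperbolic element of $H$: by focality there is $h\in H$ with $u\treeorder h.u$, so $h^{-1}$ pushes $h.u$ strictly down, making $h$ hyperbolic with $u\in A_h$. Second, $H$ preserves no proper line to the focus: if $H$ preserved such a line $A$, then using $\Tbb\neq\R$ (so $A\subsetneq\Tbb$) I could pick $u\notin A$ and $p\in A$, and focality would give $h\in H$ with $p\treeorder h.u$, forcing $h.u\in[p,\infty_\Tbb[\subseteq A$ and hence $u=h^{-1}.(h.u)\in A$, a contradiction.

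With these tools, let $z\in C_G(H)$. I would first show that $z$ is not hyperbolic. Indeed, if $z$ had an axis $A_z$, then each $h\in H$ commutes with $z$, so by the uniqueness consequence above $h.A_z=A_z$; thus $H$ would preserve the proper line $A_z$, contradicting the second fact. Hence $z$ moves every point either to itself or to an \emph{incomparable} point. To conclude, suppose $z\neq\id$, so that $z.u\neq u$, and therefore $z.u$ is incomparable to $u$, for some $u$. Choosing by the first fact a hyperbolic $h\in H$ with $u\in A_h$, the uniqueness consequence yields $z.A_h=A_h$, so $z.u\in A_h$; but $A_h$ is totally ordered and contains $u$, so $z.u$ and $u$ are comparable, a contradiction. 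Therefore $z=\id$, i.e.\ $C_G(H)$ is trivial.

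The main obstacle, requiring the most care, is the metric-free treatment of axes. Unlike the simplicial or $\R$-tree case, where hyperbolic isometries carry a genuine translation length, here I must build the invariant line $A_a$ and prove both its fixed-point-freeness and its uniqueness using only the directed-tree axioms \ref{i-tree}--\ref{i-chains-R}, and I must check that the upward ray $[p,\infty_\Tbb[$ based at a point $p\in A_a$ actually lies inside $A_a$. Once the uniqueness of the axis to the focus is secured, the commutation argument above is short and runs uniformly in both the d-minimal and the simplicial cases.
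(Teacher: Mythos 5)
Your overall strategy---build an axis for each non-elliptic element, prove its uniqueness as an invariant line to the focus, and then play the axes of $H$ against a commuting element---is genuinely different from the paper's argument, but it rests on a claim that is false in the present setting, and you have correctly located it yourself without resolving it: namely, that the convex hull $A_a$ of the orbit $\{a^n.v\}$ (for $a.v\treeorder v$) is a line \emph{whose upper end is the focus}. For an order-automorphism of a directed tree this can fail: the increasing sequence $(a^{-n}.v)_{n\ge 0}$ lives in the ray $[v,\infty_\Tbb[\,\cong[0,+\infty)$, and if it is bounded above its supremum $s$ exists and satisfies $a.s=s$, so the ``axis'' terminates at a fixed point strictly below $\infty_\Tbb$. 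This is not a pathology excluded by the hypotheses in the d-minimal case: in the Plante-like actions of $H\wr_\Omega B$ with $B$ a non-abelian affine group (Proposition \ref{p-plante-like-general}), an element lying over a contraction of $\R$ toward an interior fixed point $p$ moves a point $v$ at level $x<p$ strictly up, yet its forward orbit stays below the unique point of $[v,\infty_\Tbb[$ at level $p$, which it fixes. Once axes may stop short of the focus, every subsequent step degrades: the uniqueness argument (which needs the intersection of two invariant lines to be a ray $[c,\infty_\Tbb[$), the claim that through every point passes the axis of some $h\in H$ reaching the focus (focality gives you \emph{some} $h$ with $u\treeorder h.u$, but not one with unbounded orbit --- a focal group can conceivably consist entirely of elements with bounded increasing orbits), and the application of your ``no invariant proper line'' fact to $A_z$ in the hyperbolicity-exclusion step (which needs $[p,\infty_\Tbb[\,\subseteq A_z$). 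A telling symptom is that your proof never invokes the hypothesis that the $G$-action is d-minimal or simplicial; in the simplicial case the axis theory can be repaired by standard Bass--Serre arguments (branch points on a ray form a discrete set with finite initial segments, so an infinite increasing orbit of branch points is unbounded), but in the d-minimal case the escape-to-the-focus property is precisely what would require a new argument.

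For comparison, the paper's proof avoids axes entirely. It uses d-minimality (via the existence of branching points in every open arc, since $\Tbb\neq\R$) or simpliciality only to produce a single point $v$ such that $v$ and $k.v$ are \emph{incomparable} for the putative nontrivial centralizing element $k$; it then sets $w=v\treeup k.v$, uses focality of $H$ once to find $g\in H$ with $w\treeorder g.v$, and observes that $k.v\treeorder w\treeorder g.v$ forces $g.v$ and $gk.v=kg.v$ to lie on the common ray $[k.v,\infty_\Tbb[$, hence to be comparable, contradicting that $g$ preserves incomparability. Your endgame (contradicting the existence of a point sent to an incomparable point) is similar in spirit, but the route you take to it is where the proof is incomplete.
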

\begin{proof} By contradiction, we assume that there exists a non-trivial automorphism $k\in G$ centralizing $H$. We first claim that there exists a point $v\in\Tbb$ with $v$ and $k.v$ that are not $\treeorder$-related. To see this, assume first that the action of $G$ is d-minimal. This assumption, together with the fact that $\Tbb\neq \R$, implies that every non-empty open path $]v, w[\subset \Tbb$ contains branching points.  Choose $w\in \Tbb$ such that $k.w\neq w$. If $w$ and $h.w$ are not $\treeorder$-related, we choose $v=w$. Else assume, say, that $w\treeorder k.w$, and let $v_0\in ]w, k.w[$ be a branching point. Let $v\treeorder v_0$ lie in a different path-component of $\Tbb \setminus\{v_0\}$ as $w$. Then $v$ and $k.v$ are not $\treeorder$-related. If the action is simplicial, choose $w\in \operatorname{Br}(\Tbb)$ such that $k.w \neq w$, and let $v_1$ and $v_2$ be two distinct branching points below $w$. Then at least one $v\in \{v_1,v_2\}$ is such that $k.v$ and $v$ are not $\treeorder$-related.

Let now $v$ be as in the claim. Write $w:=v\treeup k.v$. Since $H$ is focal, we can take $g\in H$ so that $g.v\treeorderop w$. On the one hand, since $v$ and $k.v$ are not $\treeorder$-related, neither are $g.v$ and $gk.v$. Thus, since $k.v\treeorder w\treeorder g.v$, we conclude that $k.v$ and $gk.v$ are non $\treeorder$-related. On the other hand, since $v\treeorder w\treeorder g.v$ we have that $k.v\treeorder kg.v=gk.v$ which contradicts the previous conclusion. \qedhere
\end{proof}

\begin{proof}[Proof of Corollary \ref{c-normal-focal-centralizer}]
	Combine Lemma \ref{l-centralizer-focal} with Proposition \ref{p-focal-normal}.
\end{proof}

\subsection{Modding out horocyclic normal subgroups}
\label{s-modding-out}
The aim of this section is to explain that if $\Phi\colon G\to \Aut(\Tbb, \treeorder)$ is an action and $N$ is a normal horocyclic subgroup, then $N$ can be mod out to obtain a new action of $G/N$ on a directed tree (Proposition \ref{l-quotienting-horocyclic} below). The quite intuitive idea is to consider the action on the quotient $\Tbb/N$. The issue however is that the space $\Tbb/N$ is not a directed tree in general, but only a pre-directed tree as defined in Definition \ref{d-pre-directed-tree}   (see Example \ref{e-predirected} below). 
Fortunately, this is only a minor technicality that can be readily solved by passing to a slightly smaller quotient.  This is the goal of the discussion below.

	Let $(\Tbb_1,\treeorder)$ and $(\Tbb_2,\treeorder)$ be pre-directed trees. We say that a map $\pi\colon \Tbb_1\to\Tbb_2$ is a \emph{grading} if it is surjective and for every $v, w\in \Tbb_1$ such that $v\treeorder w$, we have that $\pi(v)\treeorder \pi(w)$ and the restriction of $\pi$ to $[v, w]$ is an order-preserving isomorphism onto $[\pi(v), \pi(w)]$. If in addition, for $i\in \{1,2\}$, one has that $\Phi_i\colon G\to \Aut(\Tbb_i, \treeorder)$ are order-preserving actions admitting a $G$-equivariant grading from  $\Tbb_1$ onto $\Tbb_2$, we say that $\Phi_1$ can be \emph{graded} by $\Phi_2$.
\end{dfn}

\begin{lem}\label{lem.predirectedtree} Let $\varphi\colon G\to \Aut(\Tbb,\treeorder)$ be an action on a pre-directed tree. Then, there exists an action $\widehat\varphi\colon G\to\Aut(\widehat\Tbb,\treeorder)$ on a directed tree, grading  $\Phi $. Moreover, if $\Phi $ is d-minimal so is $\widehat\Phi $. 
\end{lem}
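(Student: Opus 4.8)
The plan is to realize $\widehat\Tbb$ as a quotient of $\Tbb$ that ``merges'' the doubled points responsible for the failure of the Hausdorff-type axiom \ref{i-directed}. Concretely, for $u\in\Tbb$ write $]u,\infty_\Tbb[\,=\{x\in\Tbb: u\treeorder x\}$ for the open ray above $u$, and declare $u\sim v$ whenever $]u,\infty_\Tbb[\,=\,]v,\infty_\Tbb[$. This is an equivalence relation whose classes are antichains: if $u\sim v$ and $u\treeordereq v$ then $v\in\,]u,\infty_\Tbb[\,=\,]v,\infty_\Tbb[$, which is absurd. Since $\sim$ is defined in purely order-theoretic terms and every $g\in G$ acts by an order automorphism fixing the focus (hence preserving rays), the relation is $G$-invariant. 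I set $\widehat\Tbb=\Tbb/\!\sim$, equip it with the relation $[u]\treeordereq[v]\iff\,]v,\infty_\Tbb[\,\subseteq\,]u,\infty_\Tbb[$, and let $\pi\colon\Tbb\to\widehat\Tbb$ be the projection. A short check shows that this relation is a well-defined partial order (antisymmetry is exactly the definition of $\sim$), that $u\treeordereq v$ implies $\pi(u)\treeordereq\pi(v)$, and that $G$ descends to an action $\widehat\Phi\colon G\to\Aut(\widehat\Tbb,\treeorder)$ for which $\pi$ is equivariant.

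First I would record the two facts that make everything routine: $\pi$ is injective in restriction to any ray $[u,\infty_\Tbb[$ and to any arc $[v,w]$ with $v\treeorder w$, simply because such sets are totally ordered while distinct $\sim$-equivalent points are incomparable. From this the axioms \ref{i-tree} and \ref{i-chains-R} for $\widehat\Tbb$ follow: the upper set of $[u]$ equals $\pi([u,\infty_\Tbb[)$, on which $\pi$ is an order isomorphism, giving a ray isomorphic to $[0,+\infty)$; and the image $\pi(\Sigma)$ of a countable separating set $\Sigma\subset\Tbb$ separates in $\widehat\Tbb$ after lifting comparable pairs through $\pi$. I would also observe that $\pi$ is a grading: it is surjective, order-preserving by construction, and an order isomorphism from each arc $[v,w]$ onto $[\pi(v),\pi(w)]$ by the injectivity just noted.

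The crux is axiom \ref{i-directed}, and this is where the quotient does its work. Given $[u],[v]$, the set $C$ of common upper bounds $[u,\infty_\Tbb[\,\cap\,[v,\infty_\Tbb[$ is non-empty by \ref{c-predirected}, totally ordered (it sits inside a ray), and upward closed, hence identified with an interval $[a,+\infty)$ or $(a,+\infty)$ inside the ray $[u,\infty_\Tbb[\,\cong[0,+\infty)$. If $C$ has a least element $m$, I claim $[m]=[u]\treeup[v]$; the only way a common upper bound $[w]$ could sit strictly below $[m]$ would force a representative $w\treeorder m$ whose open arc $]w,m[$ lies in $C$, which is impossible since $]w,m[$ is a non-empty open sub-ray lying off the $u$-ray. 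If instead $C=(a,+\infty)$ has no least element, then the points $p\in[u,\infty_\Tbb[$ and $q\in[v,\infty_\Tbb[$ at level $a$ satisfy $]p,\infty_\Tbb[\,=C=\,]q,\infty_\Tbb[$, so $p\sim q$ and $[p]$ is a genuine least common upper bound of $[u]$ and $[v]$. In both cases the bottom of $C$ collapses to a single point of $\widehat\Tbb$, namely $[u]\treeup[v]$, and leastness is checked directly through the subset characterization of $\treeordereq$. This establishes that $\widehat\Tbb$ is a directed tree graded by $\pi$.

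Finally, to transfer d-minimality I would take a configuration $\bar v_1\treeorder\bar v_2$, $\bar w_-\treeorder\bar w$ in $\widehat\Tbb$ and lift it to $\Tbb$: choose any representatives $v_1$ of $\bar v_1$ and $w_-$ of $\bar w_-$, then pick $v_2\in[v_1,\infty_\Tbb[$ and $w\in[w_-,\infty_\Tbb[$ projecting to $\bar v_2$ and $\bar w$ (possible since $\pi$ maps each ray isomorphically onto the corresponding ray of $\widehat\Tbb$). Applying d-minimality of $\Phi$ yields $g\in G$ and $w'\in[w_-,w[$ with $g.[w',w]\subset\,]v_1,v_2[$; projecting by the equivariant, order-preserving map $\pi$, which carries $]v_1,v_2[$ bijectively onto $]\bar v_1,\bar v_2[$, gives $g.[\pi(w'),\bar w]\subset\,]\bar v_1,\bar v_2[$ with $\pi(w')\in[\bar w_-,\bar w[$, i.e.\ d-minimality of $\widehat\Phi$. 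The main obstacle throughout is the verification of \ref{i-directed}: one must argue that the failure of least common upper bounds in $\Tbb$ is localized \emph{exactly} at the $\sim$-classes and is resolved precisely by collapsing them; the remaining axioms and the d-minimality transfer then reduce to the two injectivity properties of $\pi$ on rays and arcs.
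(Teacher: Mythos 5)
Your construction is exactly the paper's: identifying points with the same strict upper ray and ordering the classes by reverse inclusion of those rays is literally the paper's $\widehat\Tbb=\{\widehat v : v\in\Tbb\}$ with $\widehat v\treeorder\widehat w\iff\widehat v\supsetneq\widehat w$, and your case analysis for \ref{i-directed} (the set of common upper bounds is a closed or an open sub-ray of $[u,\infty_\Tbb[$) is the paper's dichotomy $\widehat{v_1}\cap\widehat{v_2}=\widehat w$ versus $\widehat w\cup\{w\}$, so the proposal is correct and takes essentially the same route. The one imprecise point is the justification in the closed-ray case: the contradiction is that $]w,m[\,\subseteq C$ would put points of $C$ strictly below $\min C$ (the arc $]w,m[$ lies \emph{on} the $u$-ray, not off it), but this is a wording slip rather than a gap.
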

\begin{proof} Given $v\in\Tbb$, write $\widehat v:=\{w\in\Tbb:v\treeorder w\}$ and $\widehat\Tbb:=\{\widehat v:v\in\Tbb\}$. We set $\widehat v\treeorder\widehat w$ if and only if $\widehat v\supsetneq \widehat w$. Equivalently, $\widehat w\treeorder \widehat v$ if and only if $w\in \widehat v$. It is direct to check that if $\Sigma\subset\Tbb$ is a countable subset given by condition \ref{i-chains-R} for $(\Tbb,\treeorder)$, then the subset $\widehat\Sigma:=\left \{\widehat v\in \widehat{\Tbb}:v\in\Sigma\right \}$ also satisfies condition \ref{i-chains-R}. In order to check condition \ref{i-tree}, for any $v\in \Tbb$ write  
	\begin{equation}
		\label{eq.Lv}L_{v}=\left \{\widehat w\in\widehat\Tbb:\widehat v\treeorder\widehat w\right \}=\left \{\widehat w\in \widehat\Tbb:w\in \widehat v\right \}.
	\end{equation}
From this, it follows that the poset $(\widehat\Tbb,\treeorder)$ satisfies condition \ref{i-tree} in the definition of directed tree and that the map $\pi\colon \Tbb\to\widehat\Tbb$ defined by $\pi(v)=\widehat v$ is a grading. Notice that the action $\Phi $ projects to an action $\widehat\Phi\colon G\to\Aut(\widehat\Tbb,\treeorder)$ that grades $\Phi $ through the projection $\pi$. It also follows from \eqref{eq.Lv} that whenever $\Phi $ is d-minimal, so is $\widehat\Phi $. It remains to show that $(\widehat\Tbb,\treeorder)$ satisfies condition \ref{i-directed} in the definition of directed tree. For this, note that by condition \ref{c-predirected}, given $v_1,v_2\in\Tbb$, there exists $w\in\Tbb$ such that either $\widehat{v_1}\cap \widehat{v_2}=\widehat w$ or $\widehat{v_1}\cap \widehat{v_2}=\widehat w\cup \{w\}$. In both cases, the set of common upper bounds of $\widehat{v_1}$ and $\widehat{v_2}$ equals $L_{{v_1}}\cap L_{{v_2}}=L_{w}\cup\{\widehat w\}$ and therefore $\widehat w$ is the least common upper bound of $\widehat{v_1}$ and $\widehat{v_2}$, as wanted.
\end{proof}

\begin{prop} \label{l-quotienting-horocyclic}
	Let $\Phi\colon G\to \Aut(\Tbb, \treeorder)$ be an action on a directed tree. Assume that $N\unlhd G$ is a normal subgroup whose action is horocyclic. Then, $\Phi$ can be graded by an action $\widehat{\Phi}\colon G\to \Aut(\widehat{\Tbb}, \treeorder)$ on a directed tree where  $N$ acts trivially. Moreover, if $\Phi $ is d-minimal  so is $\widehat\Phi $.
\end{prop}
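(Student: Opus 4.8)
The plan is to realize the desired action as the one induced by $\Phi$ on the orbit space $\Tbb/N$, to show that this orbit space is naturally a \emph{pre}-directed tree on which $N$ acts trivially, and then to invoke Lemma \ref{lem.predirectedtree} to replace it by a genuine directed tree. Let $\pi\colon \Tbb\to \Tbb/N$ be the orbit map, write $\bar v:=\pi(v)=N.v$, and declare $\bar u\treeordereq \bar v$ whenever there are representatives $u'\in N.u$ and $v'\in N.v$ with $u'\treeordereq v'$. Since $N$ is normal, $G$ permutes the $N$-orbits and acts on $\Tbb/N$ by order-automorphisms, with $N$ acting trivially; thus this action factors through $G/N$, and the orbit map is $G$-equivariant and surjective.

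The single mechanism driving everything is the following reformulation of ellipticity: for every $k\in N$ and $x\in\Tbb$, if $x\treeordereq k.x$ then $k.x=x$ (apply the definition of elliptic element to $k^{-1}\in N$ at the point $k.x$). Two consequences are recorded first. (i) \emph{Every $N$-orbit is an antichain}: if $g.v\treeordereq h.v$, then taking $x=g.v$ and $k=hg^{-1}$ gives $g.v=h.v$. (ii) \emph{$\pi$ is injective on arcs}: an arc $[v,w]$ with $v\treeorder w$ is a chain by \ref{i-tree}, so two of its points lying in the same orbit are comparable, hence equal by (i).

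With these in hand I would verify that $(\Tbb/N,\treeorder)$ is a pre-directed tree, which is the heart of the argument. Reflexivity and transitivity of $\treeordereq$ are routine (for transitivity one transports one of the two witnessing comparabilities by an element of $N$ so that both share a common representative). \textbf{Antisymmetry is the main obstacle}: from $u_1\treeordereq v_1$ and $v_2\treeordereq u_2$ with $u_1,u_2\in N.u$ and $v_1,v_2\in N.v$, write $u_2=g.u_1$ and $v_1=h.v_2$ with $g,h\in N$; then $u_1\treeordereq h.v_2\treeordereq h.u_2=hg.u_1$, so the reformulated ellipticity (applied to $hg$) forces the whole chain to consist of equalities, whence $u_1=h.v_2\in N.v$ and $\bar u=\bar v$. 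For \ref{i-tree}, I would show that for any representative $v$ the restriction of $\pi$ to the ray $[v,\infty_\Tbb[$ is an order isomorphism onto $\{\bar u:\bar v\treeordereq \bar u\}$: surjectivity follows by pulling a witnessing comparability back by an element of $N$, injectivity is (ii), and order-reflection uses antisymmetry together with (ii); since the ray is order-isomorphic to $[0,+\infty)$, so is this up-set. Axiom \ref{c-predirected} holds by projecting $u\treeup v$, and the fact that different choices of representatives may produce incomparable common upper bounds is precisely why one obtains only a pre-directed tree. Finally \ref{i-chains-R} holds with $\pi(\Sigma)$, by lifting a strict comparability $\bar u\treeorder\bar v$ to representatives $u_0\treeorder v_0$ and inserting a point of $\Sigma\cap[u_0,v_0]$. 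The same ray description shows that $\pi$ is a grading: on each arc $[v,w]$ it is an order isomorphism onto $[\bar v,\bar w]$.

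It then remains to transport d-minimality and to pass from the pre-directed tree to a directed one. If $\Phi$ is d-minimal, I would lift given data $\bar v_1\treeorder \bar v_2$ and $\bar w_-\treeorder \bar w$ to representatives $v_1\treeorder v_2$, $w_-\treeorder w$, apply d-minimality of $\Phi$ to obtain $g\in G$ and $w'\in[w_-,w[$ with $g.[w',w]\subset\,]v_1,v_2[$, and project: since $\pi$ is an equivariant grading it maps the open arc $]v_1,v_2[$ onto $]\bar v_1,\bar v_2[$ and carries $w'$ to some $\pi(w')\in[\bar w_-,\bar w[$ (distinct from $\bar w$ by (ii)), yielding d-minimality of the induced action. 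Applying Lemma \ref{lem.predirectedtree} to $(\Tbb/N,\treeorder)$ with its $G/N$-action produces a directed tree $\widehat\Tbb$ together with a $G$-equivariant grading $\rho\colon \Tbb/N\to\widehat\Tbb$, preserving d-minimality; since $\widehat\Phi$ is built from $\Tbb/N$, the subgroup $N$ still acts trivially on $\widehat\Tbb$. Composing gradings, the map $\rho\circ\pi\colon \Tbb\to\widehat\Tbb$ exhibits $\Phi$ as graded by $\widehat\Phi$, completing the proof. Throughout, the only genuinely delicate point is ensuring that the induced relation on orbits is a bona fide order (antisymmetry) and that arcs inject into the quotient; both hinge on the ellipticity reformulation above, after which the tree axioms and d-minimality follow by transport along $\pi$.
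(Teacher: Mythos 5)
Your proposal is correct and follows essentially the same route as the paper: form the quotient $\Tbb/\Phi(N)$ with the induced relation, verify it is a pre-directed tree (with antisymmetry and the injectivity of $\pi$ on rays both resting on ellipticity of the elements of $N$), check that d-minimality passes to the quotient, and then invoke Lemma \ref{lem.predirectedtree} to upgrade the pre-directed tree to a directed tree. Your preliminary reformulation of ellipticity and its two consequences (orbits are antichains, arcs inject) is a clean way to package the same computations the paper performs inline.
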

\begin{proof} We will first show that $\Phi $ is graded by an action on a pre-directed tree where $N$ acts trivially. For this, consider $\widehat{\Tbb}_0=\Tbb/\Phi(N)$, namely the quotient space of $\Tbb$ under the action of $N$, and denote by $\pi\colon \Tbb\to \widehat{\Tbb}_0$ the quotient projection. We define a partial order on $\widehat\Tbb_0$  by declaring $x_1\treeordereq x_2$ if there exist $v_i\in \pi^{-1}(x_i)$ for $i\in\{1,2\}$, with $v_1\treeordereq v_2$. In order to check that the relation is transitive, take $x_1,x_2,x_3\in\widehat\Tbb_0$ satisfying $x_1\treeordereq x_2\treeordereq x_3$. By definition of the partial order, there are points $v_i\in \pi^{-1}(x_i)$ for $i\in\{1,2\}$ and $w_i\in\pi^{-1}(x_i)$ for $i\in\{2, 3\}$ such that $v_1\treeordereq v_2$ and $w_2 \treeordereq w_3$. On the other hand, since the points $v_2$ and $w_2$ are in the same $N$-orbit, there exists $h\in N$ such that $h.w_2=v_2$. Thus, we have $v_1\treeordereq h.w_2\treeordereq h.w_3$ and, since $h.w_3$ projects to $x_3$, we get $x_1 \treeordereq x_3$, as desired. To check antisymmetry of $\treeorder$, suppose that for $x_1,x_2\in\widehat\Tbb_0$ it holds that $x_1\treeordereq x_2$ and $x_2\treeordereq x_1$. In this case, again by the definition of the partial order, there exist $v_i, w_i\in\pi^{-1}(x_i)$ for $i\in\{1,2\}$, such that $v_1\treeordereq v_2$ and $w_2\treeordereq w_1$. Take $h_1,h_2\in N$ so that $w_i=h_i.v_i$ for $i\in\{1,2\}$. Then, applying $h_2^{-1}$, we  get $v_1\treeordereq v_2\treeordereq h_2^{-1}h_1.v_1$. Since $h_2^{-1}h_1$ is elliptic, these must be equalities, implying that $v_1=v_2$ and therefore $x_1=x_2$. This finishes the proof that $(\widehat\Tbb_0,\treeorder)$ is a poset.
	
We proceed to check that $(\widehat\Tbb_0,\treeorder)$ satisfies condition \ref{i-tree} in the definition of pre-directed tree. Recall that for given $x\in \widehat{\Tbb}_0$,  we write $[x, \widehat{\infty_\Tbb}[\,=\{y\in \widehat{\Tbb}_0: x\treeordereq y\}$.
We want to show that $[x, \widehat\infty_\Tbb[$ is order-isomorphic to the half-line $[0, +\infty[$. To see this, take $v\in\pi^{-1}(x)$ and denote by $\pi_0\colon [v,\infty_\Tbb[\to\widehat\Tbb$ the restriction of the projection $\pi$ to the line $[v,\infty_\Tbb[$. In order to prove the  claim we will show that $\pi_0$ is an order-preserving bijection onto $[x,\widehat\infty_\Tbb[$. Note that, by construction, $\pi_0$ is order-preserving and takes values in $[x, \widehat{\infty_\Tbb}[$. 
From the assumption that the action of $N$ is horocyclic, we get that $\pi_0$ is injective. Let us prove that $\pi_0$ is surjective. For this, take $x'\in[x,\widehat\infty_\Tbb[$. Then, there exist $w'\in\pi^{-1}(x')$, $w\in\pi^{-1}(x)$ with $w\treeordereq w'$, and $h\in N$ with $h.w=v$. Since $v=h.w\treeordereq h.w'$, we deduce that  $h.w'\in[v,\infty_\Tbb[$, and $\pi_0(h.w')=\pi_0(w')=x'$, as desired. This proves condition \ref{i-tree}. Condition \ref{i-chains-R} in the definition of a pre-directed tree follows taking a countable subset $\Sigma_0\subset \Tbb$ and considering the subset $\widehat\Sigma_0:=\pi(\Sigma_0)$. Finally, for condition \ref{c-predirected}, take $x_1,x_2\in \widehat T_0$, and for $i\in \{1,2\}$ take $v_i\in \pi^{-1}(x_i)$. As $\Tbb$ is a directed tree, the least common upper bound $v_1\treeup v_2\in \Tbb$ exists, and set $y=\pi(v_1\treeup v_2)\in \Tbb_0$. By the definition of the partial order on $\Tbb_0$, we see that $y$ is a common upper bound for $x_1$ and $x_2$.
Thus, we have proved that $(\widehat\Tbb_0,\treeorder)$ is a pre-directed tree. Notice that the action $\Phi $ projects to an action on $\widehat\Tbb_0$ which clearly preserves $\treeorder$. Denote by $\widehat{\Phi}_0\colon G\to\Aut(\widehat{\Tbb}_0,\treeorder)$ this action and notice that $N$ is contained in its kernel. It follows from the proof that $(\widehat\Tbb_0,\treeorder)$ is a pre-directed tree, that the projection map $\pi\colon \Tbb\to\widehat\Tbb_0$ is a $G$-equivariant and surjective grading. Moreover, it follows directly from the definitions that in the case where $\Phi $ is d-minimal, so is $\widehat\Phi _0$.
	
Finally, by applying Lemma \ref{lem.predirectedtree} we get a directed tree $(\widehat\Tbb,\treeorder)$ together with an action $\widehat\Phi\colon G\to\Aut(\widehat\Tbb,\treeorder)$ which grades $\widehat\Phi_0$. Moreover, Lemma \ref{lem.predirectedtree} also implies that if $\widehat\Phi_0$ is d-minimal, so is $\widehat\Phi$. Notice that, since $N$ is contained in the kernel of $\widehat\Phi_0$ and gradings are surjective, $N$ is also contained in the kernel of $\widehat\Phi $. Finally, since the composition of gradings is a grading, the result follows.
\end{proof}

\begin{ex} \label{e-predirected}
Let us sketch an example of an action $\Phi\colon G\to\mathsf{Aut}(\mathbb{T,\treeorder})$ such that $G$ contains a normal subgroup $N$ whose action is horocyclic, and the induced quotient $\mathbb{T}/N$ is not a directed tree (but a pre-directed tree). This construction will be a variation of the construction of Plante-like actions in \S \ref{s-wreath}. For this, consider a finitely generated group $H$ and a minimal action $\varphi_0\colon H\to\homeo_0(\R)$, and let $\Omega\subset\R$ be the $\varphi_0$-orbit of $0$. We will define $G$ as a finitely generated subgroup of the \emph{unrestricted} wreath product $\Z \wrwr_{\Omega} H$. Recall that the latter is defined as the semi-direct product $\Z \wrwr_\Omega H:=\Z^\Omega\rtimes H$, where $\Z^\Omega$ is the set of all functions from $\Omega$ to $\Z$ (with no restriction on the support), viewed as an abelian group with pointwise addition, and $H$  acts on $\Z^\Omega$ by the shift action $h\cdot f(x)=f(h^{-1}.x)$.  To define $G$, consider  a decreasing sequence $(x_n)_{n\in\N}$ contained in $\Omega$ and converging to $0$, and denote its image by $S$. Set 
\[G=\langle H,\delta_0,\delta_S\rangle\le \Z \wrwr_\Omega H,\]
 where $\delta_0$ and $\delta_S$ are the indicator functions of the sets $\{0\}$ and $S$ respectively. Notice that  ${L}:=\Z^\Omega\cap G$ is a countable subgroup of $\Z^\Omega$ (generated by the $H$-translates of $\delta_S$ and $\delta_0$), and $G=L\rtimes H$. Notice also that the $H$-translates of $\delta_0$ generate the subgroup of finitely supported functions $L_0:=\bigoplus_\Omega \Z \le L$, which is normal in $G$.  Finally observe that by construction,  the support of the function $\delta_S$ is bounded above in $\Omega$ (with respect to the order induced by $\R$), and thus the same is true for every $f\in {L}$. Given distinct functions $f, h$ in $L$,  denote by $m_{f, h}\in \R$ the supremum of $\mathsf{Supp}(f-h)$, i.e.\ the smallest point of $\R$ such that $f|_{(m_{f, h}, +\infty)\cap \Omega}=h|_{(m_{f, h}, +\infty)\cap \Omega}$.
 
Set $\mathbb{T}=(L\times \R)/_\sim$, where $\sim$ is the equivalence relation defined by
\[(f_1,x_1)\sim(f_2,x_2)\Leftrightarrow\Big(x_1=x_2\text{ and }f_1(z)=f_2(z)\text{ for every }z>x_1\Big).\]
Denote by $[f, x]$ the equivalence class of $(f, x)$, and endow $\Tbb$ with the partial order $\treeorder$ defined by $[f_1,x_1]\treeorder[f_2,x_2]$ if and only if $x_1<x_2$ and $f_1|_{(x_2, +\infty)\cap \Omega}=f_2|_{(x_2, +\infty)\cap \Omega}$. Equivalently, the poset $(\Tbb, \treeorder)$ can be thought as follows: for each $f\in L$, take an isomorphic copy of $(\R, <)$ (corresponding to the subset $\{f\}\times \R$ of $L\times \R$), and glue the copies corresponding to $f\neq h$ along the ray $[m_{f, h}, +\infty)$. With this picture in mind, it is easy to verify that $(\Tbb, \treeorder)$ is a directed tree.  For condition \ref{i-directed}, notice that the smallest upper bound of $[f,x]$ and $[h,y]$ is $[f,\max\{x,y,m_{f, h}\}]$ (recall also that $L$ is countable to verify \ref{i-chains-R}).

Consider the diagonal action of $G$ on $L\times \R$, where the action on the ${L}$-coordinate is the natural action associated with the semi-direct product decomposition $G=L\rtimes H$ (i.e.\ $L$ acts on itself by translations, and $H$ acts on $L$ by the shift), while  the action on the $\R$-coordinate is obtained by projecting $G$ onto the factor $H$ and then acting via $\varphi_0$. It is direct to check that this diagonal action preserves the equivalence relation $\sim$, and descends to an order-preserving action $\Phi\colon G\to\Aut(\Tbb,\treeorder)$. One can also show that $\Phi$ is d-minimal (this essentially follows from minimality of $\varphi_0$ combined with the transitivity of the action of $L$ on itself). The subgroup ${L}$ is horocyclic for $\Phi$, and $\Tbb/L$ is a directed tree isomorphic to $(\R, <)$, on which $G$ acts via $\varphi_0$ (in particular, $\Phi$ is horograded by $\varphi_0$). In contrast, consider the subgroup $L_0$ of finitely supported functions, which is also horocyclic for $\Phi$ (as it is contained in $L$). We claim that $\Tbb/L_0$ is not a directed tree (though a pre-directed tree, see the proof of Proposition \ref{l-quotienting-horocyclic}). For this,  denote by $f_0$ the constant function $f_0\equiv 0$. We claim that the projections of the points $u:=[f_0,0]$ and $v:=[\delta_S,0]$ to $\Tbb/L_0$ have no least common upper bound. For this notice that, by construction of the set $S$, the function $\delta_S|_{(x, +\infty)\cap \Omega}$ has finite support for every $x>0$. It follows that for every $x>0$ there exists $h\in L_0$ such that $h.[f_0,x]=[\delta_S,x]$, and therefore  every point strictly above $u$ (or $v$) projects to a common upper bound for $u$ and $v$ in $\Tbb/L_0$. On the other hand, the points $u$ and $v$ have distinct projections, since the set $\mathsf{Supp}(\delta_S)\cap(0,+\infty)$ is infinite. This finishes the proof of our claim. 
\end{ex}

\section{Finding horogradings for virtually solvable groups}\label{s.Rfocal-solvable}
\label{sec Focal solvable}

\subsection{Actions of solvable groups on directed trees}

The results in the previous sections imply the following for actions of solvable groups on directed trees. Recall that given a virtually solvable group, we denote by $\Fit(G)$ its Fitting subgroup and by $\vf(G)$ its virtual Fitting length (see \S \ref{s-fitting}). 

\begin{thm}\label{t-solvable-inductive}
Any d-minimal action $\Phi\colon G\to \Aut(\Tbb, \treeorder)$ of a virtually solvable group on a directed tree $\Tbb\neq \R$, can be horograded by a minimal action $j\colon G\to \homeo_0(\R)$ such that $\Fit(G)\subset \ker j$.
\end{thm}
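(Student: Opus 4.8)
The plan is to argue by induction on the virtual Fitting length $\vf(G)$, the inductive step consisting of ``modding out'' the Fitting subgroup by means of the machinery of Section~\ref{sec horocyclic}. Everything hinges on showing that $\Fit(G)$ acts \emph{elliptically}, so that (once we discard a trivial sub-case) Proposition~\ref{l-quotienting-horocyclic} applies and yields a d-minimal action of the smaller group $G/\Fit(G)$ on a directed tree grading $\Tbb$.

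First I would verify that every element of $\Fit(G)$ acts as an elliptic automorphism of $\Tbb$. Since $\Phi$ is d-minimal it is focal (Remark~\ref{r.dminimal_is_focal}); a focal group cannot be finite, as it pushes points arbitrarily high along the unbounded rays of $\Tbb$, so $G$ is infinite, and being virtually solvable this forces $\Fit(G)$ to be non-trivial. Now fix $f\in \Fit(G)$; it lies in some nilpotent normal subgroup $N\unlhd G$, because $\Fit(G)$ is the union of such subgroups. Its image $\Phi(N)\unlhd \Phi(G)$ is again nilpotent and normal: if $\Phi(N)$ is trivial then $f$ acts trivially, and if $\Phi(N)$ is non-trivial its centre is non-trivial, hence so is its centralizer in $\Phi(G)$, and Corollary~\ref{c-normal-focal-centralizer} (applicable because $\Phi(G)$ is focal, $\Tbb\neq\R$, and the action is d-minimal) shows that $\Phi(N)$ is horocyclic. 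In either case $\Phi(f)$ is elliptic.

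Consequently $\Phi(\Fit(G))$ consists of elliptic elements, and either it is trivial --- in which case $\Phi$ already descends to $\bar G:=G/\Fit(G)$ --- or it is a non-trivial normal subgroup of the d-minimal group $\Phi(G)$, hence acts without global fixed points and is therefore horocyclic, so that Proposition~\ref{l-quotienting-horocyclic} lets us grade $\Phi$ by a d-minimal action $\widehat\Phi\colon G\to \Aut(\widehat\Tbb,\treeorder)$ on a directed tree in which $\Fit(G)$ acts trivially. Write $\pi\colon\Tbb\to\widehat\Tbb$ for the grading (the identity when $\Phi(\Fit(G))$ is trivial). If $\widehat\Tbb\cong\R$, then $\widehat\Phi$ is a d-minimal action on $\R$, which is automatically \emph{minimal} (d-minimality sends a neighbourhood of any point into any interval, so every orbit is dense), $\widehat\Phi$ is trivial on $\Fit(G)$, and $\pi$ is itself a horograding of $\Phi$ by $\widehat\Phi$; thus $j=\widehat\Phi$ works. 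If instead $\widehat\Tbb\neq\R$, then $\widehat\Phi$ descends to a d-minimal action $\bar\Phi$ of $\bar G$, which is virtually solvable with $\vf(\bar G)=\vf(G)-1$; the inductive hypothesis furnishes a minimal $\bar\jmath\colon\bar G\to\homeo_0(\R)$ horograding $\bar\Phi$ through some $\bar\pi\colon\widehat\Tbb\to\R$, and then the pull-back $j\colon G\to\homeo_0(\R)$ of $\bar\jmath$ is minimal with $\Fit(G)\subset\ker j$, while the composite $\bar\pi\circ\pi\colon\Tbb\to\R$ --- a grading followed by a horograding, hence an order-isomorphism on each arc --- is a $G$-equivariant surjective horograding of $\Phi$ by $j$. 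The base case $\vf(G)=1$ is vacuous, since then $\bar G$ is finite and no finite group admits a d-minimal (hence focal) action on a directed tree.

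The step I expect to be the main obstacle is the ellipticity of $\Fit(G)$. The efficient route is to exploit that \emph{every} nilpotent normal subgroup has non-trivial centre, hence non-trivial centralizer, so that Corollary~\ref{c-normal-focal-centralizer} applies to all of them at once; the remaining points --- that d-minimality, virtual solvability, and the condition ``$\widehat\Tbb\neq\R$'' descend to the quotient, and that compositions of gradings and horogradings restrict to order-isomorphisms on arcs --- are routine once the results of Sections~\ref{sec horocyclic} and~\ref{sec generalities} are in hand.
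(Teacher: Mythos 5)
Your proof is correct and follows essentially the same route as the paper's: apply Corollary~\ref{c-normal-focal-centralizer} to the nilpotent normal subgroups to see that $\Fit(G)$ acts trivially or horocyclically, mod it out via Proposition~\ref{l-quotienting-horocyclic}, and induct on the virtual Fitting length, composing the grading with the horograding obtained from the quotient. The only differences are cosmetic (you place the base case at $\vf(G)=1$ and make explicit a few points the paper leaves implicit, such as the non-trivial centre giving a non-trivial centralizer).
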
 

\begin{proof} Since $\Fit(G)$ is the union of the nilpotent normal subgroups, and the action of a nilpotent normal subgroup is either trivial or horocyclic by Corollary \ref{c-normal-focal-centralizer}, we deduce that the action of $\Fit(G)$ must be either trivial or horocyclic. Thus, applying Proposition \ref{l-quotienting-horocyclic} to $N=\Fit(G)$, we obtain that $\Phi$ is graded by a d-minimal action $\Phi_0\colon G\to \Aut(\Tbb_0, \treeorder)$ where $\Fit(G)$ acts trivially. Since $\Phi_0$ is d-minimal, $G/\Fit(G)$ must be infinite and therefore $\vf(G/\Fit(G))=\vf(G)-1\geq 1$. In particular $\vf(G)\geq 2$. The proof proceeds by induction on $\vf(G)$. Assume that $\vf(G)=2$, and consider the d-minimal action $\Phi_0\colon G\to\Aut(\Tbb_0,\treeorder)$ with $\Fit(G)\subset\ker \Phi _0$ grading $\Phi $. Since $\vf(G/\Fit(G))=1$, it must hold that $\Tbb_0=\R$ and therefore the conclusion follows by setting $j=\Phi_0$. For the inductive step, consider a d-minimal action $\Phi\colon G\to\Aut(\Tbb,\treeorder)$ and  its corresponding grading action $\Phi _0\colon G\to\Aut(\Tbb_0,\treeorder)$. We distinguish two cases according to whether $\Tbb_0=\R$ or not. In the former case the conclusion follows by setting $j=\Phi _0$. In the latter case, we can apply the induction hypothesis to $\Phi _0$, since it factors through $G/\Fit(G)$ which has strictly smaller virtual Fitting length. Therefore $\Phi _0$ is horograded by an action $j\colon G\to \homeo_0(\R)$ such that $\Fit(G)\subset \ker j$ and the same holds for $\Phi$.
\end{proof}

\subsection{Actions of virtually solvable groups on the line} 

We are now ready to prove Theorem \ref{t-main} from the introduction. Recall that it states that any minimal action $\varphi\colon G\to \homeo_0(\R)$ of a finitely generated virtually solvable group is either conjugate to an affine action, or laminar and horograded by a minimal or cyclic action $j\colon G\to \homeo_0(\R)$ factoring through $G/\Fit(G)$.
\begin{proof}[Proof of Theorem \ref{t-main}]
Let $\varphi$ be a minimal action of $G$. It was already shown in \cite[Theorem 8.3.8]{BMRT}  that $\varphi$  is either laminar or
conjugate to an affine action  ($G$ is supposed there to be solvable but the proof works with minimal changes when $G$ is virtually solvable). Let us reproduce a sketch for completeness, and refer to  \cite[Theorem 8.3.8]{BMRT} for more details.  Upon  replacing $G$ by a quotient (which remains virtually solvable), we can suppose that $\varphi$ is faithful. Let $A$ be an abelian normal subgroup of $G$. Then $\fix^{\varphi}(A)=\varnothing$, by minimality. We distinguish two cases according to whether $A$ acts freely or not. If $A$ acts freely, then by H\"older's theorem  its action is semi-conjugate to an action by translations associated with an embedding $\iota \colon A\to (\R, +)$ (see for instance Navas \cite[\S 2.2.4]{Navas-book}).  If $\iota(A)$ is dense in $(\R, +)$,  then $\varphi(A)$ has a unique minimal invariant subset $\Lambda \subseteq \R$.  As $A$ is normal, $\Lambda$ is preserved by $\varphi$, and thus by minimality $\Lambda=\R$ and $\varphi(A)$ is actually conjugate to $\iota(A)$; after conjugating, we can assume $\varphi(A)=\iota(A)$. Then the Lebesgue measure is the unique $A$-invariant Radon measure up to a positive constant. It follows that every element of  $\varphi(G)$ sends the Lebesgue measure to a multiple of itself, and thus is an affine map. In the case where $\iota(A)$ is cyclic, we have that $\varphi(A)$ is conjugate to a cyclic group of translations centralized by $\varphi(G)$, and $\varphi$ descends to a minimal action on the topological circle $\R/\varphi(A)$. Since $G$ is amenable, this action preserves a probability measure, and thus $\varphi$ preserves a non-zero Radon measure on $\R$. In this case it follows that $\varphi$ is conjugate to an action by translations, thus also affine. 
Assume now that $A$ acts non-freely;  then there exists $a\in A\setminus\{\id\}$ such that $\fix^{\varphi}(a)\neq \varnothing$. But $\fix^{\varphi}(a)$ is an $A$-invariant closed subset (as all elements of $A$ commute with $a$), and thus it must accumulate on both $\pm \infty$. Let $I$ be a (bounded) connected component of $\suppphi(a)$. Then for every $g\in G$, $g.I$ is a connected component of $\suppphi(gag^{-1})$, and since $gag^{-1}\in A$ commutes with $a$, we have that $g.I$ and $I$ cannot cross. Thus the closure of the orbit of $I$ in $\R^{(2)}$ is a $\varphi$-invariant lamination $\mathcal{L}$, and since $\varphi$ is minimal, $\mathcal{L}$ is automatically a covering lamination, and $\varphi$ is focal with respect to $\mathcal{L}$ (Remark \ref{r-minimal-implies-focal}). This shows that $\varphi$ is either laminar or conjugate to an affine action.

 Now, in the laminar case, by Proposition \ref{prop.minimalamination} we can choose a minimal $\varphi$-invariant lamination $\mathcal{L}$. We then apply  Proposition \ref{prop.minimalmodeltree} to  find an action $\Phi \colon G\to \Aut(\Tbb, \treeorder)$ on a directed tree, which is either d-minimal or simplicial and transitive on branching points, and an equivariant map $\iota\colon (\mathcal{L}, \subseteq)\to (\Tbb, \treeorder)$. If the action $\Phi $  is d-minimal, then by Theorem \ref{t-solvable-inductive}, it can be horograded by an action $j\colon G\to \homeo_0(\R)$ which satisfies the desired conclusion, and thus $\varphi$ is also horograded by $j$ (Remark \ref{r-tree-horograding}). If it is simplicial, we have from Proposition \ref{prop.Rtreemetric} that $\Phi$ can be horograded by a cyclic action $j\colon G\to \homeo_0(\R)$. Then every element $g\in G$ with $g\notin \ker j$ is a hyperbolic isometry. As argued at the beginning of the proof of Theorem \ref{t-solvable-inductive}, Corollary \ref{c-normal-focal-centralizer} gives that $\Phi(\Fit(G))$ is horocyclic, so that $\Fit(G)\subset \ker j$.
\end{proof}

A crucial aspect of Theorem \ref{t-main} is that if $\varphi$ is laminar, then we may apply  the theorem again to the  action  $j$ which horogrades $\varphi$. This opens the way to reason inductively to study actions of solvable groups on the line.
As a first simple illustration of this philosophy we prove Theorem \ref{mthm.affine_trace}, which is its most direct consequence. Indeed the combination of Theorem \ref{t-main} and Proposition \ref{p-horograding-germ} gives the following.

\begin{prop}\label{p-germs-solvable}
Let $\varphi \colon G \to \homeo_0(\R)$ be a minimal laminar action of a  finitely generated virtually solvable group. Then, there exist an irreducible affine action $\psi\colon G\to \Aff(\R)$ and two maps $h_+, h_-\colon \R\to \R$
satisfying the following conditions:
\begin{enumerate}
	\item $h_+$ is monotone non-decreasing,
	\item $h_-$ is monotone non-increasing,
	\item we have $\lim_{x\to +\infty}h_+(x)=\lim_{x\to -\infty} h_-(x)=+\infty$,
	\item for every $g
	\in G$ there exists $M>0$ such that $ h_+(\varphi(g)(x))=\psi(g) ( h_+(x))$ for every $x>M$, and $ h_-(\varphi(g)(x))=\psi(g)( h_-(x))$ for every $x<-M$. 
\end{enumerate}
\end{prop}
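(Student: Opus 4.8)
The plan is to prove a slightly stronger one-sided statement by induction on the virtual Fitting length $\vf(G)$, and then to deduce the Proposition in a single step. Concretely, I would first establish the following \emph{Claim}: for every minimal action $\rho\colon G\to\homeo_0(\R)$ of a finitely generated virtually solvable group there exist an irreducible affine action $\psi\colon G\to\Aff(\R)$ and a non-decreasing map $h\colon\R\to\R$ with $\lim_{x\to+\infty}h(x)=+\infty$ such that for every $g\in G$ one has $h(\rho(g)(x))=\psi(g)(h(x))$ for all sufficiently large $x$. Isolating only the germ near $+\infty$ is the key structural observation behind the whole argument: by Remark \ref{r-horograding-proper}, the two germ maps produced by Proposition \ref{p-horograding-germ} both tend to $+\infty$ (one as $x\to+\infty$, the other as $x\to-\infty$), so whenever germ maps are composed one always lands in a neighborhood of $+\infty$ of the target, and only the $+\infty$-germ of the next action is ever needed.

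To prove the Claim I would apply Theorem \ref{t-main} to $\rho$. If $\rho$ is conjugate to an affine action $\psi$, the orientation-preserving conjugating homeomorphism $h$ already satisfies all requirements, giving the base case. Otherwise $\rho$ is minimal laminar and is (positively) horograded by a minimal or cyclic action $j$ factoring through $\bar G=G/\Fit(G)$; Proposition \ref{p-horograding-germ} then yields in particular a non-decreasing map $h_+^\rho$ with $h_+^\rho(x)\to+\infty$ and $h_+^\rho(\rho(g)(x))=j(g)(h_+^\rho(x))$ for large $x$. If $j$ is cyclic it is a non-trivial translation action, hence itself an irreducible affine action, and we set $\psi=j$ and $h=h_+^\rho$. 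If $j$ is minimal, then it factors through $\bar G$ with $\vf(\bar G)=\vf(G)-1$, so the induction hypothesis applied to $(\bar G,j)$ produces an irreducible affine action $\psi$ of $\bar G$ (pulled back to $G$) and a non-decreasing germ $h^j$ with $h^j(j(g)(y))=\psi(g)(h^j(y))$ for large $y$; I then set $h=h^j\circ h_+^\rho$. Checking that $h$ is non-decreasing, proper, and equivariant for large $x$ is a routine threshold chase: given $g$, choose $x$ large enough that both the equivariance of $h_+^\rho$ holds and $h_+^\rho(x)$ exceeds the threshold for $h^j$, which is possible since $h_+^\rho$ is proper. The induction closes because $\vf$ strictly decreases, and at $\vf(G)=1$ (i.e.\ $G$ virtually nilpotent) Theorem \ref{t-main} can only return the affine base case, a minimal or irreducible cyclic action being unable to factor through the finite group $G/\Fit(G)$.

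Finally, to deduce the Proposition for the given minimal laminar $\varphi$, I would apply Proposition \ref{p-horograding-germ} once to obtain \emph{both} germ maps: $h_+^\varphi$ (non-decreasing, $h_+^\varphi(x)\to+\infty$ as $x\to+\infty$) and $h_-^\varphi$ (non-increasing, $h_-^\varphi(x)\to+\infty$ as $x\to-\infty$), each partially equivariant from $\varphi$ to the horograding action $j$. The Claim (directly in the cyclic case, through the induction in the minimal case) supplies the common $+\infty$-germ $h^j$ of $j$ together with the affine action $\psi$, and I set $h_+=h^j\circ h_+^\varphi$ and $h_-=h^j\circ h_-^\varphi$. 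Since $h^j$ is non-decreasing, $h_+$ inherits the monotonicity of $h_+^\varphi$ and $h_-$ that of $h_-^\varphi$; both tend to $+\infty$ at the appropriate end, and the partial equivariance with $\psi$ follows from the same threshold chase.

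The step I expect to be the main obstacle is not any individual computation but arranging the induction so that it genuinely closes: one must verify that a cyclic horograding counts as an irreducible affine action, that the affine action obtained over the quotient $\bar G$ pulls back to an irreducible affine action of $G$, and, above all, that only the $+\infty$-germ ever needs to be propagated through the recursion. This last point is precisely what reconciles the asymmetric monotonicity demanded in the statement ($h_+$ non-decreasing versus $h_-$ non-increasing) with a single affine target $\psi$ — an asymmetry that would break down if one naively attempted to carry both germs simultaneously through each inductive step, since an affine action need not admit an order-reversing equivariant germ near $-\infty$.
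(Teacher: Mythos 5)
Your proof is correct and follows essentially the same route as the paper's: induction on the virtual Fitting length, with Theorem \ref{t-main} supplying the horograding action and Proposition \ref{p-horograding-germ} the germ maps, which are then composed with a threshold chase. Your reorganization into a one-sided claim near $+\infty$ makes explicit a point the paper leaves implicit in ``the case of $h_-$ being analogous'', namely that the recursion for $h_-$ must also compose with the $+\infty$-germ of the horograding action (since $h_-$ tends to $+\infty$ at $-\infty$), which is precisely what guarantees that a single affine action $\psi$ serves both sides.
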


\begin{proof}
	We only discuss the existence of $h_+$, the case of $h_-$ being analogous.
We argue by induction on the virtual Fitting length $\vf(G)$ of $G$. Let $\varphi\colon G\to \homeo_0(\R)$ be a minimal laminar action. Apply Theorem \ref{t-main} to obtain an action $j\colon G\to \homeo_0(\R)$ as in the statement that horogrades $\varphi$. We apply Proposition \ref{p-horograding-germ} to obtain a map $h'_+$ as in its statement. If $j$ is an affine action (which includes the possibility that it is cyclic), then we have already reached the conclusion.  Else, $j$ is laminar, and by the inductive hypothesis for $G/\Fit(G)$, there exist an irreducible affine action $\psi$ and a non-decreasing map $h''_+\colon \R\to \R$ satisfying $\lim_{x\to +\infty} h''_+(x)=+\infty$ and $h''_+( j(g)(x))=\psi(g) ( h''_+(x))$ for any sufficiently large $x\in \R$. We then set $h_+=h''_+\circ h'_+$.
\end{proof}

We are now ready to give the proof of Theorem \ref{mthm.affine_trace}.  Recall that it states if $G$ is a finitely generated virtually solvable group, and $\varphi\colon G\to \homeo_0(\R)$ is an  action without global fixed points, then there exists an irreducible {\em affine} action $\psi\colon G\to \Aff(\R)$,  an interval $I$ of the form $(a,+\infty)$, and a non-decreasing map $h\colon I\to \R$, with $\lim_{x\to +\infty} h(x)=+\infty$  such that for every $g\in G$ we have	\[\psi(g)(h(x))= h(\varphi(g)(x))\]	for  all $x\in \R$ sufficiently large. Moreover, this $\psi$ is unique up to affine conjugacy.

\begin{proof}[Proof of Theorem \ref{mthm.affine_trace}] Let $\varphi\colon G\to \homeo_0(\R)$  be an action of a finitely generated virtually solvable group $G$ on the line.	If the action $\varphi\colon G\to \homeo_0(\R)$ is semi-conjugate to an affine action, we simply have to take this action as the desired $\psi$. Otherwise, by Theorem \ref{t-main}, we have that $\varphi$ is semi-conjugate to a minimal laminar action, so that we can apply Proposition \ref{p-germs-solvable}, which gives the desired irreducible affine action $\psi$.
	
	We next prove that $\psi$ is unique up to affine conjugacy. 
	For this, assume there exist two such irreducible affine actions $\psi_1$ and $\psi_2$, with corresponding maps $h_1\colon I_1\to \R$ and $h_2\colon I_2\to \R$.  	Then, for  any $g\in G$ and sufficiently large $x\in \R$, we have that
	\[
	\psi_2(g)h_2h_1^{-1}(x)=h_2\varphi(g)h_1^{-1}(x)=h_2h_1^{-1}\psi_1(g)(x),
	\]
	so that the actions $\psi_1$ and $\psi_2$ are ``semi-conjugate near $+\infty$'' by the map $h:= h_2\circ h_1^{-1}$, defined on some interval of the form $(c,+\infty)$. 
	For $i\in \{1,2\}$ we write $A_i=\psi_i(G)$. Observe first that if $\psi_1(g)$ is trivial for some $g$, then $\psi_2(g)$ is an affine map fixing the image of $h$, and thus is trivial as well. Thus $\ker \psi_1=\ker \psi_2$. 
It follows that if  one of the groups $A_i$ is a cyclic group (necessarily generated by a translation), then the other is as well, and this case is readily solved as any two positive translations are conjugate by an affine map. We can thus assume that $A_i$ is not a cyclic group of translations (and thus acts minimally on $\R$). 

Given a finite subset $S$ of homeomorphisms of the line and a point $x\in \R$, denote by $\mathcal{R}(S, x)$ the equivalence relation on the interval $(x, +\infty)$  defined by $(y, z)\in \mathcal{R}(S, x)$ if there exist $s_1,\ldots, s_n\in S$ such that $s_n\cdots s_1(y)=z$ and $s_i\cdots s_1(y)>x$ for every $i\in \{1,\ldots, n\}$. 

\begin{claim}
For $i\in \{1, 2\}$, there exists $S_i\subset A_i$ such that for every sufficiently large $x\in \R$, the relation $\mathcal{R}(S_i, x)$ is minimal (i.e.\ its equivalence classes are dense in $(x, +\infty)$). 
\end{claim}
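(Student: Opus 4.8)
The plan is to prove the claim for a general finitely generated subgroup $A\le \Aff(\R)$ that acts minimally on $\R$ and is not a cyclic group of translations; this is exactly the situation of each $A_i=\psi_i(G)$, since $A_i$ is finitely generated (being the image of the finitely generated group $G$) and we have already reduced to the minimal, non-cyclic-translation case. Writing $\ell\colon \Aff(\R)\to \R_{>0}$ for the homomorphism recording the linear part, I would split into two cases according to whether $\ell(A)$ is trivial or not, and in each case exhibit an explicit finite symmetric set $S_i\subset A$ for which the admissible orbits of $\mathcal{R}(S_i,x)$ can be described by hand. Throughout I write $t_s\colon x\mapsto x+s$ for the translation by $s$.

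First, suppose $\ell(A)=\{1\}$, so $A$ is a finitely generated subgroup of $(\R,+)$, hence free abelian of finite rank; since it is not cyclic its rank is at least $2$, so I may choose $\alpha,\beta>0$ with $\alpha/\beta\notin\Q$ such that $t_\alpha,t_\beta\in A$, and set $S_i=\{t_\alpha,t_\alpha^{-1},t_\beta,t_\beta^{-1}\}$. Writing $\Lambda=\Z\alpha+\Z\beta$ (which is dense in $\R$), I would check that the $\mathcal{R}(S_i,x)$-class of a point $y>x$ is exactly $\{y+\eta:\eta\in\Lambda,\ y+\eta>x\}$: to reach $y+\eta$ with $\eta=m\alpha+n\beta$ one first climbs using only $t_\alpha,t_\beta$ to $y+K\alpha+L\beta$ with $K\ge\max(m,0)$ and $L\ge\max(n,0)$ (all intermediate points being $\ge y>x$), and then descends using only $t_\alpha^{-1},t_\beta^{-1}$ monotonically down to $y+\eta$ (all intermediate points being $\ge y+\eta>x$). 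Density of $\Lambda$ then gives minimality, for \emph{every} $x$.

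Next, suppose $\ell(A)\neq\{1\}$. Then $A$ contains an element with nontrivial linear part, and after replacing it by its inverse I get an expanding homothety $g\colon x\mapsto \lambda(x-p)+p$ with $\lambda>1$ and fixed point $p$; moreover $A$ contains a nontrivial translation $t_\tau$ (if all homotheties in $A$ shared the fixed point $p$ and $A$ had no translations, then $p$ would be a global fixed point, contradicting irreducibility). I set $S_i=\{g,g^{-1},t_\tau,t_\tau^{-1}\}$ and claim minimality for every $x>p$. The key mechanism is that the word $g^{-n}t_\tau^{\pm1}g^{n}$ realizes the translation $t_{\pm\lambda^{-n}\tau}$, and that for $x>p$ its admissible orbit stays above $x$: using $g^{-k}\big(g^{n}(y)\pm\tau\big)=g^{n-k}(y)\pm\lambda^{-k}\tau$, the climb $g^{j}(y)\ge y>x$ is harmless, the point $g^{n}(y)\pm\tau$ after the translation is enormous, and on the contracting return the values $g^{n-k}(y)\pm\lambda^{-k}\tau$ for $k=1,\dots,n$ are all $>x$, the binding one being $k=n$, namely $y\pm\lambda^{-n}\tau$, which exceeds $x$ as soon as $\lambda^{-n}\tau<y-x$. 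Consequently, from any point $>x$ one can step left or right by $\lambda^{-n}\tau$ for all large $n$, i.e.\ by arbitrarily small amounts, never dropping below $x$; chaining such steps brings one within any $\varepsilon$ of any prescribed target in $(x,+\infty)$, so every class is dense.

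I expect the main obstacle to be precisely the leftward motion near $x$ in the homothety case: a bare application of $t_\tau^{-1}$ would leave $(x,+\infty)$, and it is essential to use the homothety-conjugated form $g^{-n}t_\tau^{-1}g^{n}$, whose return phase pairs the large expansions $g^{n-k}$ with the tiny remnants $\lambda^{-k}\tau$ so that the whole excursion hugs the region above $x$. Pinning down this pairing is the crux of the argument, and it is also what forces the hypothesis that $x$ be sufficiently large (here, $x>p$).
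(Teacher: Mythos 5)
Your proof is correct and follows essentially the same route as the paper's: in the pure-translation case it uses two rationally independent translations, and in the homothety case it conjugates a translation by powers of a homothety ($g^{-n}t_\tau^{\pm1}g^{n}$ versus the paper's $a^m b^n a^{-m}$) to produce arbitrarily small admissible steps whose partial orbits stay above $x$. The only cosmetic differences are that you use an expanding rather than contracting homothety and a symmetric generating set (so leftward motion is handled directly rather than through the symmetry of the generated equivalence relation), and you spell out the verification that the binding partial product is the final one, which the paper leaves as ``readily checked''.
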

\begin{proof}[Proof of claim]
Assume first that $A_i$ is contained in the group of translations. Since it is finitely generated and not cyclic, it contains two rationally independent  translations $a, b$, and one readily checks that $\mathcal{R}(\{a^{\pm1 }, b^{\pm 1}\}, x)$ is minimal for every $x\in \R$.  Otherwise $A_i$ contains a contracting homothety $a$ and a positive translation $b$. Up to conjugating by an affine map, assume that $a(y)=\lambda y$ with $\lambda<1$ and $b(y)=y+1$. For $x>0$ and $n, m>0$ we have $a^m b^na^{-m}(y)=y+n\lambda^{m}$ and one readily checks that the successive applications of $a^{\pm 1}, b$  in the word $a^m b^na^{-m}$ never move $y$ to the left of itself. Since $\lambda^m$ can be made arbitrarily small, we deduce that  $\mathcal{R}(\{a^{\pm 1}, b\}, x)$ is minimal for every $x>0$. 
\end{proof}

Let now $\tilde{S}_i\subset G$ be finite subsets such that $\psi_i(\tilde{S}_i)=S_i$. Choose $x$ large enough so that $\mathcal{R}(S_1, x)$ and $\mathcal{R}(S_2, h(x))$ are both minimal, and such that the equivariance condition $h\circ \psi_1(s)(y)=\psi_2(s)\circ h(y)$ holds for all $y\ge x$ and all $s\in \tilde{S}_1\cup \tilde{S_2}$. Then the image of $h|_{(x, +\infty)}$ is invariant under the relation $\mathcal{R}(S_2, h(x))$. Thus, arguments similar to those  in Lemma \ref{l-dynamical-realisation} show that this image must be dense, implying that $h$ is continuous on $(x, +\infty)$. Again as in the proof of Lemma \ref{l-dynamical-realisation}, minimality of $\mathcal{R}(S_1, x)$ shows that $h|_{(x, +\infty)}$ must be injective and thus a homeomorphism onto its image.

Now note that since $K:=\ker \psi_1=\ker \psi_2$, we have that $A_1$ is abelian (and hence a group of translations) if and only if $A_2$ is. Otherwise, both groups are non-abelian and their derived subgroups consist of translations. In the first case, set $H=G$ and in the  second case set $H=[G, G]$. In either case $\psi_i(H)$ is a minimal group of translations for $i\in\{1, 2\}$.
Let $g\in H$ be such that $\psi_1(g)$ is a non-trivial positive translation. Then so  is $\psi_2(g)$, by the equivariance near $+\infty$ of the map $h$. Up to conjugating $\psi_2$ by an affine map we can assume $\psi_1(g)=\psi_2(g)$. Let $J_1\subset (x,+\infty)$ be a fundamental interval for the translation $\psi_1(g)$ and set $J_2=h(J_1)$, which is also a fundamental interval for the translation $\psi_1(g)=\psi_2(g)$. Therefore, up to conjugating the action $\psi_2$ by a translation, we can assume that $J_1=J_2$. Then for any $i\in \{1,2\}$, $\psi_i|_H$ descends to a minimal action by rotations on the circle $J_i/\psi_i(g)$. Moreover, the homeomorphism $h\colon (c,+\infty)\to (h(c),+\infty)$ induces a homeomorphism $\overline h$ of $J_1/\psi_1(g)=J_2/\psi_2(g)$ which conjugates the two actions. This implies that $\overline h$ is a rotation, and thus $h$ is a translation. As $h(J_1)=J_1$, we deduce that $h$ is trivial. It follows that for every $g\in G$, the affine maps $\psi_1(g)$ and $\psi_2(g)$ coincide on a neighborhood of $+\infty$, and thus are equal.
\end{proof}

For later use, let us record the following consequence of Proposition \ref{p-germs-solvable}.
\begin{lem} \label{l-pseudohomtothety-exist}
Let  $\varphi\colon G\to \homeo_0(\R)$ be a minimal laminar action of a finitely generated virtually solvable group. Then there exists an element $h\in G$ whose image is a pseudo-homothety.  
\end{lem}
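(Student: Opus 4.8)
The plan is to deduce the statement from Proposition \ref{p-germs-solvable} together with the dichotomy for elements of laminar actions. Proposition \ref{p-germs-solvable} provides an irreducible affine action $\psi\colon G\to\Aff(\R)$ and a non-decreasing map $h_+\colon\R\to\R$ with $\lim_{x\to+\infty}h_+(x)=+\infty$ such that, for each $g\in G$, one has $h_+(\varphi(g)(x))=\psi(g)(h_+(x))$ for all sufficiently large $x$. Recall from the discussion after Definition \ref{d-expanding-homothety} (namely \cite[Proposition 8.1.13]{BMRT}) that every element of a laminar action is either a pseudo-homothety or totally bounded, the latter meaning that its fixed-point set accumulates on both $+\infty$ and $-\infty$. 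Hence it suffices to exhibit a single $g\in G$ for which $\Fix(\varphi(g))$ fails to accumulate on $+\infty$: such an element cannot be totally bounded, and is therefore a pseudo-homothety.

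To produce such a $g$, I would first use that $\psi$ is irreducible, hence nontrivial, to find $g\in G$ with $\psi(g)(y)>y$ for all sufficiently large $y$. Indeed, pick any $g_0$ with $\psi(g_0)\neq\id$; writing $\psi(g_0)\colon y\mapsto ay+b$, the element $\psi(g_0)$ (if $a>1$, or $a=1$ and $b>0$) or its inverse (if $a<1$, or $a=1$ and $b<0$) eventually pushes points to the right, since $\psi(g_0)(y)-y=(a-1)y+b$ tends to $+\infty$ in the homothety case and equals a nonzero constant in the translation case. Fix such a $g$ and let $y_0$ be large enough that $\psi(g)(y)>y$ for $y>y_0$.

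The remaining point is the claim that $\varphi(g)(x)>x$ for all sufficiently large $x$. I would argue by contradiction: if there were a sequence $x_n\to+\infty$ with $\varphi(g)(x_n)\le x_n$, then monotonicity of $h_+$ would give $h_+(\varphi(g)(x_n))\le h_+(x_n)$. On the other hand, for $n$ large the germ equivariance applies and $h_+(x_n)>y_0$ (as $h_+(x_n)\to+\infty$), whence $h_+(\varphi(g)(x_n))=\psi(g)(h_+(x_n))>h_+(x_n)$, a contradiction. Thus $\Fix(\varphi(g))$ does not accumulate on $+\infty$, and the dichotomy above forces $\varphi(g)$ to be a pseudo-homothety, as desired.

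I expect the only genuinely delicate step to be the correct packaging of the germ equivariance: since the identity $h_+\circ\varphi(g)=\psi(g)\circ h_+$ holds only near $+\infty$ and $h_+$ need not be injective, one must take care that the comparison is carried out for $x_n$ large enough that both the equivariance threshold $M$ (depending on $g$) is exceeded and $h_+(x_n)>y_0$; granting this, the argument reduces to the monotonicity computation above. The reduction to ``not totally bounded'' via \cite[Proposition 8.1.13]{BMRT} is what lets me avoid verifying the pseudo-homothety property directly from Definition \ref{d-expanding-homothety}.
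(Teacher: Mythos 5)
Your proof is correct and follows essentially the same route as the paper's: both invoke Proposition \ref{p-germs-solvable}, choose $g$ with $\psi(g)$ eventually moving points to the right, and transfer this to $\varphi(g)$ near $+\infty$ via the germ equivariance of $h_+$. The only (harmless) difference is that the paper also checks the behaviour near $-\infty$ using $h_-$ and concludes directly from the definition, whereas you check one end and finish by the pseudo-homothety/totally-bounded dichotomy of \cite[Proposition 8.1.13]{BMRT}.
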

\begin{proof}
Let $\psi\colon G\to \Aff(\R)$ be the affine action given by Proposition \ref{p-germs-solvable}. Choose $g\in G$ with $\psi(g)\neq \id$. Assume, say, that  $\psi(g)(x)>x$ for every sufficiently large $x$. Then $\varphi(g)(x)>x$ for all $x$ close enough to $+\infty$, and similarly $\varphi(g)(x)<x$ for all $x$ close enough to $-\infty$. Thus $\varphi(g)$ is a pseudo-homothety.
\end{proof}

\section{The case of $\Z\wr \Z$}\label{ssc.Plante}

The goal of this  section is to apply the previous discussion to the lamplighter group $\Z \wr \Z$. This gives an  illustration of how Theorem \ref{t-main} can be applied in the simplest special case, and at the same time the results of this section will be invoked in the proof of Theorem \ref{mthm.C1}. 

\subsection{Classification of actions} The following result (together with Proposition \ref{l-wr-affine}) classifies actions of $\Z \wr \Z$ on the line up to semi-conjugacy. Recall that we have constructed what we called Plante-like actions of $\Z\wr\Z$ in \S \ref{ssc.plante}.

\begin{prop}\label{p-ZwrZ}
Every irreducible action $\varphi\colon \Z\wr \Z \to \homeo_0(\R)$ is semi-conjugate either to an affine action or to a Plante-like action.
\end{prop}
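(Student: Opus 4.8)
The plan is to dispose of the trivial cases, reduce via Theorem~\ref{t-main} to a minimal laminar action horograded by a cyclic one, and then realize that action explicitly as a Plante-like action through Lemma~\ref{l-dynamical-realisation}.

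First I would handle the reductions. Since $\Z\wr\Z$ is finitely generated, any irreducible $\varphi$ is semi-conjugate to a minimal or a cyclic action (\S\ref{sc.preliminary-actions}); a cyclic action takes values in the integer translations, hence is affine and we are done. So assume $\varphi$ minimal and apply Theorem~\ref{t-main}: either $\varphi$ is conjugate to an affine action (done), or it is laminar and horograded by an action $j$ factoring through $G/\Fit(G)=(\Z\wr\Z)/L\cong\Z$. In either subcase $j(g)$ is fixed-point-free and $j(L)=\{\id\}$ (as $L=\Fit(G)\subseteq\ker j$), so by Proposition~\ref{p-classification-elements} the element $\varphi(g)$ is a homothety (because $\Fix(j(g))=\emptyset$) and every $\varphi(h_n)$ is totally bounded. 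Normalize the unique fixed point of $\varphi(g)$ to be the origin and assume $\varphi(g)$ expanding (the contracting case is symmetric and yields $\prec_{\min}^{\pm}$). Minimality forces $\Fix^\varphi(L)=\emptyset$ (it is $G$-invariant and closed), so $q:=\varphi(h_0)(0)\neq 0$; say $q>0$ (otherwise one lands on $\prec_{\max}^{-}$).

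Next I would build the equivariant map. Identifying the underlying set of $\Omega=\Z[X,X^{-1}]$ with $L$ as in \S\ref{ssc.plante} ($L$ acting by addition, $g$ by multiplication by $X$), I would set $\sigma\colon\Omega\to\R$, $\sigma(P)=\varphi(P)(0)$. Using $\varphi(g)(0)=0$, one checks $\sigma$ is $\Z\wr\Z$-equivariant: $\sigma(\ell+P)=\varphi(\ell)(\sigma(P))$ for $\ell\in L$, and $\sigma(XP)=\varphi(XP)(0)=\varphi(g)(\sigma(P))$. The goal becomes showing that $\sigma$ is non-decreasing for $\prec_{\max}^{+}$; granting this, Lemma~\ref{l-dynamical-realisation} applies (the dynamical realization of $(\Omega,\prec_{\max}^{+})$ is the corresponding Plante-like action, minimal by Proposition~\ref{prop Plante-like minimal}, while $\varphi$ is minimal and irreducible) and gives that $\varphi$ is positively conjugate to that Plante-like action. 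Since both the $\sigma$-induced preorder and $\prec_{\max}^{+}$ are invariant under translation by $L$, monotonicity reduces to the statement that the $\prec_{\max}^{+}$-positive cone maps into $[0,+\infty)$; peeling off the top-degree monomial and rescaling by $\varphi(g)$ reduces this to the scale-separation estimate $\varphi(L_{<0})(0)\subseteq(\varphi(h_0)^{-1}(0),\varphi(h_0)(0))$, where $L_{<0}=\langle h_{-1},h_{-2},\dots\rangle$.

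This estimate is the heart of the matter, and I expect it to be the main obstacle. Writing $I_n$ for the connected component of $\suppphi(h_n)$ containing $0$, the definition of an expanding homothety (Definition~\ref{d-expanding-homothety}) applied to $U=I_n$ gives $\overline{I_n}\subseteq\varphi(g)(I_n)=I_{n+1}$, so the $I_n$ form a strictly nested chain. Because $q\in I_0$ and $\varphi(g^{-1})(I_0)=I_{-1}\subseteq I_0$, one gets $\varphi(h_{-k})(0)=\varphi(g^{-k})(q)\in I_{-1}$ for every $k\geq 1$; since commuting homeomorphisms permute each other's support components, the disjoint-or-equal alternative forces every $\varphi(h_{-k})$ to preserve $I_{-1}$, whence $\varphi(L_{<0})(0)\subseteq I_{-1}$. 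It then remains to prove $\overline{I_{-1}}\subseteq[\varphi(h_0)^{-1}(0),\varphi(h_0)(0)]$, which after rescaling by $\varphi(g)$ is the key claim $\varphi(h_1)(0)\geq\beta_0$, with $\beta_0$ the right endpoint of $I_0$. This I would settle as follows: $\varphi(h_1)$ preserves $I_1\supsetneq\overline{I_0}$ and acts freely there, so if it also preserved $I_0$ then the orbit $\{\varphi(h_1)^n(0)\}$ would stay in $I_0$ while converging to the right endpoint $\beta_1>\beta_0$ of $I_1$, a contradiction; hence $\varphi(h_1)(I_0)$ is disjoint from $I_0$, and as $\varphi(h_1)(0)>0$ this yields $\varphi(h_1)(0)\geq\beta_0$. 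The symmetric argument bounds the left endpoint, which completes the estimate, the monotonicity of $\sigma$, and with it the proof.
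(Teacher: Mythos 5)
Your proposal is correct and follows essentially the same route as the paper: reduce via Theorem~\ref{t-main} to a minimal laminar action horograded by a cyclic action, use Proposition~\ref{p-classification-elements} to see that $\varphi(g)$ is a homothety and the $\varphi(h_n)$ are totally bounded, and then show that $P\mapsto\varphi(P)(0)$ is an order-preserving equivariant embedding of $(\Z[X,X^{-1}],\prec)$ into $\R$ so that Lemma~\ref{l-dynamical-realisation} applies. The only (cosmetic) difference is in how monotonicity is verified: the paper directly computes the sign of $f.p-p$ from the fact that $h_n$ pushes $I_{n-1}$ entirely off itself, while you package the same nested-interval argument into an explicit scale-separation estimate for $L_{<0}$; both rest on the identical key observation that $\varphi(h_n)(I_{n-1})$ must be a component of $\suppphi(h_{n-1})$ distinct from $I_{n-1}$.
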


\begin{proof}
Using notation from  \S\ref{sec ejemplo lamp lighter}, we set $G=\Z\wr \Z=L\rtimes\Z$ and consider the generators  $h_0\in L$ and $g$ (see \eqref{e-presentation-ZwrZ}). Recall also that we set $h_n=g^nh_0g^{-n} $ ($n\in \Z$).
By Theorem \ref{t-main}, it is enough to show that every minimal laminar action $\varphi\colon G\to \homeo_0(\R)$ is conjugate to one of the four Plante-like actions. So assume that $\varphi$ is minimal and laminar. By Theorem \ref{t-main}, $\varphi$ can be horograded by an action $j\colon G\to\homeo_0(\R)$ which is either cyclic or minimal, and where $\Fit(G)$ acts trivially. Since $\Fit(G)=L$ and $G/L\cong\Z$, we deduce that $j$ is a cyclic action and so $j(g)$ has no fixed points.  In particular, from Proposition \ref{p-classification-elements}, we get that the elements $\varphi(h_n)$ ($n\in \Z$) are totally bounded and that  $\varphi(g)$ is a homothety. Let us call $p\in \R$ the unique fixed point of $\varphi(g)$. Note that no element $\varphi( h_n)$ ($n\in \Z$) can fix $p$, as otherwise $p$ would be a global fixed point for the action.

Assume first that $\varphi(g)$ is an expanding homothety and that  $h_0.p>p$. It follows that  $h_n.p>p$ for every $n\in \Z$.   For fixed $n\in \Z$, let $I_n$ be the connected component of $\suppphi(h_n)$ containing $p$. Then, for every $n\in \Z$, the homothety $\varphi(g)$ sends $I_n$ to $I_{n+1}$ and thus $\overline{I_n}\subset I_{n+1}$; moreover for each $l<n$ the element $h_n$ must send $I_l$ entirely to the right of itself, as it must map it to a distinct connected component of $\suppphi(h_l)$. From this, it follows that for every element $f=h_{n_1}^{k_1}\cdots h_{n_r}^{k_r}$ with $n_1<\cdots<n_r$ and $k_r\neq 0$, we have $f.p>p$ if $k_r>0$, and $f.p<p$ otherwise. Now note that such an element $f$ corresponds to the polynomial $P(X)=k_1X^{n_1}+\cdots +k_r X^{n_r}$ under the isomorphism $L\cong \Z[X, X^{-1}]$, and the sign of $k_r$ also determines whether $P$ is positive or negative in the lexicographic order $\prec_{\max}^+$. Since $g.p=p$, it follows that the map that to any element $f\in L\cong \Z[X,X^{-1}]$ associates  $f.p$ is a $G$-equivariant order-preserving embedding of $(\Z[X,X^{-1}], \prec_{\max}^+)$ into $\R$, and thus, by Lemma \ref{l-dynamical-realisation}, $\varphi$ must be conjugate to the Plante-like action associated with $\prec_{\max}^+$ . 

When $\varphi(g)$ is an expanding homothety, but $h_0.p<p$ we similarly obtain that $\varphi$ is conjugate to the Plante-like action associated with $\prec_{\max}^-$. When $\varphi(g)$ is a contracting homothety, we get a conjugacy to the Plante-like action corresponding to $\prec_{\min}^+$ or $\prec_{\min}^-$ depending on which side the element $\varphi(h_0)$ moves $p$ to.
\end{proof}
 
\subsection{Plante-like subactions in laminar actions} 

 Plante-like actions of $\Z \wr \Z$ are in a precise sense the ``smallest'' laminar actions for solvable groups. This will be an important step in the proof of Theorem \ref{mthm.C1}.
 \begin{prop} \label{p-plante-ubiquitous}
 	Let  $\varphi\colon G\to \homeo_0(\R)$ be a faithful minimal laminar action of a finitely generated virtually solvable group. Then $G$ contains a subgroup isomorphic to $\Z \wr \Z$ whose action is irreducible and semi-conjugate to a Plante-like action.
 \end{prop}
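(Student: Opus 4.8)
The plan is to produce the copy of $\Z\wr\Z$ by combining a pseudo-homothety (playing the role of the generator $g$) with a well-chosen element of an abelian normal subgroup (playing the role of the lamp $h_0$), and then to identify the resulting action by invoking Proposition \ref{p-ZwrZ}. First I would set the stage exactly as in the proof of Theorem \ref{t-main}: since $\varphi$ is minimal and laminar, Proposition \ref{prop.minimalamination} provides a minimal invariant lamination, Proposition \ref{prop.minimalmodeltree} an associated action $\Phi\colon G\to\Aut(\Tbb,\treeorder)$ on a directed tree $\Tbb\neq\R$ which is d-minimal or simplicial and transitive on branching points, and Corollary \ref{c-normal-focal-centralizer} shows that $\Fit(G)$ acts horocyclically. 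Consequently the characteristic abelian subgroup $A:=Z(\Fit(G))$, which is normal in $G$ and nontrivial since $\Fit(G)$ is a nontrivial nilpotent group, has all its elements elliptic on $\Tbb$ and acts by totally bounded homeomorphisms on $\R$ (Proposition \ref{p-classification-elements}); by minimality of $\varphi$ it has no global fixed point, hence is horocyclic. Finally, Lemma \ref{l-pseudohomtothety-exist} gives $t\in G$ with $\varphi(t)$ an expanding pseudo-homothety; correspondingly $t$ acts hyperbolically on $\Tbb$, and in particular $t\notin\Fit(G)$.

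Next I would build the subgroup. Choosing $a\in A\setminus\{\id\}$ and setting $a_n:=t^n a t^{-n}\in A$, all the $a_n$ commute because $A$ is abelian, so $H:=\langle a,t\rangle=\langle a_n:n\in\Z\rangle\rtimes\langle t\rangle$ is a priori a quotient of $\Z\wr\Z$. To see that it is all of $\Z\wr\Z$ it suffices to prove that the $a_n$ are $\Z$-independent, and for this I would run a ping-pong argument on the tree: the elliptic element $a$ fixes a ray $[b,\infty_\Tbb[$ towards the focus while moving points immediately below $b$, so $a_n$ fixes $[t^n.b,\infty_\Tbb[$ and acts nontrivially just below $t^n.b$. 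Since $t$ is hyperbolic the points $t^n.b$ are strictly increasing along its axis, so in a putative relation $a_{n_1}^{k_1}\cdots a_{n_r}^{k_r}=\id$ with $n_1<\cdots<n_r$ and $k_r\neq 0$ one evaluates at a point just below $t^{n_r}.b$ which is fixed by every lower-index factor but moved by $a_{n_r}^{k_r}$, a contradiction. This yields $H\cong\Z\wr\Z$, with $t$ in the role of $g$ and $a$ in the role of $h_0$.

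It then remains to guarantee irreducibility and to identify the action, and here is where the choice of $a$ must be made with care. Writing $L=\langle a_n\rangle$, one computes $\fixphi(H)=\fixphi(t)\setminus\suppphi(a)$, so irreducibility amounts to arranging $\fixphi(t)\subseteq\suppphi(a)$. The clean situation is when $\varphi(t)$ is an honest homothety with a single fixed point $p$, which occurs as soon as $j(t)$ is fixed-point-free (Proposition \ref{p-classification-elements}); then it suffices to pick $a\in A$ with $p\in\suppphi(a)$ (possible since $A$ has no global fixed point), after which the connected component $J\ni p$ of $\suppphi(a)$ is spread by the expanding dynamics so that $\bigcup_n\varphi(t^n)(J)=\R$, giving $\bigcup_n\suppphi(a_n)=\R$ and hence irreducibility. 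With $\varphi|_H$ irreducible, Proposition \ref{p-ZwrZ} shows it is semi-conjugate either to an affine or to a Plante-like action, and the affine case is excluded as follows: each $\varphi(a_n)$ is totally bounded, so in any semi-conjugacy to an affine action $\psi'$ the maps $\psi'(a_n)$ are translations possessing a fixed point, hence trivial (Proposition \ref{l-wr-affine}); this forces $\psi'$ to factor through $H/L\cong\Z$ as a fixed-point-free translation, which cannot be semi-conjugate to $\varphi(t)$ because the latter has a fixed point. Therefore $\varphi|_H$ is semi-conjugate to a Plante-like action.

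I expect the main obstacle to be the simultaneous control demanded in the two middle steps, namely pinning the isomorphism type (the $\Z$-independence of the lamps, via the tree ping-pong) together with irreducibility (the supports of the $a_n$ must exhaust the line). The genuinely delicate point is the latter: when $\varphi(t)$ is only a pseudo-homothety with a larger fixed-point set $F=\fixphi(t)$, one must produce a single $a\in A$ whose support contains all of $F$, which is \emph{not} automatic from $\fixphi(A)=\varnothing$ (an abelian group can fail to have one element moving every point of a compact set). Overcoming this, either by first reducing to the honest-homothety case or by a covering argument that exploits the fact that $t$ normalizes $A$ and expands away from $F$, is the technical heart of the proof.
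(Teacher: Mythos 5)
Your overall strategy is the paper's: produce a pseudo-homothety $s$ via Lemma \ref{l-pseudohomtothety-exist}, pair it with an element $a$ of a non-trivial abelian normal subgroup $A$ whose support swallows $\fix^\varphi(s)$, observe that the conjugates $s^nas^{-n}$ commute so that $\langle a,s\rangle$ is a quotient of $\Z\wr\Z$ acting irreducibly, and identify the action via Proposition \ref{p-ZwrZ} after excluding the affine case by a fixed-point argument. But there is a genuine gap: the step you defer as ``the technical heart'' --- producing a single $a\in A$ with $\fix^\varphi(s)\subseteq\suppphi(a)$ --- is exactly the step you must prove, and you neither prove it nor correctly assess it. Your claim that it is ``not automatic from $\fixphi(A)=\varnothing$'' is in fact wrong for abelian groups acting on the line: since the elements of $A$ commute, the connected components of their supports pairwise do not cross, so for each $x\in\R$ the components containing $x$ are nested and their union $I_x$ is an open interval; one checks that $I_x$ and $I_y$ are equal whenever they meet, and since $\fixphi(A)=\varnothing$ these intervals cover $\R$, so by connectedness $I_x=\R$ for every $x$. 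Compactness of an interval $[\xi_1,\xi_2]\supseteq\fix^\varphi(s)$ then yields a single $a\in A$ one of whose support components contains $[\xi_1,\xi_2]$, whence $\fix^\varphi(a)\cap\fix^\varphi(s)=\varnothing$ and $\langle a,s\rangle$ acts irreducibly. This argument uses neither the minimality of $\varphi$, nor the tree, nor the fact that $s$ normalizes $A$; neither of the two repair strategies you sketch is needed.

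Two further remarks. First, the directed-tree setup (Propositions \ref{prop.minimalamination}, \ref{prop.minimalmodeltree}, Corollary \ref{c-normal-focal-centralizer}) and the ping-pong proving $\Z$-independence of the $a_n$ are superfluous: once $\varphi|_H$ is known to be irreducible and semi-conjugate to a Plante-like action, the surjection $\rho\colon\Z\wr\Z\to H$ is injective \emph{a fortiori}, because Plante-like actions are faithful and a semi-conjugacy from a minimal faithful action cannot kill elements. Second, the ping-pong as sketched is itself incomplete: in a relation $a_{n_1}^{k_1}\cdots a_{n_r}^{k_r}=\id$ the point ``just below $t^{n_r}.b$'' that is moved by $a_{n_r}$ lies in some branch below $t^{n_r}.b$, and there is no reason that branch contains (hence lies above) the points $t^{n_i}.b$ for $i<r$; without that, the point need not be fixed by the lower-index factors, and the evaluation argument breaks. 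So you should either drop that step in favour of the faithfulness argument, or substantially strengthen the control on the directions in which the $a_{n_i}$ act.
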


 \begin{proof} 
 	By Lemma \ref{l-pseudohomtothety-exist} there exists an element $s\in G$ which acts as an expanding pseudo-homothety.  Let $[\xi_1, \xi_2]$ be an interval containing $\fix^{\varphi}(s)$.  Let $A$ be a non-trivial abelian normal subgroup of $G$. Since the action of $G$ is faithful and minimal, we have $\fix^\varphi(A)=\varnothing$, hence there exists $a\in A$ such that $a.\xi_0>\xi_1$. Then $\fix^\varphi(a)\cap \fix^\varphi(s)=\varnothing$ and thus $H:=\langle a, s\rangle$ acts irreducibly. The conjugates of $a$ by powers of $s$ all belong to $A$ and thus commute. It follows that there is a well-defined homomorphism $\rho\colon \Z\wr \Z\to H$  sending $h_0$ to $a$ and $g$ to $s$. Now $\varphi\circ \rho$ cannot be semi-conjugate to any affine action (for instance because $\varphi(G)$ does not contain any element conjugate to a translation,  by Proposition \ref{p-classification-elements}), and thus it is semi-conjugate to a Plante-like action by Proposition \ref{p-ZwrZ}; we have \textit{a fortiori} that $\rho$ is an isomorphism, as Plante-like actions are faithful. \qedhere
 \end{proof}

\section{$C^1$ actions on intervals} \label{sc.C1}
 In this section we prove Theorem \ref{mthm.C1}. After Proposition \ref{p-plante-ubiquitous}, we are reduced to consider actions which are semi-conjugate to Plante-like actions of $\Z \wr \Z$. Let us outline the main idea, which comes from the work of Bonatti, Monteverde, Navas, and the third author \cite{BMNR}.  It is proven there that for any $C^1$ action of the Bausmlag--Solitar group $\BS(1, n)=\langle a, b\mid aba^{-1}=b^n\rangle$ on the closed interval, which is topologically conjugate to its standard affine action, the derivative of the image of $a$ at its unique fixed point must be equal to $n$. Observe that the affine action of $\BS(1, n)$ can also be seen as an action of $\Z \wr \Z$, by precomposing it with the epimorphism $\Z\wr \Z\to \BS(1, n)$ defined on the generators as $h_0\mapsto b$ and $g\mapsto a$. Letting $n$ increase we obtain a sequence $(\varphi_n)$ of actions of $\Z \wr \Z$. One can show (using Proposition \ref{p-ZwrZ}) that  the sequence $(\varphi_n)$ can be conjugated to make it converge to a Plante-like action of $\Z \wr \Z$\footnote{The intuition behind this claim is that the lexicographic orders on the ring of Laurent polynomials $\Z[X, X^{-1}]$, which gives rise to the Plante-like actions, can be approximated by a one-parameter family of preorders obtained by setting $P\ge_\lambda 0$ if $P(\lambda)\ge 0$, for $\lambda>0$, as $\lambda$ approaches $+\infty$. These preorders correspond to the affine actions of $\Z \wr \Z$. We omit details as these facts will not be formally needed.}. This strongly suggests that if a Plante-like action of $\Z \wr \Z$ were conjugate to a $C^1$ action, the derivative of $g$ at its unique fixed point should be infinite, thus providing a contradiction. While this limit argument is not an actual proof (as the conjugacy and the convergence are only $C^0$), it turns out that the argument in the proof of \cite[Proposition 4.13]{BMNR} can be adapted to Plante-like actions to obtain the following.

\begin{prop}\label{p-plante-C1}
Let $\rho\colon H\to \Z\wr \Z$ be an epimorphism from a finitely generated group $H$, and let $\psi \colon H\to \homeo_0(\R)$ be the action obtained by postcomposing $\rho$ with a Plante-like action. Let $\varphi\colon H\to \homeo_0([0, 1])$ be an action semi-conjugate to $\psi$. Then, $\varphi$ cannot be of class $C^1$ on $[0,1]$.  
\end{prop}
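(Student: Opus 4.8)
The plan is to reduce the statement to a single element of a single explicit group and then run a renormalization argument at a fixed point, adapting the $C^1$ derivative estimate of \cite[Proposition 4.13]{BMNR}. Since $\rho$ is onto, I would first fix preimages $\bar g,\bar h_0\in H$ with $\rho(\bar g)=g$, $\rho(\bar h_0)=h_0$, and set $\bar h_n=\bar g^{\,n}\bar h_0\bar g^{-n}$, so that $\rho(\bar h_n)=h_n$. Writing $a=\varphi(\bar g)$ and $b_n=\varphi(\bar h_n)$, the relations $b_n=a^{n}b_0a^{-n}$ hold in $\varphi(H)$ because they already hold in $H$ (note that I do \emph{not} need the $b_n$ to commute, which may fail for $\varphi$). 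Up to replacing the chosen Plante-like action by one of its companions --- which only amounts to reversing the orientation of $\R$ and/or exchanging $g$ with $g^{-1}$ --- I may assume $\psi$ is built from $\prec^+_{\max}$, so that $g$ is an expanding homothety fixing $0$ with $g(y)>y$ for $y>0$ and $h_0(0)>0$. Let $h\colon(0,1)\to\R$ be a non-decreasing surjective equivariant map realizing the semi-conjugacy of $\varphi$ to its minimal model $\psi$, so that $h(\varphi(\gamma)(x))=\psi(\gamma)(h(x))$ for all $\gamma\in H$.

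All the dynamical data I need can then be read off from $\psi$ through $h$. The fiber $F:=h^{-1}(0)$ is a point or a closed interval, and it is $a$-invariant since $0$ is fixed by $g$; let $d=\max F$, so that $h(d)=0$ and $a(d)=d$. For $x\in(d,1)$ one has $h(x)>0$, hence $h(a(x))=g(h(x))>h(x)$, giving $a(x)>x$; thus $a$ has no fixed point in $(d,1)$ and expands to the right of $d$. In particular $a$ is $C^1$ with $a(d)=d$ and multiplier $\mu:=a'(d)\in[1,+\infty)$, and it is precisely the finiteness of $\mu$ --- the consequence of $C^1$ regularity --- that I aim to contradict. Since $h(b_0^{N}(d))=h_0^{N}(0)$ and $h(a(b_0(d)))=g(h_0(0))$ correspond, under the order bijection $\Z[X,X^{-1}]\to\Q$, to the polynomials $N\cdot 1$ and $X$, the relation $N\cdot 1\prec^+_{\max}X$ (valid for every $N$) and the strict monotonicity of $h$ translate into the key inequality
\[ b_0^{N}(d)\;<\;a\bigl(b_0(d)\bigr)\quad\text{for every }N\ge 1,\qquad(\star) \]
This is the crucial point where $\Z\wr\Z$ differs from $\BS(1,n)$: there the analogous chain closes after exactly $n$ steps, $b_{-1}^{\,n}(d)=b_0(d)$, yielding the realizable value $\mu=n$, whereas for a Plante-like action $(\star)$ never closes, morally forcing $\mu=+\infty$.

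Next I would set up the renormalization. Put $u_N:=b_0^{N}(d)$, an increasing sequence in $(d,1)$ with supremum $q_0$, and $t_{-m}:=b_{-m}(d)=a^{-m}(u_1)$ (using $b_{-m}=a^{-m}b_0a^{m}$ and $a(d)=d$), a decreasing sequence converging to $d$. Because $a^{-1}$ is a $C^1$ map fixing $d$ with $(a^{-1})'(d)=1/\mu$, the consecutive-gap ratios satisfy
\[ R_m:=\frac{t_{-m}-d}{t_{-(m+1)}-d}=\frac{a^{-m}(u_1)-d}{a^{-(m+1)}(u_1)-d}\xrightarrow[m\to\infty]{}\mu. \]
On the other hand, applying the increasing map $a^{-(m+1)}$ to $(\star)$ gives $b_{-(m+1)}^{N}(d)=a^{-(m+1)}(u_N)<a^{-m}(u_1)=t_{-m}$, so the first $N$ points of the $b_{-(m+1)}$-orbit of $d$ all lie in $[d,t_{-m}]$; consequently
\[ R_m\;\ge\;\frac{b_{-(m+1)}^{N}(d)-d}{b_{-(m+1)}(d)-d}\;=\;\frac{a^{-(m+1)}(u_N)-d}{a^{-(m+1)}(u_1)-d}\qquad\text{for every }N. \]

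It then remains to show that this lower bound can be made arbitrarily large, uniformly enough to survive the limit $m\to\infty$, which would give $\mu\ge N$ for every $N$ and hence the contradiction $\mu=+\infty$. Geometrically, $a$ carries the single fundamental domain $[d,t_{-(m+1)}]$ of $b_{-(m+1)}$ onto $[d,t_{-m}]$, which by $(\star)$ contains at least $N$ such domains for every $N$; were these renormalized domains of comparable size, the expansion factor $\mu$ would be forced beyond every bound. Making this precise is the heart of the matter and the main obstacle, because it amounts to controlling the distortion of the high iterates $a^{-(m+1)}$ on the fixed interval $[d,q_0]$ --- a quantity that is genuinely unbounded for merely $C^1$ maps near a hyperbolic fixed point, and which here must be \emph{forced} to diverge (rather than merely permitted to) by the open-ended relation $(\star)$. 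This is exactly the delicate estimate carried out in \cite[Proposition 4.13]{BMNR} for the affine action of $\BS(1,n)$, and I would adapt that computation, replacing the finite closure $b_{-1}^{\,n}(d)=b_0(d)$ by $(\star)$ so that the constant $n$ in the $\BS(1,n)$ argument is replaced by an arbitrary $N$. Finally I would dispose of the remaining cases: when $F$ is a non-degenerate interval the argument is unchanged (one still works at the endpoint $d$), and the three other Plante-like actions are covered by the symmetry reduction of the first paragraph.
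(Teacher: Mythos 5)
Your reduction to a single pair $\bar g,\bar h_0$, the key order inequality $(\star)$, and the identities $t_{-m}=a^{-m}(u_1)$, $R_m\to\mu$ are all correct, and the idea of contrasting the ``open-ended'' chain $b_0^{N}(d)<a(b_0(d))$ with the closed chain of $\BS(1,n)$ is the right heuristic. But the argument has a genuine gap exactly where you flag it, and it is not a gap that can be closed by ``adapting the computation'': the quantity you need to bound from below, $\bigl(a^{-(m+1)}(u_N)-d\bigr)/\bigl(a^{-(m+1)}(u_1)-d\bigr)$, is a ratio of distances to a $C^1$-hyperbolic fixed point after $m+1$ iterations, and for a map that is merely $C^1$ at $d$ this ratio can tend to $1$ as $m\to\infty$ (the renormalized domains $a^{-(m+1)}([u_k,u_{k+1}])$ need not be of comparable size; bounded distortion fails in $C^1$). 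Your inequality $R_m\ge \rho_{m+1}$ with $\rho_m:=\bigl(a^{-m}(u_N)-d\bigr)/\bigl(a^{-m}(u_1)-d\bigr)$ then only yields $\mu\ge\limsup\rho_m\ge 1$, which is vacuous. Nothing in $(\star)$ alone forces $\rho_m$ to stay large: $(\star)$ is an inequality between two points of the orbit, not a distortion bound, so the divergence of $\mu$ is ``permitted'' but not ``forced'' by the framework you set up. In short, the heart of the proof is missing, and the fixed-point/multiplier framing is the wrong place to look for it.

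The paper's proof avoids the fixed point $d$ entirely and replaces the multiplier computation by a global counting argument. After the Muller--Tsuboi trick (normalizing $\varphi(k)'(0)=\varphi(k)'(1)=1$ for all $k$), one sets $J_n=\tau^{-1}(I_n)$ and gets the crude global bound $|J_{-n}|\ge|J_0|\lambda^{n}$ with $\lambda=\min|D\varphi(\widetilde g^{-1})|$. The decisive input is combinatorial: for each of the $N^{n}$ words $\widetilde f_{\underline i}=\widetilde h_{-n+1}^{i_{n-1}}\cdots\widetilde h_0^{i_0}$ with exponents in $\{1,\dots,N\}$, the intervals $\varphi(\widetilde f_{\underline i}\widetilde a)(J_{-n})$ are pairwise \emph{disjoint} (Lemma~\ref{l-disjoint-intervals}, which uses the full lamination structure of the Plante-like action, not just the single relation $(\star)$), and each has length at least $C|J_0|(1-\varepsilon)^{nN+2n-2}\lambda^{n}$ because the word can be rewritten as $\widetilde g^{-n+1}\widetilde h_0^{i_{n-1}}\widetilde g\cdots\widetilde g\widetilde h_0^{i_0}$ with all intermediate images confined to a region $(\sigma,1]$ where every generator has derivative at least $1-\varepsilon$. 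Choosing $N>1/\lambda$ and $\varepsilon$ with $(1-\varepsilon)^{2N+2}N\lambda>1$ makes the total length of these disjoint intervals exceed $1$ for large $n$. If you want to salvage your write-up, the step to import is this exponential multiplicity of disjoint intervals with a uniform lower length bound, rather than any estimate on $a'(d)$.
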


Before the proof, let us recall from  \S\S \ref{ssc.plante} and  \ref{s-plante-laminations} some general properties of Plante-like actions, that will be used in the proof. 
As before, we denote by $g$ and $h_0$ the standard generators of $\Z \wr \Z$, and set $h_n=g^n h_0g^{-n}$. For definiteness, let  $\eta\colon \Z\wr\Z\to \homeo_0(\R)$ be the Plante-like action associated with the lexicographic order $\prec_{\max}^{+}$. Recall that we choose the order-preserving bijection $t\colon \Z[X, X^{-1}]\to \Q$ used to construct the Plante-like action in such a way that the polynomial 0 is sent to the origin $0\in \R$, and this implies that $\eta(g)$ is an expanding homothety with $0$ as fixed point. In contrast, for every $n\in \Z$,  the element $h_n$ satisfies $\eta(h_n)(x)\ge x$ for every $x\in \R$, with strict inequality for $x=0$ (since $h_n$ corresponds to the  polynomial $X^n\in \Z[X, X^{-1}]$, which is positive for the lexicographic order $\prec_{\max}^+$).

For $n\in \Z$, let $I_n$ be the connected component of $\supp^\eta(h_n)$ containing $0$. Recall from the proof of Proposition \ref{prop Plante is laminar} (in particular, see Claim \ref{l-plante-fixed-points}) that each $I_n$ is a bounded interval belonging to an invariant lamination, and that $\overline{I}_n\subseteq I_{n+1}$. Note also that $\eta(g)(I_n)=I_{n+1}$ (since $gh_ng^{-1}=h_{n+1}$). We will use the following  property of the Plante-like action $\eta$.
\begin{lem} \label{l-disjoint-intervals}
With notation as above, for $n\in \Z$, consider the subgroup  $L_n:=\langle h_j\, (j\geq n)\rangle$ of $\Z\wr \Z$. 
If $f_1, f_2\in L_{-n}$ are distinct elements, then   $\eta(f_1)(I_{-n-1})$ and $\eta(f_2)(I_{-n-1})$ are disjoint. \end{lem}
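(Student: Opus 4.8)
The plan is to transport the statement into the combinatorics of the ordered set $(\Z[X,X^{-1}], \prec_{\max}^+)$, on which $L$ acts by additive translation $f\cdot Q = f+Q$ and which is identified order-equivariantly with $(\Q,<)\subset(\R,<)$ through the bijection $t$ used to build $\eta$. Since the image of $t$ is dense and each $\eta(f_i)(I_{-n-1})$ is an open interval, two such intervals are disjoint if and only if their intersections with $t(\Z[X,X^{-1}])$ are disjoint; so it suffices to understand which polynomials are mapped by $t$ into $\eta(f_i)(I_{-n-1})$. As $\eta(f_i)$ realizes the translation by $f_i$ on $t(\Z[X,X^{-1}])$, we have $\eta(f_i)(I_{-n-1})\cap t(\Z[X,X^{-1}]) = t(f_i + P)$, where $P:=t^{-1}(I_{-n-1}\cap\Q)$ is the set of polynomials sent into $I_{-n-1}$. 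Thus the whole problem reduces to identifying $P$ inside $\Z[X,X^{-1}]$ and comparing the translates $f_1+P$ and $f_2+P$.

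First I would identify $P$. Recall from the proof of Claim \ref{l-plante-fixed-points} that $I_{-n-1}$ is the convex hull of the $h_{-n-1}$-orbit of $0$, that is, of $t(\{kX^{-n-1}:k\in\Z\})$. Hence $t(Q)\in I_{-n-1}$ exactly when $k_1X^{-n-1}\prec_{\max}^+ Q\prec_{\max}^+ k_2X^{-n-1}$ for some integers $k_1<k_2$. Writing out the definition of $\prec_{\max}^+$ and inspecting the highest-degree monomial of $Q-kX^{-n-1}$, a short case analysis on the leading term of $Q$ shows that this happens precisely when every nonzero monomial of $Q$ has degree at most $-n-1$: if $Q$ had a monomial of degree $>-n-1$, its leading coefficient would force $Q$ to lie entirely above or entirely below the orbit, whereas if all monomials of $Q$ have degree $\le -n-1$ then $Q$ is squeezed strictly between two multiples of $X^{-n-1}$. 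Therefore $P=V$, where $V$ denotes the additive subgroup of $\Z[X,X^{-1}]$ spanned by the monomials $X^j$ with $j\le -n-1$.

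Finally I would conclude by a coset argument. Since $V$ is a subgroup, the translates $f_1+V$ and $f_2+V$ are either equal or disjoint, and they are disjoint precisely when $f_1-f_2\notin V$. Now $f_1,f_2\in L_{-n}$ correspond to polynomials lying in the additive subgroup $W$ spanned by the monomials $X^j$ with $j\ge -n$, and $W\cap V=\{0\}$ since no nonzero polynomial can have all of its monomials of degree simultaneously $\ge -n$ and $\le -n-1$. As $f_1\ne f_2$, the difference $f_1-f_2$ is a nonzero element of $W$, hence not in $V$; thus $f_1+V$ and $f_2+V$ are disjoint, and so are $\eta(f_1)(I_{-n-1})$ and $\eta(f_2)(I_{-n-1})$.

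The only genuinely delicate point is the identification $P=V$: one must use that $I_{-n-1}$ is an open interval whose (possibly irrational) endpoints are fixed points of $\eta(h_{-n-1})$, so that its rational points are \emph{exactly} the convex hull of the orbit, and one must run the leading-term case analysis cleanly, keeping track of strict versus non-strict inequalities in the definition of $\prec_{\max}^+$. Everything else is a formal consequence of the equivariance of $t$ and elementary coset combinatorics.
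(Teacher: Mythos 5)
Your proof is correct, but it takes a genuinely different route from the paper's. The paper argues dynamically: writing $f=h_{m_1}^{k_1}\cdots h_{m_r}^{k_r}$ with $m_1>\cdots>m_r\ge -n$, it uses the nesting $I_{-n-1}\subseteq I_{m_2}\subseteq I_{m_1-1}$ and the invariance of each $I_{m_i}$ under $\eta(h_{m_i})$ to show that the tail of the word keeps $I_{-n-1}$ inside $I_{m_1-1}$, and then observes that $\eta(h_{m_1}^{k_1})$ moves both endpoints of $I_{m_1-1}$ in the same direction, so by the non-crossing condition $\eta(f)(I_{-n-1})$ is pushed entirely off $I_{m_1-1}\supseteq I_{-n-1}$; this only directly gives disjointness of $\eta(f)(I_{-n-1})$ from $I_{-n-1}$, which suffices after translating by $f_2^{-1}$. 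You instead work combinatorially in $(\Z[X,X^{-1}],\prec_{\max}^+)$ and identify the rational trace of $I_{-n-1}$ as the additive subgroup $V$ of Laurent polynomials supported in degrees $\le -n-1$ (your leading-term case analysis for this is sound, and the passage between open intervals and their intersections with the dense set $t(\Z[X,X^{-1}])$ is handled correctly), after which disjointness of the translates $f_1+V$ and $f_2+V$ is automatic coset combinatorics since $f_1-f_2$ is a nonzero polynomial supported in degrees $\ge -n$. Your approach buys a sharper structural statement (the rational points of the translates $\eta(f)(I_{-n-1})$ form a single coset decomposition of a subgroup, making the pairwise disjointness over all of $L_{-n}$ transparent at once), at the cost of the one delicate verification that the rational points of $I_{-n-1}$ are \emph{exactly} $t(V)$; the paper's argument avoids any such identification by leaning on Claim \ref{l-plante-fixed-points} and the cross-free property of the invariant prelamination, which is the mechanism it reuses throughout Section \ref{s-plante-laminations}.
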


\begin{proof}
It is enough to show that $\eta(f)(I_{-n-1})$ and$I_{-n-1}$  are disjoint whenever $f\in L_{-n}$ is non-trivial. Such an $f$ can be uniquely written as $f=h_{m_1}^{k_1}\cdots k_{m_r}^{k_r}$, with $m_1>\cdots >m_r\ge -n$ and $k_1,\ldots, k_r\neq 0$. Claim \ref{l-plante-fixed-points} in Proposition \ref{prop Plante is laminar} gives the chain of inclusions $I_{-n-1}\subset I_{m_1}\subset \cdots \subset I_{m_r}$. Since each $I_{m_i}$ is $\eta(h_{m_i})$-invariant,  this implies
\[\eta(h_{m_2}^{k_2}\cdots h_{m_r}^{k_r})(I_{-n-1})\subset I_{m_2}\subseteq I_{m_1-1}.\]
Now note that $\eta(h_{m_1})^{k_1}$ moves both endpoints of $I_{m_1-1}$, and in the same direction (depending on the sign of $k_1$); since $\eta(h_{m_1}^{k_1})(I_{m_1-1})$ and $I_{m_1-1}$ cannot cross, they must be disjoint. This gives the desired conclusion. 
\end{proof}
 
Lemma \ref{l-disjoint-intervals} allows to produce plenty of disjoint images of $I_0$ using controlled group elements. On the other hand, if the action $\eta$ were semi-conjugate to a $C^1$ action on $[0, 1]$, the length of these intervals can be bounded below using derivatives. This is the strategy to obtain a contradiction in the proof below.

\begin{proof}[Proof of Proposition \ref{p-plante-C1}] 
As above we assume that $\eta\colon \Z\wr\Z\to \homeo_0(\R)$ is the Plante-like action associated with the lexicographic order $\prec_{\max}^{+}$, and that the action $\psi$ in the statement is $\psi=\eta\circ \rho$ (the case of the other Plante-like actions can be treated similarly). We choose and fix preimages $\widetilde{g}, \widetilde{h}_0\in H$ of the generators $g, h_0$ under the epimorphism $\rho$, and set $\widetilde{h}_n:=\widetilde{h}_0$.   Let $\varphi$ be as in the statement and, looking for a contradiction, assume  that $\varphi$ is of class $C^1$. As $H$ is finitely generated, using a trick attributed to Muller \cite{Muller} and Tsuboi \cite{Tsuboi} (see also \cite{BMNR}), we can assume that $\varphi(k)'(0)=\varphi(k)'(1)=1$ for every $k\in H$.

Let $\tau\colon(0, 1)\to \R$ be a non-decreasing map that semi-conjugates $\varphi$ to $\psi$, in the sense that $\tau\varphi(k)=\psi(k)\tau$ for every $k\in H$. As Plante-like actions are minimal (Proposition \ref{prop Plante-like minimal}), the semi-conjugacy $\tau$ must be  continuous. To avoid confusion, we will use Latin letters to denote points of $\R$ (where the Plante-like action $\eta$ is defined), and Greek letters for points of $[0, 1]$ (where the action $\varphi$ is defined). Set $J_n=\tau^{-1}(I_n)$ for $n\in \Z$. Note that $\varphi(\tilde{g})(J_n)=J_{n+1}$ for every $n\in \Z$. Let $\lambda:=\min_{\xi\in[0,1]}|D\varphi(\widetilde{g}^{-1})(\xi)|$. The mean value theorem implies that for every $n\in \Z$ we have the length estimate  
\begin{equation} |J_{-n}|\geq |J_0|\lambda^{n}. \label{e-Jn}\end{equation}
We fix $N>\frac{1}{\lambda}$ and $\varepsilon>0$  such that 
\begin{equation} (1-\varepsilon)^{2N+2}N\lambda>1. \label{e-epsilon-choice} \end{equation}
Let $\sigma\in (0,1)$ be such that  for any $\xi\in (\sigma, 1]$ and  $s\in  \{\widetilde{h}_0, \widetilde{g}, \widetilde{g}^{-1} \}$, one has $ D\varphi(s)(\xi)>1-\varepsilon$.  Since the subgroup $L$ acts without fixed points in the Plante-like action, we can find an element $a\in L$ such that $\eta(a)(\overline{I_0})\subset (\tau(\sigma), 1]$. Hence, choosing a preimage $\widetilde{a}\in \rho^{-1}(a)$ we have $\varphi(\widetilde{a})(\overline{J_0})\subset (\sigma, 1]$.

Fix $n\geq 1$.  Given  an $n$-tuple  of integers $\underline{i}=(i_0, \ldots, i_{n-1})\in \{1,\ldots,N\}^n$, set $\widetilde{f}_{\underline{i}}=\widetilde{h}_{-n+1}^{i_{n-1}}\cdots \widetilde{h}_{-1}^{i_1}  \widetilde{h}_0^{i_0}$ and ${f}_{\underline{i}}=\rho\left (\widetilde{f}_{\underline{i}}\right )$. Note that the intervals of the form $\varphi(\widetilde{f}_{\underline{i}}\widetilde{a})(J_{-n})$ are  pairwise disjoint when $\underline{i}$ varies. Indeed, the semi-conjugacy  $\tau$ maps each such interval to the corresponding interval $\eta(f_{\underline{i}}a)(I_{-n})=\eta(af_{\underline{i}})(I_{-n})$, and these are pairwise disjoint by Lemma \ref{l-disjoint-intervals}, since the elements $f_{\underline{i}}$ all belong to the subgroup $L_{-n+1}$. We shall estimate from below the size of $\varphi(\widetilde{f}_{\underline{i}}\widetilde{a})(J_{-n})$. For this, note that $\widetilde{f}_{\underline{i}}$ may be rewritten as 
 \begin{align*}
 	\widetilde{f}_{\underline{i}}&=\left (\widetilde{g}^{-n+1}\widetilde{h}_0^{i_{n-1}}\widetilde{g}^{n-1}\right )\cdots\left (\widetilde{g}^{-2}\widetilde{h}_0^{i_2}\widetilde{g}^2\right )\left (\widetilde{g}^{-1}\widetilde{h}_0^{i_1}\widetilde{g}\right )\widetilde{h}_0^{i_0}\\
 	&=\widetilde{g}^{-n+1}\widetilde{h}_0^{i_{-n+1}}\widetilde{g}\widetilde{h}_0^{i_{-n+2}}\widetilde{g}\cdots \widetilde{g}\widetilde{h}_0^{i_1}\widetilde{g}\widetilde{h}_0^{i_0}.
 \end{align*}
 Consider the sequence of intervals obtained by applying successively the terms in the latter expression to the interval $\varphi(\widetilde{a})(J_{-n})$. Observe that any such interval stays inside $(\sigma, 1]$: indeed, each application of the generator $\widetilde{g}$ or $\widetilde{h}_0$  moves the interval to the right, and the  final term $\widetilde{g}^{-n+1}$ moves it to the left but is compensated by the previous $n-1$ instances of $\widetilde{g}$ with a positive power. By the mean value theorem and the choice of $\sigma$, at each application of $\widetilde{g}, \widetilde{g}^{-1}$ or $\widetilde{h}_0$, the size of the interval may decrease by at most a factor $1-\varepsilon$. Hence
 \[\left |\varphi(\widetilde{f}_{\underline{i}}\widetilde{a})(J_{-n})\right |\geq (1-\varepsilon)^{nN+2n-2}\left |\varphi(\widetilde{a})(J_{-n})\right |\geq C|J_0|(1-\varepsilon)^{nN+2n-2}\lambda^n,\]
 where we have used \eqref{e-Jn}, and the constant $C$ is $\min_{\xi\in[0,1]} D\varphi(\widetilde{a})(\xi)$. Summing over $\underline{i}$ and using that the intervals are disjoint, we have
 \[1\geq \sum_{\underline{i}\in\{1,\ldots,N\}^n}\left |\varphi(\widetilde{f}_{\underline{i}}\widetilde{a})(J_{-n})\right |\geq C|J_0| (1-\varepsilon)^{-2}\left ((1-\varepsilon)^{2N+2}N\lambda\right )^n,\]
 which is impossible, since by \eqref{e-epsilon-choice} the right-hand side is unbounded as $n$ tends to $+\infty$. This is the desired contradiction. 
 \end{proof}
 
 We will also use the following result, whose proof readily follows from the discussion in  \cite[\S 4.2]{BMNR}.
 \begin{lem} \label{l-upgrade-to-conjugacy}
 Let $\varphi\colon G\to \Diff_0^1([0, 1])$ be an irreducible action. Suppose that $\varphi$ is semi-conjugate to an action $\psi\colon G\to \homeo_0(\R)$, and that there exist elements $a, b\in G$ such that $\psi(a)$ is a homothety $x\mapsto \lambda x$ with $\lambda>1$ and $\psi(b)$ is a translation $x\mapsto x+\alpha$ with $ \alpha \neq 0$. Then $\varphi$ is minimal on $(0, 1)$, and thus conjugate to $\psi$. 
 \end{lem}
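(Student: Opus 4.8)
The plan is to reduce everything to the statement that $\varphi$ is \emph{minimal} on $(0,1)$, since the conjugacy then follows formally. First observe that $\psi$ itself is minimal: the pair $\{a,b\}$ maps under $\psi$ to a homothety $x\mapsto \lambda x$ with $\lambda>1$ and a nontrivial translation $x\mapsto x+\alpha$, and conjugating the translation by powers of the homothety produces the translations $x\mapsto x+\lambda^m\alpha$ for all $m\in\Z$, hence translations of arbitrarily small amplitude; therefore $\langle \psi(a),\psi(b)\rangle$, and a fortiori $\psi(G)$, acts minimally on $\R$. Let $h\colon (0,1)\to\R$ be the monotone $G$-equivariant map realizing the semi-conjugacy, say non-decreasing (the other case being symmetric). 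If I can show $h$ is injective, then $h$ is strictly increasing with dense image (its image is $\psi$-invariant, so its closure is $\R$ by minimality of $\psi$), hence continuous and a homeomorphism onto $\R$ that conjugates $\varphi|_{(0,1)}$ to $\psi$; in particular $\varphi$ is minimal on $(0,1)$. Conversely, exactly as in the proof of Lemma \ref{l-dynamical-realisation}, if $h$ were not injective then the open set of points near which $h$ is locally constant would be a nonempty proper $\varphi$-invariant subset, contradicting minimality. Thus the whole statement reduces to proving that $h$ is injective, equivalently that $\varphi$ has no \emph{wandering interval} (a nondegenerate fiber $h^{-1}(p)$).

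Before attacking this I would reduce to a finitely generated group. The nondegenerate fibers of $h$ depend only on $h$, not on the acting group, and both $\varphi$ and its restriction to the $2$-generated subgroup $\langle a,b\rangle$ are semi-conjugate, via the \emph{same} map $h$, to minimal actions (of $G$ and of $\langle a,b\rangle$, the latter minimal because $\psi(b)$ alone is fixed-point free). Hence $\varphi$ has a wandering interval if and only if $\varphi|_{\langle a,b\rangle}$ does, and it suffices to exclude this for the finitely generated group $\langle a,b\rangle$. After this reduction I invoke the Muller--Tsuboi normalization, as in the proof of Proposition \ref{p-plante-C1}, to assume $D\varphi(k)(0)=D\varphi(k)(1)=1$ for every $k\in\langle a,b\rangle$.

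The heart of the argument, and the main obstacle, is to derive a contradiction from the existence of a wandering interval; this is precisely the content of \cite[\S4.2]{BMNR}, and the mechanism is the one already used in the proof of Proposition \ref{p-plante-C1}. Writing $A=\varphi(a)$ and $B=\varphi(b)$, the element $B$ has no fixed point in $(0,1)$ (since $\psi(b)$ is a fixed-point-free translation, $h(B(x))=h(x)+\alpha\neq h(x)$), while $A$ fixes the endpoints $p_-\le p_+$ of the fiber $h^{-1}(0)$ and, on $(p_+,1)$, satisfies $h(A^n(x))=\lambda^n h(x)\to+\infty$, so $A^{-1}$ contracts towards $p_+$. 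Given a wandering interval $J$, I would use $A^{-1}$ to manufacture, for each $n$, a comparatively long preimage interval (of length at least $c\,\lambda^n|J|$ by the mean value theorem) and the conjugates $a^m b a^{-m}$ of the translation to split its $A^{-n}$-orbit into an exponentially growing family of pairwise disjoint images, disjointness being automatic because their $h$-images are distinct translates of a single point. Estimating the lengths of these images from below through the mean value theorem together with the normalization of the derivatives, and choosing the combinatorial parameters analogously to the proof of Proposition \ref{p-plante-C1}, forces the total length of these disjoint subintervals of $[0,1]$ to diverge, the desired contradiction. This shows $h$ is injective, whence $\varphi$ is minimal on $(0,1)$ and conjugate to $\psi$.
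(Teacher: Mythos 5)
Your overall architecture is sound and is essentially what the paper intends (the paper itself gives no proof of this lemma beyond citing \cite[\S 4.2]{BMNR}): show $\psi$ is minimal, reduce the whole statement to injectivity of the semi-conjugacy $h$, restrict to the two-generated subgroup $\langle a,b\rangle$, normalize derivatives at the endpoints \`a la Muller--Tsuboi, and run a disjoint-intervals counting argument in the style of Proposition \ref{p-plante-C1}. All the preliminary reductions are correct.

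The gap is in the final counting step, at the phrase ``disjointness being automatic because their $h$-images are distinct translates of a single point.'' Your elements are products of conjugates $a^{-j}ba^{j}$, and the product indexed by $\underline i=(i_1,\dots,i_n)\in\{1,\dots,N\}^n$ acts under $\psi$ as the translation by $\alpha\sum_j i_j\lambda^{-j}$. These amounts are pairwise distinct for all tuples only if $\lambda$ satisfies no relation $\sum_j c_j\lambda^{-j}=0$ with integer coefficients $|c_j|\le N-1$; for algebraic $\lambda$ (e.g.\ $\lambda=2$ with $N\ge 3$, or $\lambda$ the golden ratio already with $N=2$) such relations exist, and two tuples yielding the same translation produce the \emph{same} fiber of $h$, i.e.\ the same interval, so you are over-counting. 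This is fatal in your scheme because you need $N$ to exceed $1/\min_{[0,1]}D\varphi(a)^{-1}$ to beat the contraction at the interior fixed point of $\varphi(a)$, and the number of genuinely distinct translates grows only like a constant times $\lambda^{n}$; since (as \cite{BMNR} shows) the derivative of $\varphi(a)$ at that fixed point is exactly $\lambda$, the two exponential rates cancel and the estimate is inconclusive. The repair is not hard but must be said: replace $a^{-j}ba^{j}$ by the sparse conjugates $a^{-jd}ba^{jd}$, $j=1,\dots,n$, with $d$ chosen so that $\lambda^{d}>N$; a geometric-series estimate on a lowest nonzero coefficient shows the sums $\sum_j i_j\lambda^{-jd}$ are then pairwise distinct. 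Once distinctness holds you may in fact drop the contraction step altogether and apply the $N^n$ elements directly to a translate of $J$ near the endpoint: one gets $N^n$ pairwise disjoint fibers of length at least $(1-\varepsilon)^{n(2d+N)}$ times a constant, and $N(1-\varepsilon)^{2d+N}>1$ already for $N=2$ and $\varepsilon$ small, which gives the contradiction.
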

 
\begin{proof}[Proof of Theorem \ref{mthm.C1}]
Let $\varphi\colon G\to \Diff^1([0,1])$ be an irreducible action. If by contradiction $\varphi$ to $(0, 1)$ is not semi-conjugate to any affine action, by Theorem \ref{t-main} it is semi-conjugate to a minimal laminar action $\psi\colon G\to \homeo_0(\R)$, whose kernel we denote by $N$. By Proposition \ref{p-plante-ubiquitous} we have that $G/N$ contains a subgroup $ \Z \wr \Z$ that acts via an action which is semi-conjugate to a Plante-like action. If we choose two preimages $\widetilde{h}_0,\widetilde{g}\in G$ of the corresponding generators of $\Z \wr \Z$, the restriction of $\varphi$ to the group $H= \langle \widetilde{h}_0, \widetilde{g} \rangle$ satisfies all assumptions of Proposition \ref{p-plante-C1}, giving a contradiction. The last sentence of the theorem follows from Lemma \ref{l-upgrade-to-conjugacy}.
\end{proof}

\section{The metanilpotent case} \label{s-metanilpotent}

In this section we consider the class of {virtually metanilpotent} groups. Recall that $G$ is virtually metanilpotent if it admits a finite index subgroup $G_0\le G$ which is metanilpotent, meaning that there is a normal subgroup $N\unlhd G_0$ such that $N$ and $G_0/N$ are both nilpotent. 

\begin{rem} \label{r-Malcev}
The class of virtually metanilpotent groups contains in particular metabelian groups and all virtually solvable linear groups, i.e.\ virtually solvable subgroups of $\GL(n, \mathbb{K})$ where $\mathbb{K}$ is a field. Indeed by a classical theorem of Mal'cev \cite{Malcev}, every solvable subgroup of $\GL(n, \mathbb{K})$ has a finite index subgroup which is triangularizable over the algebraic closure of $\mathbb{K}$, and thus is nilpotent-by-abelian. 
\end{rem}

For this class of groups, the results in Section \ref{sec Focal solvable} become much stronger.  Theorem \ref{t-solvable-inductive} specifies as follows. 
\begin{cor}\label{c-metanilpotent-tree}
Let $\Phi \colon G\to \Aut(\Tbb, \treeorder)$ be a d-minimal action of a virtually metanilpotent group on a directed tree. Then $\Phi$ can be horograded by an action by translations $j\colon G\to (\R, +)$. Equivalently, $\Phi $ preserves a compatible $\R$-tree metric. 
\end{cor}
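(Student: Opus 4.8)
The plan is to feed $\Phi$ into Theorem \ref{t-solvable-inductive} and then strengthen its conclusion using that a virtually metanilpotent group has a virtually nilpotent Fitting quotient. Since a d-minimal action is focal (Remark \ref{r.dminimal_is_focal}), I treat the substantive case $\Tbb\neq\R$ covered by Theorem \ref{t-solvable-inductive}; applying that theorem, $\Phi$ is horograded by a minimal action $j\colon G\to\homeo_0(\R)$ with $\Fit(G)\subseteq\ker j$, so that $j$ descends to a minimal action of $G/\Fit(G)$. The first genuinely new ingredient is the group-theoretic observation that $G/\Fit(G)$ is virtually nilpotent: choosing a finite-index normal metanilpotent subgroup $G_0\unlhd G$ with nilpotent normal subgroup $N\unlhd G_0$ and $G_0/N$ nilpotent, Lemma \ref{l-Fitting} gives $N\subseteq\Fit(G)$, so $G_0/(G_0\cap\Fit(G))$ is a quotient of $G_0/N$ and hence nilpotent; as $G_0$ has finite index, so does its image in $G/\Fit(G)$, proving the claim.

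The second step is to promote $j$ from being merely minimal to being conjugate to a translation action. Here I would invoke Plante's measure argument \cite{PlanteMeasure}: a finitely generated group of polynomial (hence subexponential) growth acting on $\R$ preserves a non-zero Radon measure $\mu$. Since $j$ is minimal, $\mu$ has full support and is atomless (an atom would produce an infinite orbit of equal-mass points inside a bounded interval, contradicting local finiteness). The primitive $\theta(x)=\mu([0,x])$, with the evident sign convention for $x<0$, is then an increasing homeomorphism of $\R$ conjugating $j$ to an action $\tau\colon G\to(\R,+)$ by translations, because $\theta(j(g)x)-\theta(j(g)y)=\mu(j(g)[y,x])=\mu([y,x])=\theta(x)-\theta(y)$ for all $g\in G$ and $x>y$. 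If $\pi\colon\Tbb\to\R$ denotes the horograding of $\Phi$ by $j$ and $c\colon\R\to\R$ is the increasing homeomorphism with $c\circ j(g)=\tau(g)\circ c$, then $c\circ\pi$ is an increasing $G$-equivariant surjection restricting to an order isomorphism on every arc, i.e.\ a horograding of $\Phi$ by $\tau$. This yields the first assertion; the equivalence with preserving a compatible $\R$-tree metric is then exactly Proposition \ref{prop.Rtreemetric} (valid since $\Phi$ is focal).

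I expect the main obstacle to lie in this second step rather than in the reduction: Theorem \ref{t-solvable-inductive} only outputs a \emph{minimal} horograding action, and minimality by itself does not force an action on $\R$ to be by translations. The leverage comes entirely from the virtual nilpotency of $G/\Fit(G)$, through which Plante's invariant measure becomes available; verifying the full support and atomlessness of that measure is precisely what upgrades the semi-conjugacy to an honest conjugacy, and is the delicate point. A minor bookkeeping remark is that the statement must be read for genuine trees $\Tbb\neq\R$, consistently with the hypothesis of Theorem \ref{t-solvable-inductive}: for $\Tbb=\R$ the conclusion cannot hold in general, since the standard affine action of $\BS(1,2)$ is d-minimal yet not horograded by translations.
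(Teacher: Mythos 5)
Your proof follows essentially the same route as the paper's: apply Theorem \ref{t-solvable-inductive}, observe via Lemma \ref{l-Fitting} that $G/\Fit(G)$ is virtually nilpotent, upgrade the minimal horograding action $j$ to an action by translations using Plante's invariant Radon measure (atomless and of full support by minimality), and conclude with Proposition \ref{prop.Rtreemetric}. Your closing remark that the statement must be read with $\Tbb\neq\R$, as in the hypothesis of Theorem \ref{t-solvable-inductive}, is a fair observation; the paper's own proof likewise begins by invoking that theorem, so the same implicit restriction is in force there.
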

\begin{proof} 
 By Theorem \ref{t-solvable-inductive}, $\Phi $ can be horograded by a minimal action $j\colon G\to \homeo_0(\R)$ which  factors through $G/\Fit(G)$.  The fact that $G$ is virtually metanilpotent implies that $G/\Fit(G)$ is virtually nilpotent (using Lemma \ref{l-Fitting}). 
Recall that every minimal action of a virtually nilpotent group on the line preserves a Radon measure: for countable groups, this is the classical result of Plante \cite{PlanteMeasure} mentioned in the introduction, and for general groups one can see for instance Beklaryan \cite[Theorem B]{Beklaryan}. Since $j$ is minimal, such a measure must be atomless and of full support, hence it can be mapped to the Lebesgue measure by some homeomorphism, and it follows that $j$ is conjugate to an action by translations. The equivalent formulation  in terms of existence of an invariant metric is a consequence of Proposition \ref{prop.Rtreemetric}. \qedhere

\end{proof}
\begin{rem} The reader may notice that the proof above works more generally for groups  with virtual Fitting length $\vf(G)=2$. However one can check that for finitely generated groups, this condition is equivalent to being virtually metanilpotent. 
\end{rem}

As a consequence, Theorem \ref{t-main} translates here as follows. 

\begin{cor}\label{c-metanilpotent-dichothomy}
Any minimal action $\varphi\colon G\to \homeo_0(\R)$ of a finitely generated virtually metanilpotent group is either conjugate to an affine action, or laminar, in which case it can be horograded by an action by translations $j\colon G\to (\R, +)$.
\end{cor}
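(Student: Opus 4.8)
The plan is to derive this statement directly from Theorem \ref{t-main}, upgrading the horograding action by means of the measure-theoretic argument already used in the proof of Corollary \ref{c-metanilpotent-tree}. First I would apply Theorem \ref{t-main} to the minimal action $\varphi$. This immediately yields a dichotomy: either $\varphi$ is conjugate to an affine action, in which case there is nothing further to prove, or $\varphi$ is laminar and can be horograded by an action $j\colon G\to\homeo_0(\R)$ which factors through $G/\Fit(G)$ and is either minimal or cyclic. It then remains only to promote this $j$ to an action by translations.

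The key observation is that $G/\Fit(G)$ is virtually nilpotent. Indeed, since $G$ is virtually metanilpotent, it contains a finite-index subgroup $G_0$ with a nilpotent normal subgroup $N\unlhd G_0$ such that $G_0/N$ is nilpotent; by Lemma \ref{l-Fitting} we have $N\le\Fit(G)$, so the image of $G_0$ in $G/\Fit(G)$ is a finite-index nilpotent subgroup, whence $G/\Fit(G)$ is virtually nilpotent. I would then dispose of the two possibilities for $j$ separately. If $j$ is cyclic, it already takes values in the group $(\Z,+)$ of integer translations, so it is in particular an action by translations and we are done. If $j$ is minimal, then it is a minimal action of the virtually nilpotent (and finitely generated) group $G/\Fit(G)$; as recalled in the proof of Corollary \ref{c-metanilpotent-tree}, Plante's theorem (with Beklaryan's extension as backup) guarantees that $j$ preserves a Radon measure, which by minimality must be atomless and of full support, and hence can be pushed to the Lebesgue measure by a homeomorphism $h$. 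This conjugates $j$ to an action by translations $j'\colon G\to(\R,+)$.

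Finally I would record the elementary fact that horogradings are stable under conjugating the horograding action: if $(\mathcal{L},\hor)$ horogrades $\varphi$ by $j$ and $j'=hjh^{-1}$ for an orientation-preserving homeomorphism $h$, then $(\mathcal{L},h\circ\hor)$ horogrades $\varphi$ by $j'$, since $h$ is increasing and $h(\hor(\varphi(g)(I)))=h(j(g)(\hor(I)))=j'(g)(h(\hor(I)))$ for every $g\in G$ and $I\in\mathcal{L}$. Thus in the laminar case $\varphi$ is horograded by the action by translations $j'$, which completes the argument.

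I do not expect a genuine obstacle here, as the statement is essentially a specialization of Theorem \ref{t-main} to the metanilpotent setting; the only points requiring a modicum of care are the verification that $G/\Fit(G)$ is virtually nilpotent (so that Plante's theorem applies) and the stability of the horograding under conjugacy of $j$. The substantive work is entirely contained in Theorem \ref{t-main} and in the classical fact that minimal actions of virtually nilpotent groups preserve a Radon measure.
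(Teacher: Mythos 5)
Your proof is correct and follows essentially the same route as the paper: Corollary \ref{c-metanilpotent-dichothomy} is derived from Theorem \ref{t-main} by observing (via Lemma \ref{l-Fitting}) that $G/\Fit(G)$ is virtually nilpotent and then invoking Plante's invariant Radon measure to conjugate the minimal horograding action to translations, exactly as in the proof of Corollary \ref{c-metanilpotent-tree}. Your explicit remarks on the cyclic case and on the stability of horogradings under conjugation of $j$ are correct and fill in details the paper leaves implicit.
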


\subsection{Perturbations of affine actions}
Here we prove Theorem \ref{mthm.affine_rigid}. It will follow from the results above and from some general results on perturbations of affine actions of a finitely generated group.

\begin{prop}\label{p.translations_away}
	Let $G$ be a finitely generated group. Then for any irreducible action $\varphi_0\colon G\to \homeo_0(\R)$ which is semi-conjugate to a non-abelian affine action, there exists an open neighborhood of $\varphi_0$ in $\Homirr(G,\homeo_0(\R))$ which contains no action in the semi-conjugacy class of any action by translations.
\end{prop}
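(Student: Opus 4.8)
The plan is to isolate the most elementary feature distinguishing non-abelian affine actions from actions by translations: a non-abelian affine action contains \emph{homotheties}, i.e.\ elements whose linear part is $\neq 1$, and the image of a homothety has a fixed point detected by a \emph{robust sign change} of its displacement; by contrast, in any action semi-conjugate to a nontrivial action by translations, a nonzero translation number forces the corresponding element to be fixed-point-free. The neighborhood will be cut out by finitely many such sign-change conditions, indexed by homothety elements chosen so that no translation number can vanish on all of them at once.

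First I fix a non-abelian affine action $\psi_0\colon G\to\Aff(\R)$ in the semi-conjugacy class of $\varphi_0$, together with a monotone semi-conjugacy $h\colon\R\to\R$ satisfying $h\circ\varphi_0(g)=\psi_0(g)\circ h$; since a non-abelian affine action is minimal, $h$ has dense image. If $a\in G$ is a homothety for $\psi_0$, then $\psi_0(a)$ has a unique fixed point $y^{\ast}$ about which its displacement changes sign. Using the density of the image of $h$ to pick points with $h$-values on both sides of $y^\ast$ and transporting them through the equivariance of $h$, I obtain $p_a<q_a$ for which $\varphi_0(a)(p_a)-p_a$ and $\varphi_0(a)(q_a)-q_a$ have opposite signs (this is exactly the statement that $\varphi_0(a)$ is a pseudo-homothety). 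Then
\[
\mathcal U_a=\bigl\{\varphi\in\Homirr(G,\homeo_0(\R)) : \bigl(\varphi(a)(p_a)-p_a\bigr)\bigl(\varphi(a)(q_a)-q_a\bigr)<0\bigr\}
\]
is an open neighborhood of $\varphi_0$, because evaluating the fixed word $a$ at fixed points depends continuously on $\varphi$; and by the intermediate value theorem every $\varphi\in\mathcal U_a$ has a fixed point for $\varphi(a)$ inside $(p_a,q_a)$.

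Next I record the complementary fact. If $\varphi$ is semi-conjugate to a nontrivial action by translations $\psi$, via a monotone $k$ with $k\circ\varphi(g)=\psi(g)\circ k$, then $g\mapsto(\text{translation amount of }\psi(g))$ is a homomorphism $t_\psi\colon G\to(\R,+)$, which is nonzero since $\psi$ is irreducible. Whenever $t_\psi(a)\neq 0$, the relation $k(\varphi(a)(x))=k(x)+t_\psi(a)$ shows that $\varphi(a)$ has no fixed point; in particular such a $\varphi$ cannot lie in $\mathcal U_a$. Thus, to finish, it suffices to choose finitely many homothety elements $a_1,\dots,a_k$ with the property that \emph{no} nonzero homomorphism $G\to(\R,+)$ vanishes on all of them, and set $\mathcal U=\bigcap_i\mathcal U_{a_i}$: any translation-semi-conjugate $\varphi\in\mathcal U$ would have $t_\psi\neq0$, hence $t_\psi(a_i)\neq0$ for some $i$, contradicting that $\varphi(a_i)$ has a fixed point.

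The remaining point, which I expect to be the only place needing a short argument, is this homological selection. Every homomorphism $G\to\R$ factors through $H_1(G;\R)$, which is finite-dimensional as $G$ is finitely generated; the logarithm of the linear part of $\psi_0$ is a nonzero such homomorphism, corresponding to a nonzero functional $\bar\ell\in H_1(G;\R)^{\ast}$, and the homothety elements are precisely those whose image in $H_1(G;\R)$ lies off the hyperplane $\ker\bar\ell$. I claim these images span $H_1(G;\R)$: picking one homothety element with class $w$ (so $\bar\ell(w)\neq0$), any class $v$ in the image of $G$ is either itself off the hyperplane, or else $v=(v+w)-w$ with both $v+w$ and $w$ off the hyperplane (since $\bar\ell(v+w)=\bar\ell(w)\neq0$); as the image of $G$ spans $H_1(G;\R)$, so do the homothety classes. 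Choosing $a_1,\dots,a_k$ to be homothety elements whose classes span $H_1(G;\R)$ guarantees that $t_\psi$, viewed as an element of $H_1(G;\R)^{\ast}$, is nonzero on some $a_i$ whenever $t_\psi\neq0$, which is exactly what the argument requires.
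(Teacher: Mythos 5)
Your proof is correct. The core mechanism is the same as the paper's: a pseudo-homothety is certified by an open sign-change condition on the displacement at two fixed test points, the intermediate value theorem then produces a fixed point for every nearby action, and a fixed point for $\varphi(a)$ forces any translation-number homomorphism $t_\psi$ to vanish on $a$. Where you diverge is in how you guarantee that $t_\psi$ cannot vanish on \emph{all} the tested elements. The paper uses a single generator $s$ whose image is a pseudo-homothety, together with one extra ``domination'' inequality $\psi(s^n)(\xi_+)>\max_{t\in S}\psi(t)(\xi_+)$ over a symmetric generating set $S$: once $\overline{\psi}(s)=0$ is forced by the sign change, any generator with positive translation number would overtake $s^n$ at $\xi_+$, contradicting the domination condition. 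You instead test several homothety elements $a_1,\dots,a_k$ and choose them by a linear-algebra argument: since every homomorphism $G\to(\R,+)$ factors through the finite-dimensional space $H_1(G;\R)$, and since (as you correctly verify via the $v=(v+w)-w$ trick) the classes of homothety elements span $H_1(G;\R)$, a nonzero $t_\psi$ must be nonzero on some $a_i$. Your route is more modular and makes the obstruction conceptually transparent (each $\mathcal U_{a_i}$ is a fixed-point certificate, and the selection is pure homological algebra), at the cost of needing up to $\dim H_1(G;\R)$ test elements; the paper's route is more economical, extracting everything from one generator and one auxiliary inequality. One cosmetic point: you should note that the monotone semi-conjugacies may a priori be non-increasing, but this changes nothing since the sign-change product and the non-vanishing of $k(\varphi(a)(x))-k(x)$ are insensitive to the orientation of the conjugating map.
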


\begin{proof}
	Let $S$ be a finite symmetric generating set of $G$ containing the identity. As $\varphi_0$ is semi-conjugate to a non-abelian affine action $\overline{\varphi}_0\colon G\to \Aff(\R)$, we can choose $s\in S$ such that $\overline{\varphi_0}(s)$ is an expanding homothety. Thus $\varphi_0(s)$ is an expanding pseudo-homothety, and we let $K$ be its compact set of fixed points. Choose $\xi_-< \min K$ and $\xi_+>\max K$. Since $\varphi_0(s^n)(\xi_+)$ tends to $+\infty$, we can choose $n>0$ such that $\varphi_0(s^n)(\xi_+)> \max_{t\in S} \varphi_0(t)(\xi_+)$. Let $\mathcal{U}$ be the set of actions $\psi\in \Homirr(G,\homeo_0(\R))$ satisfying the open conditions
	\[\psi(s^n)(\xi_+)>\max_{t\in S} \psi(t)(\xi_+),\quad \psi(s)(\xi_+)>\xi_+,\quad \psi(s)(\xi_-)<\xi_-.\]
	Then $\mathcal{U}$ is an open neighborhood of $\varphi_0$ and we claim that it satisfies the desired conclusion. Indeed, assume by contradiction that $\psi\in \mathcal{U}$ is positively semi-conjugate to an action  by translations given by a non-trivial homomorphism $\overline{\psi}\colon G\to (\R, +)$. Since $\psi(s)(\xi_-)<\xi_-$ and $\psi(s)(\xi_+)>\xi_+$, we must have $\overline{\psi}(s)=0$. On the other hand, since $S$ is symmetric and $\overline\psi$ non-trivial, there exists $t\in S$ such that $\overline{\psi}(t)>0$. In particular, for such a $t$ we must have $\psi(t)(\xi_+)>\psi(s^n)(\xi_+)$ for every $n\in \Z$, which contradicts the assumption $\psi\in \mathcal{U}$.
\end{proof}

\begin{lem}\label{lem.domination}
	Let $G$ be a finitely generated group, and fix a finite symmetric generating subset $S\subset G$. Let $\varphi\colon G\to\homeo_0(\R)$ be a minimal laminar action, positively horograded by an action by translations $j\colon G\to (\R,+)$. Then, for every element $g\in \ker j$ and $p\in\R$, there exists $h\in S\cup \{sg^\epsilon s^{-1}:s\in S, \epsilon\in \{\pm 1\}\}$ such that \[\varphi(h^{-1})(p)<\varphi(g^n)(p)<\varphi(h)(p)\]
	for every $n\in\Z$. 
\end{lem}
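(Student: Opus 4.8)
The plan is to first reduce the statement to a boundedness-plus-overshoot problem, and then to exhibit the dominating element as a conjugate of $g$ by a single generator. Since $g\in\ker j$, the fixed set of $j(g)=\mathrm{id}$ is all of $\R$, and in particular accumulates at $+\infty$; so Proposition~\ref{p-classification-elements} tells us that $\varphi(g)$ is totally bounded, i.e.\ $\Fix(\varphi(g))$ accumulates at both ends of $\R$. Fixing $p$, if $p\in\Fix(\varphi(g))$ the orbit is the single point $p$, and since $\varphi$ is irreducible with $S$ symmetric we may pick $s\in S$ with $\varphi(s)(p)>p$, so that $h=s$ already satisfies $\varphi(s^{-1})(p)<p<\varphi(s)(p)$. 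Otherwise let $I=(\alpha,\beta)$ be the connected component of $\suppphi(g)$ containing $p$; total boundedness makes $\alpha,\beta$ finite fixed points of $\varphi(g)$, the orbit $\{\varphi(g^n)(p)\}$ is confined to $I$, and $\beta=\sup_n\varphi(g^n)(p)$, $\alpha=\inf_n\varphi(g^n)(p)$. Thus it suffices to find $h$ in the prescribed set with $\varphi(h)(p)\ge\beta$ and $\varphi(h^{-1})(p)\le\alpha$.

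To produce $h$, I would first extract an expanding generator. The horograding action $j$ is irreducible and by translations, hence non-trivial; as $S$ is symmetric and generating, some $s\in S$ has $j(s)\colon x\mapsto x+\tau_s$ with $\tau_s>0$, and then Proposition~\ref{p-classification-elements} shows $\varphi(s)$ is an expanding pseudo-homothety, with compact fixed set $K_s$. Guided by the model case of $\Z\wr\Z$ (where, with $g=h_0$ and $s$ the homothety generator, the conjugate $sgs^{-1}=h_1$ sends every point past the entire $h_0$-orbit, because $h_1$ lives at the next ``scale'' $I_1\supsetneq\overline{I_0}$), I would take $h=sgs^{-1}$; note that $h^{-1}=sg^{-1}s^{-1}$ lies in the same admissible family, and that $j(h)=0$, so $\varphi(h)$ is again totally bounded. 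The goal becomes the overshoot estimate $\varphi(sgs^{-1})(p)\ge\beta$ together with $\varphi(sg^{-1}s^{-1})(p)\le\alpha$.

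The overshoot estimate is the crux, and the step I expect to be the main obstacle. The mechanism I would use is the invariant covering lamination $\mathcal L$ underlying the horograding. Taking $L$ to be the smallest leaf containing $[\alpha,\beta]$ (which exists by closedness of $\mathcal L$ and non-crossing, the leaves through a common interior point of $I$ forming a chain), one checks that $\varphi(g)(L)=L$, since $\varphi(g)$ fixes $\alpha,\beta$ and hence $[\alpha,\beta]$ setwise; conjugating, $\varphi(sgs^{-1})$ preserves the leaf $\varphi(s)(L)$, whose horograding level is $\hor(L)+\tau_s>\hor(L)$. The plan is to leverage this strictly larger level, together with the expanding property of $\varphi(s)$, to certify that $\varphi(sgs^{-1})(p)$ has been pushed beyond the right endpoint of $L$, hence past $\beta$, and symmetrically for the lower bound. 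The delicate point — and where I expect the real work to lie — is the relative position of the compact set $K_s$ and the (possibly very small) interval $I$: the clean inclusion $\varphi(s)(L)\supsetneq\overline L$ requires $K_s\subset L$, which may fail for a leaf $L$ this small. I would resolve this either by enlarging the reference leaf to contain $K_s$ while tracking its effect on the orbit, or by exploiting the symmetry of $S$ to choose between an expanding and a contracting generator (and the sign $\epsilon$) according to the side on which $I$ sits relative to $K_s$, so that a single conjugate of $g^{\pm1}$ still straddles and overshoots the orbit. Reconciling the $\varphi(g)$-invariance of $L$ with the requirement $K_s\subset L$ is precisely the obstacle I anticipate.
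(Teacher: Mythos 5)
Your setup is sound up to the point you yourself flag as the obstacle, and that obstacle is a genuine gap: the proposal never establishes the overshoot estimate, and the mechanism you sketch for it (the expanding pseudo-homothety property of $\varphi(s)$ together with an inclusion $\varphi(s)(L)\supsetneq\overline{L}$, which would require $K_s\subset L$) is not how the difficulty gets resolved --- there is no evident way to force a $\varphi(g)$-invariant leaf to contain $K_s$. The paper's proof does not use the expanding property of $\varphi(s)$ at all. Instead it first arranges for $\mathcal L$ to be a \emph{minimal} invariant lamination (Proposition \ref{prop.minimalamination}) and sets $l_0=\min_{\subseteq}\{l\in\mathcal L: J\subseteq l\}$, where $J$ is the component of $\suppphi(g)$ containing $p$; then $g.l_0=l_0$. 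The whole problem is reduced to the purely combinatorial statement that some admissible $h$ satisfies $h.J\cap J=\emptyset$: once $h.J$ is disjoint from $J$ and, say, to its right, every $\varphi(g^n)(p)$ lies in $J$ while $\varphi(h)(p)$ lies in $h.J$, and the two-sided estimate follows. Disjointness is obtained by a trichotomy. If $s.l_0\cap l_0=\emptyset$, take $h=s$. Otherwise $l_0\subsetneq s.l_0$, and one examines $sgs^{-1}.l_0$: since $sgs^{-1}\in\ker j$, the leaves $sgs^{-1}.l_0$ and $l_0$ have the same horograding level, so if they were distinct and nested, Lemma \ref{l-horograding-almost-injective} (which crucially needs minimality of $\mathcal L$) would force them into distinct $\varphi$-orbits --- absurd, as they lie in the same orbit by construction. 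Hence either they are disjoint, and $h=sgs^{-1}$ works, or $sgs^{-1}.l_0=l_0$, in which case $gs^{-1}.l_0=s^{-1}.l_0\subsetneq l_0$ and a separate observation (a $\varphi(g)$-invariant leaf strictly contained in $l_0$ cannot meet $J$, by minimality of $l_0$ among leaves containing $J$) gives $s^{-1}.J\cap J=\emptyset$, so $h=s^{-1}$ works.

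So the missing ideas are: (i) passing to a minimal invariant lamination so that Lemma \ref{l-horograding-almost-injective} applies; (ii) replacing the metric ``overshoot'' estimate by the disjointness statement $h.J\cap J=\emptyset$; and (iii) the three-way case analysis, in particular the fallback $h=s^{-1}$ when $sgs^{-1}$ fixes $l_0$ --- your proposal commits to $h=sgs^{-1}$, which genuinely fails to dominate in that last case. Your reduction of the problem (total boundedness of $\varphi(g)$ via Proposition \ref{p-classification-elements}, the interval $I=(\alpha,\beta)$, the trivial case $p\in\Fix(\varphi(g))$, and the identification of $s$ and $sgs^{-1}$ as the natural candidates) matches the paper, but the core of the argument is absent.
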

\begin{proof}
	For sake of readability, we resume our notation $\varphi(g)(x)$ for $g.x$, and similar variations, when working with the action $\varphi$.
	Let $(\mathcal{L}, \hor)$ be a positive horograding of $\varphi$ by $j$; without loss of generality, since $G$ is finitely generated, we can assume that the lamination $\mathcal L$ is minimal $\varphi$-invariant (Proposition \ref{prop.minimalamination}). Take an element $g\in \ker j$ and note that by Proposition \ref{p-classification-elements}, $\varphi(g)$ is totally bounded. Fix a point $p\in\R$. If $g.p=p$, it is enough to choose $h\in S$ such that $h.p>p$, which exists since $\varphi$ has no global fixed points. Suppose now that $g.p\neq p$ and let $J$ be the connected component of $\suppphi(g)$ containing $p$. Define $l_0:=\min_{\subseteq}\{l\in\mathcal{L}:J\subseteq l\}$. The fact that $l_0$ is well defined follows from the fact that leaves that contain $J$ are totally ordered and that $\mathcal{L}$ is closed.  Notice that, by $\varphi(g)$-invariance of ${J}$ and the definition of $l_0$, we must have $g.l_0=l_0$. 
	
\begin{claimnum}\label{sublem.fix}
	If $k\in \mathcal L$ is such that $k\subsetneq l_0$ and $g.k=k$, then the intersection $k\cap J$ is empty.
\end{claimnum}
\begin{proof}[Proof of claim]
	Let assume by contradiction that $k\cap J\neq \varnothing$. Note that, after the choice of $l_0$, $k$ cannot contain $J$. Therefore, $k$ must have at least one endpoint in $J$. Since $\varphi(g)$ moves every point in $J$, this implies that $g.k\neq k$.
\end{proof}
	
\begin{claimnum}\label{sublem.related} Suppose that for some $h\in G$ one of the following holds:
\begin{enumerate}
\item\label{iA} either $l_0$ and $h.l_0$ are disjoint, or
\item\label{iB} $h.l_0\subsetneq l_0$ and $gh.l_0=h.l_0$.
\end{enumerate}
Then, up to replacing $h$ by its inverse, it holds that $h^{-1}.p<g^n.p<h.p$ for every $n\in\Z$. 
\end{claimnum}
\begin{proof}[Proof of claim]
	We first observe that both conditions  \eqref{iA} and \eqref{iB} imply that $h.J\cap J=\varnothing$. If \eqref{iA} holds, this is obvious since $J\subseteq l_0$. If \eqref{iB} holds, by Claim \ref{sublem.fix}, we have $h.l_0\cap {J}=\varnothing$ and the same conclusion follows, since $h.J\subseteq h.l_0$. Now, upon replacing $h$ by its inverse, we can assume that $h.J$ lies on the right of $J$  (and thus $h^{-1}.J$ lies on its left). Since $p\in J$ and its $\varphi(g)$-orbit is contained in $J$, the claim follows.
\end{proof}

Let $s\in S$ be a generator such that $j(s)$ is a positive translation. If $s.l_0$ and $l_0$ are disjoint, then we can conclude the proof using Claim \ref{sublem.related}, as condition \eqref{iA} is satisfied. Otherwise, since $j(s)$ is a positive translation, we must have $l_0\subsetneq s.l_0$.
Assume first that $sgs^{-1}.l_0\neq l_0$. In this case, we have \[\hor(sgs^{-1}.l_0)=j(sgs^{-1})(\hor(l_0))=j(g)(\hor(l_0))=\hor(l_0);\]
if $sgs^{-1}.l_0$ and $l_0$ are related by inclusion, Lemma \ref{l-horograding-almost-injective} gives that they are in distinct $\varphi$-orbits, an absurd (note that this is the place where we use that $\mathcal L$ is minimal). Thus, it must be that $sgs^{-1}.l_0$ and $l_0$ are disjoint. Thus, we conclude again using Claim \ref{sublem.related}, as condition \eqref{iA} is satisfied by $h=sgs^{-1}$. It remains to consider the case $sgs^{-1}.l_0=l_0$. For this, rewrite the equality as $gs^{-1}.l_0=s^{-1}.l_0$ and notice that $s^{-1}.l_0\subsetneq l_0$. In this case, we conclude using Claim \ref{sublem.related}, setting $h=s^{-1}$ in condition \eqref{iB}. 
\end{proof}

\begin{prop}\label{prop.focal_away}
Let $G$ be a finitely generated group. Denote by $\mathcal{X}\subset \Homirr(G, \homeo_0(\R))$ the subset of minimal laminar actions of $G$ that can  be horograded by an action by translations, and let $\varphi_0\colon G\to \homeo_0(\R)$ be an action which is semi-conjugate to a non-abelian affine action. Then $\varphi_0$ has a neighborhood $\mathcal{U}$ which contains no action in the semi-conjugacy class of any action in $\mathcal X$.
\end{prop}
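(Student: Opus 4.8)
The plan is to isolate a single group element whose iteration is forced to be \emph{expanding} in every action near $\varphi_0$, yet is necessarily \emph{dominated} (in the sense of Lemma \ref{lem.domination}) in every action of $\mathcal X$, and then to record the expanding behaviour as a finite list of strict inequalities. First I would fix a finite symmetric generating set $S\ni\id$ of $G$. By hypothesis $\varphi_0$ is semi-conjugate to a non-abelian affine action; composing with the reflection $x\mapsto -x$ if necessary (which sends affine actions to affine actions and preserves non-abelianity), I may assume $\varphi_0$ is \emph{positively} semi-conjugate to a non-abelian affine action $\overline{\varphi}_0\colon G\to\Aff(\R)$ via a non-decreasing map $h_0$. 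Since $\overline{\varphi}_0(G)$ is non-abelian and generated by $\overline{\varphi}_0(S)$, there are $s,t\in S$ whose images do not commute. I set $g$ to be $[s,t]$ or $[t,s]$, chosen so that $\overline{\varphi}_0(g)$ is a translation $x\mapsto x+c$ with $c>0$ (elements of the commutator subgroup of $\Aff(\R)$ are translations, and this one is nontrivial). The point of this choice is the following dichotomy: \emph{(i)} for any $p\in\R$ we have $\overline{\varphi}_0(g^n)(h_0(p))=h_0(p)+nc\to+\infty$, and since $h_0$ is non-decreasing this forces $\varphi_0(g^n)(p)\to+\infty$; while \emph{(ii)} since $g\in[G,G]$ and any horograding by translations is given by a homomorphism $j\colon G\to(\R,+)$, which kills $[G,G]$, we have $g\in\ker j$ for \emph{every} such $j$.

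Next I would build the neighborhood. Fix $p\in\R$ and let $\mathcal H:=S\cup\{s'g^{\epsilon}s'^{-1}:s'\in S,\ \epsilon\in\{\pm1\}\}$ be the finite set appearing in Lemma \ref{lem.domination}. Using \emph{(i)}, choose $n_0$ with $\varphi_0(g^{n_0})(p)>\max_{h\in\mathcal H}\varphi_0(h)(p)$, and set
\[\mathcal U:=\bigl\{\psi\in\Homirr(G,\homeo_0(\R)):\ \psi(g^{n_0})(p)>\psi(h)(p)\ \text{ for all }h\in\mathcal H\bigr\}.\]
As a finite intersection of open conditions (evaluation of a fixed word at $p$ is continuous in $\psi$), $\mathcal U$ is an open neighborhood of $\varphi_0$.

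It then remains to verify that $\mathcal U$ meets no semi-conjugacy class of $\mathcal X$. Suppose for contradiction that $\psi\in\mathcal U$ is semi-conjugate to some $\varphi\in\mathcal X$. After replacing $\varphi$ by its reflection (under which $\mathcal X$ is invariant: minimality, laminarity and horogradability by translations are all preserved), I may assume the semi-conjugacy is positive, i.e.\ there is a non-decreasing $m$ with $m\circ\psi(k)=\varphi(k)\circ m$ for all $k$; and I may assume $\varphi$ is positively horograded by a translation action $j$ (if the horograding is negative, replace $j$ by $-j$, still by translations). By \emph{(ii)}, $g\in\ker j$, so Lemma \ref{lem.domination} applied to $\varphi$ at the point $q:=m(p)$ produces some $h\in\mathcal H$ with $\varphi(g^n)(q)<\varphi(h)(q)$ for all $n$. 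Taking $n=n_0$ and using equivariance gives $m(\psi(g^{n_0})(p))=\varphi(g^{n_0})(q)<\varphi(h)(q)=m(\psi(h)(p))$; since $m$ is non-decreasing, a strict inequality between $m$-values forces the same strict inequality between arguments, so $\psi(g^{n_0})(p)<\psi(h)(p)$, contradicting $\psi\in\mathcal U$.

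The conceptual heart of the argument — the step I expect to require the most care in the write-up — is the dichotomy \emph{(i)}–\emph{(ii)}: one and the same commutator $g$ must translate off to $+\infty$ in anything close to $\varphi_0$, yet must be $j$-bounded in every member of $\mathcal X$ simply because it lies in the kernel of any abelian horograding. The only genuinely technical point is that the domination of Lemma \ref{lem.domination}, which lives on the minimal model $\varphi$, must survive pulling back along a semi-conjugacy $m$ that is only monotone and possibly far from injective; this is precisely why the whole comparison is phrased through \emph{strict} inequalities, which do pull back correctly through non-decreasing maps. Everything else (openness of $\mathcal U$, the reflection reductions to the positive case, and the computation of $\overline{\varphi}_0(g)$) is routine.
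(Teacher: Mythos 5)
Your proof is correct and follows essentially the same route as the paper's: the same element $g\in[G,G]$ acting as a nontrivial translation in the affine model, the same finite set $S\cup\{sg^{\epsilon}s^{-1}\}$ from Lemma \ref{lem.domination}, the same open condition comparing the $g$-orbit of a basepoint against that finite set, and the same pullback of strict inequalities through the monotone semi-conjugacy. The only cosmetic difference is that the paper uses the symmetric two-sided condition $\varphi_0(g^{-K})(p)<\varphi_0(h)(p)<\varphi_0(g^{K})(p)$ where you use a one-sided inequality together with explicit reflection reductions to the positively semi-conjugate and positively horograded cases.
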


\begin{proof}
Let $\varphi_0\colon G\to\homeo_0(\R)$ be an action which is semi-conjugate to a non-abelian affine action $\bar\varphi_0$, fix a point $p\in\R$, and  a finite symmetric generating subset $S\subset G$. Since $G$ is non-abelian, there exists $g\in[G,G]$ such that $\bar\varphi_0(g)$ acts as a translation, and thus $\varphi_0(g)$ acts without fixed points. Up to replace $g$ by its inverse, we can assume $\varphi_0(g)(p)>p$. Thus,  there exists $K\in\N$ such that for any $h$ in the finite subset $F:=S\cup \{sg^\epsilon s^{-1}:s\in S, \epsilon\in \{\pm 1\}\}$ we have \[\varphi_0(g^{-K})(p)<\varphi_0(h)(p)<\varphi_0(g^K)(p).\]
This is an open condition in $\Homirr(G, \homeo_0(\R))$, so we can find a neighborhood $\mathcal U$ of $\varphi_0$ such that the condition above is satisfied by any $\varphi\in \mathcal U$. Assume now that  $\varphi\colon G\to \homeo_0(\R)$ is an action which is positively semi-conjugate to an action   $\bar{\varphi}\in \mathcal{X}$, and let $\tau\colon \R \to \R$ be a non-decreasing  map realizing the semi-conjugacy, in the sense that  $\tau(\varphi(f)(x))=\bar{\varphi}(f)(\tau(x))$ for every $f\in G$ and $x\in \R$. Using Lemma \ref{lem.domination}, we find an element $h\in F$ such that $\bar{\varphi}(h^{-1})(\bar p)<\bar{\varphi}(g^n)(\bar p)<\bar{\varphi}(h)(\bar p)$ for every $n\in \Z$, where $\bar{p}=\tau(p)$. Therefore the analogue condition is satisfied for the action $\varphi$: $\varphi(h^{-1})(p)<\varphi(g^n)(p)<\varphi(h)(p)$ for every $n\in \Z$.
We deduce that $\varphi\notin \mathcal U$.
\end{proof}

\begin{proof}[Proof of Theorem \ref{mthm.affine_rigid}] 
Since $G$ is finitely generated, every $\varphi\in \Homirr(G, \homeo_0(\R))$ is semi-conjugate to an action which is either minimal or cyclic. After Corollary \ref{c-metanilpotent-dichothomy}, and by splitting further the affine actions into the non-abelian ones and actions by translations, we obtain that every $\varphi\in \Homirr(G, \homeo_0(\R))$ is semi-conjugate to either
\begin{enumerate}
\item \label{i-affine} a non-abelian affine action, or
\item \label{i-abelian} an action by translations, or
\item \label{i-focal} a minimal laminar action which can be horograded by an action by translations. 
\end{enumerate} 
By Propositions \ref{p.translations_away} and \ref{prop.focal_away}, every action satisfying \eqref{i-affine} has a neighborhood consisting of actions which satisfy neither \eqref{i-abelian} nor \eqref{i-focal}.  Thus the set of such actions is open.
\end{proof}

\subsection{A non-metanilpotent example}\label{ssc.ZwrZwrZ}
The goal of this subsection is to show that the conclusion of Theorem \ref{mthm.affine_rigid} fails for solvable groups which are not virtually metanilpotent. More precisely, we will construct a finitely generated 3-step solvable group $G$ and a sequence $(\varphi_n)\subset \Homirr(G, \homeo_0(\R))$ of minimal actions which are not semi-conjugate to any affine action, but converge to a limit $\varphi\in \Homirr(G, \homeo_0(\R))$ which is conjugate to a non-abelian affine action.

As a basis for our construction,  let $B\leq \Aff(\R)$, be a finitely generated non-abelian group of affine transformations. A precise choice of such an $B$ will be irrelevant; for definiteness the reader may take $B$ to be the group generated by the two transformations 
\[ x\mapsto a x,   \quad  x\mapsto x+1  \quad (a>0,a\neq 1).\]
We denote by $j \colon B\to \homeo_0(\R)$ the action given by the standard inclusion.

Let $G$ be the wreath product $G=\Z \wr B= \bigoplus_B\Z\rtimes B$, as defined in \S\ref{s-wreath}. Extending the notation from \S \ref{ssc.plante}, we will simply write $L$ for the direct sum $\bigoplus_B\Z$, and identify it with the group of finitely supported configurations $f\colon B\to \Z$. The semi-direct product is taken with respect to the left-regular action $\lambda \colon B\to \Aut(L)$, given by $\lambda_b(f)(k)=f(b^{-1}k)$. For later use, we also denote by $\rho\colon B\to \Aut( L)$ the right-regular action, given by $\rho_b(f)(k)=f(kb)$. Note that these two actions commute.

We denote by $\pi_B\colon G\to B$ the projection to the quotient and by $(\delta_b)_{b\in B}$ the standard basis of $L$, where $\delta_b\in L $ is the configuration taking the value 1 on $b$ and $0$ elsewhere. The identically zero configuration of $L$ is denoted as $\bar{0}$. Finally for $f\in L$ we write $\supp (f)=\{b\in B : f(b)\neq 0\}$.

Chose a point $\xi_0\in \R$ which has a free orbit for the natural affine action $j$ of $B$, and let $<_B$ be the associated left-order on $B$, given by $b_1<_Bb_2$ if $j(b_1)(\xi_0)<j(b_2)(\xi_0)$. We can use this order  to construct an action of $G$ on $\R$ in a way which is analogous to the Plante-like action from \S \ref{ssc.plante} (see the discussion in \S \ref{s-wreath}, and \cite[Example 8.1.8]{BMRT} for more details). Namely, consider the order $\prec$ on $L$ given by $f_1\prec f_2$ if $f_1(b_*)<f_2(b_*)$, where $b_*=\max_{<_B} \{b : f_1(h)\neq f_2(b)\}$. Let $\alpha \colon G\to \Aut( L, \prec)$ be the ``affine'' action obtained by letting $B$ act via the left-regular action and $L$ act on itself by translations. Explicitly, for $g=(r, b)\in L\rtimes B=G$ and  $ f\in L$, we have
\[\alpha(g)(f)=r+\lambda_b(f).\]
It is straightforward to check that this action preserves the order $\prec$ constructed above. By taking the dynamical realization of the action $\alpha \colon G\to \Aut (L, \prec)$, we obtain an action 
$\psi\colon G\to \homeo_0(\R)$. This action is minimal and laminar (and is horograded by the action $j\circ \pi_B\colon G\to \homeo_0(\R)$), we refer to   \cite[Example 8.1.8]{BMRT} for a proof.

We now perturb the $\psi$-action of $G=\Z\wr B$, to build our desired counterexample. Let $\iota \colon L\to \R$ be the associated equivariant order-preserving map from the dynamical realization $\psi$. We suppose that $\iota(\overline{0})=0$. Then $0$ is a global fixed point for $\psi(B)$, and it is moved by all elements of $\psi(L)$. Denote by $I_b$ the connected component of the support of $\psi(\delta_b)$ containing 0. Then each $I_{b}$ is a bounded open interval and we have $I_{b_1}\subset I_{b_2}$ if $b_1<_B b_2$.  Moreover since $0$ is fixed by $\psi(B)$  and $b\delta_cb^{-1}=\delta_{bc}$, we have $\psi(b)(I_c)=I_{bc}$. In particular if we denote by $p_b$ the rightmost point of $I_b$, the map $b\mapsto p_b$ is an order-preserving, $B$-equivariant embedding of $(B,<_B)$ into $\R$.   Finally we observe that the point $p_b$ is fixed by $\psi(\delta_c)$ for every $c<_B b$. Thus if $f\in L$ is such that $\max_{<_B} \supp (f)<_B b$, then $\psi(f)(p_b)=(p_b)$. 

Let now $t\in B$ be any element such that $j(t)$ is a positive translation. Let $\tau\colon G\to G$ be the automorphism
\[\tau(f, b)=(\rho_t(f), b), \quad (f, b)\in L \rtimes B.\]
The fact that $\tau$ is an automorphism follows from the fact that $\rho_t$ commutes with the left-regular action of $B$. For every $n\in \Z$ set $\psi_n=\psi \circ \tau^n$. Note that, in the same way as $\psi$, every $\psi_n$ is minimal and laminar. We also point out to the reader that $\tau$ is an exterior automorphism of $G$, and the actions $\psi_n$ are all pairwise non-conjugate. We will prove the following.

\begin{prop}\label{prop ejemplo no acumulado}
With notation as above, there exists a sequence of homeomorphisms $(s_n)\subset \homeo_0(\R)$ such that the sequence of conjugate actions $ \varphi_n(g)=s_n\psi_n(g)s_n^{-1}$ has a subsequence which converges to a limit $\varphi\in \Homirr(G, \homeo_0(\R))$, which is positively conjugate to the non-abelian affine action $j\circ \pi_B$. 
\end{prop}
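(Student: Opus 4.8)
The plan is to produce the conjugating homeomorphisms $s_n$ by matching larger and larger finite pieces of a reference orbit of $\psi_n$ to the corresponding orbit of the affine action $j\circ\pi_B$, and to show that the laminar skeleton of $\psi_n$ ``escapes to infinity'' as $n\to\infty$, so that on every bounded piece the dynamics become affine in the limit. First I would record two elementary facts about $\psi_n=\psi\circ\tau^n$. Since $\tau$ fixes the $B$-coordinate, $\psi_n|_B=\psi|_B$ for all $n$; and since $\rho_{t^n}(\delta_e)=\delta_{t^{-n}}$ (here $e$ is the identity of $B$), the distinguished generator acts by $\psi_n(\delta_e)=\psi(\delta_{t^{-n}})$. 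As $G$ is generated by $B$ together with $\delta_e$, which normally generates $L$, it suffices to control these elements. I take $p_e$ as a reference point for every $\psi_n$ and $\xi_0$ as a reference point for $j\circ\pi_B$, together with the order-preserving correspondence $p_c\leftrightarrow j(c)(\xi_0)$, both sides being ordered by $<_B$.

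The key dynamical step is the following escape property. Fix $g\in G$ and let $\ell$ be its word length in $B\cup\{\delta_e\}$. When $\psi_n(g)(p_e)$ is evaluated by applying the generators one at a time, every intermediate index lies in the finite ball $B_\ell=\{c\in B:|c|_B\le\ell\}$, while $t^{-n}\to-\infty$ in $(B,<_B)$ because $j(t^{-n})(\xi_0)=\xi_0-n\theta\to-\infty$, where $j(t)\colon x\mapsto x+\theta$ with $\theta>0$. Hence for all $n$ large enough (depending on $\ell$) one has $t^{-n}<_B c$ for every $c\in B_\ell$, and the recorded property that $\psi(\delta_{t^{-n}})$ fixes $p_c$ whenever $t^{-n}<_B c$ shows that every application of $\delta_e^{\pm1}$ acts trivially along the orbit. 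Consequently the orbit remains inside $\{p_c:c\in B_\ell\}\subset P$, the element $\delta_e$ behaves like the identity, and $B$ acts on the indices by left multiplication; therefore $\psi_n(g)(p_e)=p_{\pi_B(g)}$ for all large $n$, which corresponds under $p_c\leftrightarrow j(c)(\xi_0)$ to exactly $j\circ\pi_B(g)(\xi_0)$.

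With this in hand I would define $s_n$ to be any orientation-preserving homeomorphism of $\R$ sending $p_c$ to $j(c)(\xi_0)$ for every $c$ in a ball $B_{r(n)}$ with $r(n)\to\infty$ (such an $s_n$ exists because a finite order isomorphism extends to a homeomorphism), normalized so that $s_n(p_e)=\xi_0$. Setting $\varphi_n=s_n\psi_n(\cdot)s_n^{-1}$, the escape property gives $\varphi_n(g)(j(c)(\xi_0))=j\circ\pi_B(g)(j(c)(\xi_0))$ for all $c,g$ of bounded complexity once $n$ is large. Thus $\varphi_n(g)$ agrees with the fixed homeomorphism $j\circ\pi_B(g)$ on larger and larger finite subsets of the dense orbit $B\xi_0$; since both maps are monotone and $j\circ\pi_B(g)$ is continuous, a standard squeezing argument forces $\varphi_n(g)\to j\circ\pi_B(g)$ uniformly on compact sets. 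The limit therefore equals, in particular is positively conjugate to, the non-abelian affine action $j\circ\pi_B$, which is irreducible because $B$ is non-abelian; passing to a subsequence if needed to secure the positional normalization gives the statement.

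I expect the main obstacle to be the escape step itself, namely proving rigorously that the laminar structure of $\psi_n$ recedes as $n\to\infty$. Conceptually this matches the heuristic of the $\Z\wr\Z$ footnote: after the change of variables $\rho_{t^{-n}}$, the action $\psi_n$ is the dynamical realization of the fixed ``affine'' action $\alpha$ of $G$ on $L$ with respect to the lexicographic orders $\prec_n$ attached to the basepoints $\xi_0+n\theta\to+\infty$, and these orders degenerate to the affine one, along which the subgroup $L$ collapses. The delicate points to verify carefully are that $t^{-n}$ genuinely leaves every finite ball of $(B,<_B)$ (which is where it matters that $t$ is a \emph{positive translation}, so that the slopes of the affine maps do not reverse the relevant inequalities) and that the index bookkeeping along a word is unaffected by the trivial $\delta_e$-steps; once these are settled, the upgrade from combinatorial convergence of orbits to $C^0$ convergence of the $\varphi_n$ is routine.
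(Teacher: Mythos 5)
Your proof is correct, and the core dynamical input is the same as the paper's: the observation that $\psi_n(\delta_e)=\psi(\delta_{t^{-n}})$ with $t^{-n}\to-\infty$ in $(B,<_B)$, so that on any fixed finite ball of indices the lamp generators eventually act trivially and $\psi_n(g)(p_c)=p_{\pi_B(g)c}$ for $n$ large. Where you genuinely diverge from the paper is in how the conjugators $s_n$ are produced and how the limit is identified. The paper takes the soft route: it places each $\psi_n$ inside the compact space $\mathscr{H}$ of normalized harmonic actions (Deroin--Kleptsyn--Navas--Parwani), extracts a convergent subsequence by compactness, and then identifies the limit only through the combinatorics of the orbit of $p_0$, invoking Lemma \ref{l-dynamical-realisation} to upgrade the resulting equivariant monotone map $(B,<_B)\to\R$ to a conjugacy with $j\circ\pi_B$. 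You instead build $s_n$ explicitly as an extension of the finite order isomorphism $p_c\mapsto j(c)(\xi_0)$ on a growing ball (legitimate, since $c\mapsto p_c$ and $c\mapsto j(c)(\xi_0)$ are both order-preserving in $<_B$), and prove convergence directly by interpolation between points of the dense orbit $B\xi_0$, using monotonicity of the $\varphi_n(g)$ and continuity of $j(\pi_B(g))$; applying the same argument to $g^{-1}$ handles the inverses, as required for compact-open convergence in $\homeo_0(\R)$. Your version is more elementary and self-contained (no harmonic-action machinery), and it yields a stronger conclusion: the whole sequence converges, and the limit is $j\circ\pi_B$ itself rather than merely an action conjugate to it. The price is the extra bookkeeping in constructing $s_n$ and the squeezing argument, both of which you have set up correctly; the only point I would insist you spell out is that for a fixed $g=(f,b)$ and a fixed $c$ you need $n$ large enough both for the escape property (so that $\max_{<_B}\supp(f)t^{-n}<_Bbc$) \emph{and} for $bc$ to lie in the ball $B_{r(n)}$ on which $s_n$ has been prescribed, but since both conditions are eventually satisfied for each fixed pair $(g,c)$ this is harmless.
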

\begin{proof}
Before discussing the  details of the proof we recall here a fundamental consequence of the work of Deroin, Kleptsyn, Navas, and Parwani \cite{DKNP} on random walk on subgroups of $\homeo_0(\R)$. As explained in \cite[\S 14]{BMRT}, it follows from their work that for every finitely generated group $G$, one can find a subset $\mathscr{H}\subset \Homirr(G,\homeo_0(\R))$ having the following properties:
\begin{enumerate}
	\item $\mathscr{H}$ is compact,
	\item $\mathscr{H}$ contains a representative of any positive semi-conjugacy class of irreducible action of $G$, and moreover this representative is either minimal or cyclic,
	\item $\mathscr{H}$ is closed under conjugation by translations, and conversely any two representatives of the same positive semi-conjugacy class are conjugate by a translation.\footnote{More precisely, one can choose $\mathscr{H}$ to be the space of so-called \emph{normalized $\mu$-harmonic actions}, associated with the choice of a symmetric finitely supported probability measure $\mu$ on $G$. However, the precise nature of $\mathscr{H}$ will be irrelevant for our purposes, and we will use only the three properties listed above. Note that these properties characterize $\mathscr{H}$ up to homeomorphism, see \cite[\S 3]{BMRT-Deroin}.}
\end{enumerate}

Write $p_0:=p_{\id_B}$. As $\psi_n$ is minimal for any $n\in \N$, by the last two properties of the  space $\mathscr{H}$ we can find a sequence $(s_n)$ of homeomorphisms such that $s_n(p_{0})=p_{0}$, and such that $\varphi_n:={}^{s_n}\psi_n\in \mathscr{H}$. As $\mathscr{H}$ is compact, upon extracting a subsequence we can suppose that $\varphi_n$ converges to a limit $\varphi\in \mathscr{H}$. In particular, $\varphi$ is either minimal or cyclic.  We claim that $\varphi$ is positively conjugate to $j\circ \pi_B$. To see this,  fix $g=(f, b) \in G$, with $b=\pi_B(g)$ and compute
\begin{align*}
	\varphi_n(g)(p_0)&=s_n  \psi_n(g)  s_n^{-1} (p_0)=s_n\psi_n(g)(p_0)\\&=s_n\psi(\rho_{t^n}(f), b)(p_0)=s_n\psi(\rho_{t^n}(f)) (p_b).\end{align*}
Here we have used that the map $b\mapsto p_b$ is $B$-equivariant, as observed above. Now note that $\supp(\rho_{t^n}(f))=\supp(f)t^{-n}$. By the choice of $t$, the element $b_n:=\max_{<_B}\supp(f)t^{-n}$ tends to $-\infty$ in $(B, <_B)$. In particular if $n$ is large enough we have $b_n<_Bb$. As observed earlier, this implies that $\psi(\rho_{t^n} (f))(p_b)=p_b$. Thus, for $n$ large enough we have
\[\varphi_n(g)(p_0)=s_n(p_b)=s_n(p_{\pi_B(g)}).\]
Now, fix $g_1, g_2\in G$. If $n$ is large enough, the previous computation holds for both $g_1$ and $g_2$. Therefore the following conditions are equivalent for sufficiently large $n$:
\begin{enumerate}
	\item $\varphi_n(g_1)(p_0)\le \varphi_n(g_2)(p_0)$,
	\item $s_n(p_{\pi_B(g_1)})\le s_n(p_{\pi_B(g_2)})$,
	\item $\pi_B(g_1) \le _B \pi_B(g_2).$
\end{enumerate}
For the equivalence between the last two conditions, we use that $s_n$ is a homeomorphism and that the map $b\mapsto p_b$ is order preserving.
Taking the limit, we get that if $\pi_B(g_1) \le_B \pi_B(g_2)$, then $\varphi(g_1)(p_0)\le \varphi(g_2)(p_0)$.
We deduce that the map $\sigma\colon (B, <_B)\to (\R, <)$ given by {$\sigma(b)=\varphi((\overline{0}, b))(p_0)$} is non-decreasing and  $G$-equivariant, where $G$ acts on   $(B, <_B)$ by translations via  $\pi_B$, and on $\R$ via $\varphi$. Note that the dynamical realization of the action on $(B, <_B)$ is precisely $j\circ \pi_B$, and thus it is minimal. By Lemma \ref{l-dynamical-realisation}, we deduce that $\sigma$ is injective. In particular $\varphi$ cannot be cyclic, since the ordered space $(B, <_B)$ is not isomorphic to  $(\Z, <)$. Thus $\varphi$ is minimal, and  Lemma \ref{l-dynamical-realisation}  again implies that it is conjugate to $j\circ \pi_B$. 
\end{proof}

\bibliography{biblio.bib}

\noindent\textit{Joaqu\'in Brum\\
	IMERL, Facultad de Ingenier\'ia, Universidad de la República, Uruguay\\
	Julio Herrera y Reissig 565, Montevideo, Uruguay\\}
\href{mailto:joaquinbrum@fing.edu.uy}{joaquinbrum@fing.edu.uy}

\smallskip

\noindent\textit{Nicol\'as Matte Bon\\
	CNRS \&
	Institut Camille Jordan (ICJ, UMR CNRS 5208)\\
	Universit\'e de Lyon\\
	43 blvd.\ du 11 novembre 1918,	69622 Villeurbanne,	France\\}
\href{mailto:mattebon@math.univ-lyon1.fr}{mattebon@math.univ-lyon1.fr}

\smallskip

\noindent\textit{Crist\'obal Rivas\\
Departamento de Matem\'aticas\\
Universidad de Chile\\
Las Palmeras 3425, Ñuñoa, Santiago, Chile\\}
\href{mailto:cristobalrivas@u.uchile.cl}{cristobalrivas@u.uchile.cl}

\smallskip

\noindent\textit{Michele Triestino\\
	Institut de Math\'ematiques de Bourgogne (IMB, UMR CNRS 5584)  \& Institut Universitaire de France\\
	Universit\'e de Bourgogne\\
	9 av.~Alain Savary, 21000 Dijon, France\\}
\href{mailto:michele.triestino@u-bourgogne.fr}{michele.triestino@u-bourgogne.fr}

\end{document}